\documentclass[12pt, twoside, leqno]{article}

\usepackage[hidelinks]{hyperref}

\usepackage[english]{babel}
\usepackage[T1]{fontenc}
\usepackage[utf8]{inputenc}
\usepackage[top=2.5cm,bottom=2cm,right=1.4cm,left=1.4cm]{geometry}
\usepackage{amsthm}
\usepackage{amsmath}
\usepackage{amsrefs}
\usepackage{amssymb}
\usepackage{mathtools}
\usepackage{sectsty}
\usepackage{mathrsfs}
\usepackage{hyperref}
\hypersetup{
    colorlinks=true,
    linkcolor=blue,
    filecolor=magenta,
    urlcolor=cyan,
    pdftitle={},
    pdfpagemode=FullScreen,
}
\usepackage{fancyhdr}
\usepackage{titlesec}
\titleformat{\section}{\normalfont\Large\bfseries}{\thesection}{1em}{\raggedright}

\newcommand{\R}{\mathbb{R}}
\newcommand{\C}{\mathbb{C}}
\newcommand{\N}{\mathbb{N}}
\newcommand{\Z}{\mathbb{Z}}

\renewcommand{\d}{\mathrm{d}}
\newcommand{\dt}{\, \mathrm{d}t}
\newcommand{\dx}{\, \mathrm{d}x}
\newcommand{\ds}{\, \mathrm{d}s}

\newcommand{\du}{\, \mathrm{d}u}

\newcommand{\dz}{\, \mathrm{d}z}

\newcommand{\sums}[1]{\underset{\hspace{-0.4em}#1}{\left.\sum \right.^{\ast}}}

\newcommand{\sumd}[1]{\underset{\hspace{-0.7em}#1}{\left.\sum \right.^{\#}}}

\newcommand{\eps}{\varepsilon}
\DeclarePairedDelimiter\abs{\lvert}{\rvert}
\renewcommand{\phi}{\varphi}
\renewcommand{\leq}{\leqslant}
\renewcommand{\geq}{\geqslant}

\DeclareMathOperator{\GL}{GL}
\DeclareMathOperator{\SL}{SL}
\DeclareMathOperator{\Res}{Res}
\newtheorem{thm}{Theorem}[section]

\newtheorem{lemme}{Lemma}[section]
\newtheorem{proposition}{Proposition}[section]
\theoremstyle{definition}

\newtheorem{rmq}{Remark}[section]

\newenvironment{preuve}
  {\noindent \textbf{Proof.}\\}
  {\begin{flushright} \qed\end{flushright}}

\numberwithin{equation}{section}

\sectionfont{\centering}

\title{Variance of $\GL(2)$ Fourier coefficients in arithmetic progressions}

\author{
Laurent Montaigu$^{1}$\\
\small
$^1$Univ. Bordeaux, CNRS, Bordeaux INP, IMB, UMR 5251,  F-33400, Talence, France
}
\date{}

\begin{document}
\maketitle
%\begin{center}
%		\vspace*{1cm}
%		\LARGE{\textbf{Variance of $\GL(2)$ Fourier coefficients in arithmetic progressions}}\\
%		\vspace*{0.5cm}
%		\normalsize
%		\textsc{Laurent Montaigu}
%\end{center}
\abstract{We improve a result of Lau and Zhao on the variance of Fourier 
coefficients of primitive cuspidal modular forms for $\SL_2(\Z)$ in 
arithmetic progressions. This is achieved by using bounds on the first 
moment of Rankin-Selberg $L$-functions in the height aspect and non-trivial estimates for shifted convolution sums.}

\renewcommand{\thefootnote}{}

\footnote{2020 \emph{Mathematics Subject Classification}: Primary 11N37; Secondary 11F03, 11F30, 11M99.}

\footnote{\emph{Key words and phrases}: Hecke eigenvalue,
Fourier coefficient,
Holomorphic cusp form,
Divisor function,
Variance,
Shifted convolution sums.
Moments of $L$-functions. 
}

\renewcommand{\thefootnote}{\arabic{footnote}}
\setcounter{footnote}{0}

\tableofcontents
\section{Introduction}
\subsection{Variances of arithmetic functions}
The study of the fine distribution of primes in arithmetic progressions has a long history a cornerstone result of which is the Bombieri-Vinogradov Theorem
\begin{equation*}
	\sum_{q\leq \frac{N^{\frac{1}{2}}}{\log(N)^B}}\max_{x\leq N}\max_{(a,q)=1}\abs*{\sum_{\substack{n\leq x\\ \hspace{0.22cm} n\equiv a[q]\phantom{-}}}\Lambda(n)-\frac{x}{\phi(q)}}\ll N\log(N)^{-A},
\end{equation*}
where $\phi$ is Euler's totient function, $\Lambda$ is the von Mangoldt function, $A>0$ is arbitrary, and one can choose $B=3A+23$, see \cite{Vinogradov} and \cite{Bombieri}. Barban-Davenport-Halberstam showed in \cite{Barban} and \cite{DavenportHalberstam} a similar inequality for the mean square of the error term in the prime number Theorem in arithmetic progressions
\begin{equation}\label{BDH}
	\sum_{q\leq \frac{x}{\log(x)^B}} \sum_{\substack{a\leq q\\ \hspace{0.22cm} (a,q)=1\phantom{-}}}\abs*{\sum_{\substack{n\leq x\\ \hspace{0.22cm} n\equiv a[q]\phantom{-}}}\Lambda(n)-\frac{x}{\phi(q)}}^2\ll x^2\log(x)^{-A},
\end{equation}
with $B=A+5$, as an admissible value. Gallagher, in \cite{Gallagher}, then improved the error term to $Qx\log(x)$ when the sum over $q$ is replaced by $\sum_{q\leq Q}$ and $Q\gg x\log(x)^{-A}$ for some $A>0$, see \cite{Davenport} for further details. Note that the range on $q$ is much larger. Montgomery and Hooley showed that this result is essentially best possible as the main term on the left-hand side of \eqref{BDH} is $Qx\log(x)$, see \cite{Hooley} and  \cite{Montgomery}.
 
Over the years results of this type have had countless applications going sometimes beyond bounds that are conditional on RH (see for example \cite{Nguyen} and the references therein). Let us mention asymptotics for the variance and higher moments have been used to prove Gaussian equidistribution results; see, for example, \cite{FGKM}*{Theorem 1.1} and \cite{RicottaKowalski}*{Corollary D}. In \cite{BF}*{Corollary 1.4}, results on moments are used to obtain a lower bound for the error term in the prime number Theorem in arithmetic progressions.

Analogous variances for other arithmetically meaningful sequences have also been studied. For example, in \cite{Motohashi}, Motohashi replaced $\Lambda(n)$ by $\tau(n)=\sum_{d\mid n}1$ the divisor function, and found an asymptotic formula for 
\begin{equation} \label{divisorAP}
	\sum_{q\leq Q}A(x,q;\tau)\qquad A(x,q;\tau)=\sum_{b=1}^q\abs*{\sum_{\substack{n\leq x\\ \hspace{0.22cm} n\equiv b[q]\phantom{-}}}\tau(n)-\mathrm{MT}(b,x,q)}^2,
\end{equation}
where $Q=x$,
\begin{equation}\label{TPbxq}
	\mathrm{MT}(b,x,q)=\frac{1}{q}\sum_{r\mid (q,b)}\frac{\phi\left(\frac{q}{r}\right)}{\frac{q}{r}}x\left(\log(x)+2\gamma-1-2\log(r)\right)-\frac{2}{q}\sum_{r\mid (q,b)}\sum_{d\mid\frac{q}{r}}\frac{\mu(d)\log(d)}{d}x,
\end{equation}
the constant $\gamma$ is Euler-Mascheroni's constant and $(q,b)$ is the gcd of $q$ and $b$.
Lau and Zhao have studied in \cite{LauZhao} the following variance for $q\leq X$ 
\begin{equation}\label{fourierAP}
	A(X,q)=\sum_{b=1}^q\abs*{\sum_{\substack{n\leq X\\ \hspace{0.21cm} n\equiv b[q]\phantom{-}}}a(n)}^2,
\end{equation}
where $a(n)$ is the sequence of Hecke eigenvalue of a fixed modular form.

Several authors have worked with smoothed variants of \eqref{divisorAP} or \eqref{fourierAP}, see for example \cites{Nguyen,FGKM,RicottaKowalski,Lester}. Let us give some details on the results of \cite{FGKM}. The authors manage to prove the following asymptotic formula  
\begin{equation}\label{resultFGKM}
	\sum_{\substack{b=1\\ \hspace{0.22cm} (b,p)=1\phantom{-}}}^pS_f(X,p;b)^{2}=C_0X+O_{\eps}\left(p^{\frac{3}{2}+\eps}+\left(\frac{X^3}{p^2}\right)^{\frac{1}{2}+\eps}\right)
\end{equation}
for any $\eps>0$, where $C_0>0$ is explicit and $S_f(X,p,b)$ is defined by \eqref{a(n)APlisse}. 
Note that the asymptotic formula \eqref{resultFGKM} is non trivial in the range
\begin{equation*}
	X^{\frac{1}{2}+\eps}\leq p\leq X^{\frac{2}{3}-\eps},
\end{equation*}
for any $\eps>0$. This can be compared with the range obtained in \cite{LauZhao}*{Theorem 1}, given by 
\begin{equation*}
	X^{\frac{1}{2}}<p<X,
\end{equation*}
see also \cite{FGKM}*{Remark 1.3}. Thanks to the smooth sums, the authors manage to prove an asymptotic formula for every moment, and not only the second one. They are then able to prove the Gaussian equidistribution of the Hecke eigenvalue of a fixed modular form, see \cite{FGKM}*{Theorem 1.1}.
  
 The smooth sums Lau and Zhao dealt with are different from the ones in \cite{FGKM}, we emphasize the difference here. In \cite{FGKM} the authors essentially study sums of type  
\begin{equation}\label{a(n)APlisse}
	S_f(X,q,b)=\sum_{n\equiv b[q]}a(n)w\left(\frac{n}{X}\right),
\end{equation}
for some smooth function $w$ with compact support in $[1;2]$, $q\geq 1$ prime (we assume $q$ is prime here to simplify the argument but $q$ could in principle be any integer) and $b\in(\Z/q\Z)^*$. They detect the congruence condition $n\equiv b[q]$ with additive characters and then apply the Voronoi summation formula (that we recall in Proposition \ref{Voronoi}) and they get
\begin{equation*}
	\sum_{n=1}^{\infty}a(n)e\left(-\frac{\overline{b}n}{q}\right)\omega\left(\frac{n}{q^2/X}\right),
\end{equation*}
where, as usual, $e(x)=e^{2i\pi x}$  and $\omega$ is an integral transform of the function $w$ (see \cite{FGKM}*{Equation (3.2)}). The length of the dual sum is 
\begin{equation*}
	\frac{q^2}{X}X^{\eps}. 
\end{equation*}
This is to be compared with \cite{LauZhao} where the authors study the quantity $A(X,q)$ defined in \eqref{fourierAP}. They first smooth out the sum
\begin{equation*}
	A(X,q)\rightsquigarrow \sum_{b=1}^q\abs*{\sum_{n\equiv b[q]}a(n)w(n)}^2,
\end{equation*}
where $w$ is a smooth function supported on $[H;X]$ whose derivatives satisfy certain decay conditions and $q<H<X$ is a parameter optimized at the end of the proof (see \cite{LauZhao}*{Proposition 4.1} for their value of  $H$). Here again, we assume $q$ prime for simplicity. Just as in \cite{FGKM}, the congruence is detected using additive characters and then the authors apply Voronoi's summation formula. They are left with 
\begin{equation*}
	\sum_{n=1}^{\infty}a(n)e\left(-\frac{\overline{b}n}{q}\right)\omega_J\left(\frac{\sqrt{n}}{q}\right),
\end{equation*}
where $\omega_J$ is defined by \eqref{defomegaJ} (see \cite{LauZhao}*{Equation (4.5)}). In this case, the length of the dual sum is 
\begin{equation*}
	\frac{q^2}{X}\left(\frac{X}{H}\right)^2X^{\eps}\geq \frac{q^2}{X}X^{\eps}.
\end{equation*}
In both cases the length of the dual sum is shorter than the initial sum, but the saving obtained in \cite{FGKM} is much better.
\subsection{Statement of the main results}
Let $k\geq 4$ be an even integer and $f$ a weight $k$ cuspidal modular form for $\SL_2(\Z)$. We assume $f$ is an eigenvector of every Hecke operator with eigenvalue sequence $(a(n))_{n\geq 1}$. We also assume that  $f$ is primitive \textit{i.e.} $a(1)=1$. We can write the Fourier expansion of $f$ as 
\begin{equation*}
	\forall z\in\mathbb{H},\quad f(z)=\sum_{n=1}^{\infty}a(n)n^{\frac{k-1}{2}}e(nz), 
\end{equation*}
where as usual $\mathbb{H}=\{z\in\C\mid \Im(z)>0\}$ is the upper half plane.\bigbreak
\noindent
The two main objects of study in this article are the following.
	Let $X\geq 1$ be a real number and $q\leq X$ be an integer. Recall the definition of the variance of Fourier coefficients in arithmetic progressions $A(X,q)$ defined by \eqref{fourierAP} and the variance of the divisor function in arithmetic progressions $A(X,q;\tau)$ defined by \eqref{divisorAP}.
	 
Several authors studied $A(X,q)$ or close variants. Blomer proved in \cite{Blomer} that for any $\eps>0$
\begin{equation*}
	A(X,q)\ll_{\eps} X^{1+\eps}. 
\end{equation*}
Guangshi sharpened Blomer's result in \cite{Guangshi} and showed that 
\begin{equation*}
	A(X,q)\ll X(\log(X))^2,
\end{equation*}
using Wilton's estimate $\sum_{n\leq x}a(n)e(\beta n)\ll x^{\frac{1}{2}}\log(x)$. Finally, Lau and Zhao greatly improved the last two estimates in \cite{LauZhao} and found a transition behavior for $A(X,q)$ and $A(X,q;\tau)$ at $q\approx X^{\frac{1}{2}}$. More precisely, they showed for $B(X,q)$ either equal to $A(X,q)$ or to $A(X,q;\tau)$ that
\begin{itemize}
	\item For $1\leq q\ll_{\eps} X^{\frac{1}{4}+\eps}$,
\begin{equation*}
	B(X,q)\ll q^{\frac{1}{3}}X^{\frac{2}{3}+\eps}.
\end{equation*}
	\item For $X^{\frac{1}{4}+\eps}\ll q\ll X^{\frac{1}{2}-\eps}$,
\begin{equation*}
	B(X,q)\asymp q^{-1}\sum_{r\mid q}r\phi(r)X^{\frac{1}{2}}.
\end{equation*}
	\item For $X^{\frac{1}{2}}<q<X$,
\begin{equation*}
	A(X,q)=cX+O\left(q^{\frac{1}{6}}X^{\frac{5}{6}}\tau(q)+q^{-\frac{1}{2}}X^{\frac{5}{4}}g(q)\right),
\end{equation*}
where $c$ is the residue of the Rankin-Selberg $L$-function $L(f\otimes f,s)$ at $s=1$ and $g(q)=\sum_{r\mid q}\phi(r)r^{-1}$. 
	\item For $X^{\frac{1}{2}}<q<X$,
\begin{equation*}
	A(X,q;\tau)=\frac{X}{q}\sum_{r\mid q}\phi(r)P_3\left(\log\left(\frac{r^2}{X}\right)\right)+O\left(q^{\frac{1}{6}}X^{\frac{5}{6}}\tau(q)\log(X)^3+q^{-\frac{1}{2}}X^{\frac{5}{4}}g(q)\right), 
\end{equation*}
where $P_3$ is a polynomial of degree $3$ with positive leading coefficient. 
\end{itemize}

The goal of this article is to improve Lau and Zhao's results in the range $X^{\frac{1}{2}}<q<X$. Precisely, we will prove the following result in the cuspidal case.
\begin{thm}\label{moduleqqc}
	Assume $X^{\frac{1}{2}}<q<X$, then 
\begin{equation*}
	A(X,q)=cX+O\left(q^{-1}X^{\frac{3}{2}}g(q)+q^{\frac{5}{54}}X^{\frac{47}{54}}X^{\eps}+q^{\frac{1}{2}}X^{\frac{1}{2}}\right).
\end{equation*} 
\end{thm}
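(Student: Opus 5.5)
Expanding the square and using orthogonality of residues mod $q$, I would write
\begin{equation*}
	A(X,q)=\sum_{n\leq X}a(n)^2+2\sum_{\substack{1\leq h<X/q}}\ \sum_{\substack{n\leq X-hq}}a(n)a(n+hq),
\end{equation*}
since $n\equiv m[q]$ with $m=n+hq$. The diagonal term $\sum_{n\leq X}a(n)^2$ is handled by Rankin--Selberg: it equals $cX+O(X^{3/5})$, and since $X^{3/5}\leq q^{1/2}X^{1/2}$ once $q>X^{1/5}$, this is absorbed into the stated error. The first moment of $L(f\otimes f,s)$ in the height aspect lets one sharpen this, but it is not needed for this range. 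The whole difficulty is then the off-diagonal contribution
\begin{equation*}
	\mathcal{O}=\sum_{h\leq X/q}D(X-hq;hq),\qquad D(Y;\ell)=\sum_{n\leq Y}a(n)a(n+\ell).
\end{equation*}
Here the number of shifts $H=X/q$ is at most $X^{1/2}$.

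The plan is to estimate $\mathcal{O}$ in two complementary ways and combine them. The first route is purely individual. Non-trivial bounds for sharp-cutoff shifted convolution sums, of the shape $D(Y;\ell)\ll Y^{2/3+\eps}$ uniformly for $\ell\leq Y$ (spectral methods, possibly with a refined dependence on $\ell$), give $\mathcal{O}\ll (X/q)X^{2/3+\eps}$. This is too weak when $q$ is near $X^{1/2}$, so a second route is needed there.

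The second route reverses the order: I would go back to the additive-character picture of the introduction and apply Voronoi (Proposition \ref{Voronoi}) to the smoothed version of $A(X,q)$. To do this, replace the sharp cutoff $n\leq X$ by a smooth weight $w$ that equals $1$ on $[H_0,X-\Delta]$. The cutoff cost is bounded via Wilton/Cauchy--Schwarz by roughly $\Delta$ together with a term in $q^{1/2}X^{1/2}$, which is where that error arises. Writing the variance over $b$ as $q$ times a sum over characters $e(\cdot/q)$ of $|\sum_n a(n)w(n)e(an/q)|^2$ and dualising gives sums of the shape $\sum_n a(n)e(-\bar b n/q)\omega_J(\sqrt n/q)$. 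Grouping by $r=q/(a,q)$ and opening the square, the zero-frequency dual terms produce a secondary term involving $\sum_{r\mid q}\phi(r)/r$. Bounding these via the first moment of $L(f\otimes f,s)$ on vertical lines, through Mellin inversion of $\omega_J$, gives the $q^{-1}X^{3/2}g(q)$ term. The non-zero dual frequencies again form shifted convolution sums, now of length about $q^2/X$ times $(X/\Delta)^2$ with shifts divisible by $r$, and to these I would apply the same shifted convolution bound.

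Finally I would optimise the smoothing parameters $\Delta$ and $H_0$ against the shifted convolution estimate. The exponent $q^{5/54}X^{47/54}$ should come out of balancing the smoothing loss $\Delta$ against the dual bound, which is roughly $(q^2X/\Delta^2)^{\theta}$ with a $2/3$-type exponent. The main obstacle I expect is this step: obtaining a shifted convolution bound uniform in the shift and in the smooth-weight parameters, which have large derivatives at scale $\Delta$. It is also delicate to ensure the dual secondary term extracted via the $L(f\otimes f)$ moment matches precisely $q^{-1}X^{3/2}g(q)$ rather than Lau--Zhao's $q^{-1/2}X^{5/4}$. I would first try using the uniform spectral bound of Blomer type for $\sum a(n)a(n+h)W(n)$ with explicit dependence on the weight's derivatives.
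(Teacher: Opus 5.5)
Your second route has the same architecture as the paper's proof: smoothing with $w$ supported on $[H,X]$, additive characters and Voronoi, the dual diagonal via Mellin inversion of $\omega_J$ and the first moment of $L(f\otimes f,s)$, and the dual off-diagonal as shifted convolution sums with shifts $lr$. On its own this route covers the whole range $X^{1/2}<q<X$.

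The gap is that the quantitative heart of the theorem is missing, namely the bound for the dual off-diagonal term. The weights there are $g(x,y)=\omega_J(\sqrt{x}/h)\rho(x/M)\,\omega_J(\sqrt{y}/h)\rho(y/N)$ with $h=dr$. They are not smooth at scale $x$: their derivatives grow like $P=X/H$ (Lemma~\ref{majomega}). In addition, $M\leq N\ll (dr)^2XH^{-2}X^{\eps}$ can be very unbalanced, and there are $\asymp N/r$ shifts. The paper splits at a parameter $\Omega$:
\begin{itemize}
\item for $M\leq\Omega$ it uses the trivial bound $h^3X^{1/2}r^{-1}(MN)^{1/4}X^{\eps}$, which comes from $\omega_J(\sqrt{n}/h)\ll h^{3/2}n^{-3/4}X^{1/4}$;
\item for $M\geq\Omega$ it uses the Duke--Friedlander--Iwaniec delta-method bound $AP^{5/4}(M+N)^{1/4}(MN)^{1/4+\eps}$ for each shift (Proposition~\ref{problemconv}), with $A=h^3X^{1/2}(MN)^{-3/4}$, summed over $l\ll N/r$.
\end{itemize}
Taking $\Omega=q^{4/3}X^{7/3}H^{-3}$ gives $E(X,q)\ll q^{5/6}X^{4/3}H^{-5/4}X^{\eps}$. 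Balancing this against the smoothing cost $X^{1/2}Hq^{-1/2}$ forces $H\asymp\max(q^{16/27}X^{10/27},q)$, and that is exactly where $q^{5/54}X^{47/54}$ comes from.

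Your heuristic of balancing $\Delta$ against $(q^2X/\Delta^2)^{\theta}$ cannot reproduce this for any $\theta$. It always yields $q^{2\theta/(1+2\theta)}X^{\theta/(1+2\theta)}$, whose $q$-exponent is twice its $X$-exponent, whereas in $q^{5/54}X^{47/54}$ the $q$-exponent is far smaller. A Blomer--Harcos type bound with explicit derivative dependence might replace Proposition~\ref{problemconv}. Until you carry out that computation, including unbalanced $M,N$ and the sum over shifts, the stated exponents are asserted rather than proved.

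Several supporting claims also need fixing.
\begin{itemize}
\item \textbf{Cutoff cost.} It is not ``roughly $\Delta$ plus a term in $q^{1/2}X^{1/2}$''. Cauchy--Schwarz with Rankin--Selberg gives $E\ll H^2q^{-1}$ for the edge pieces, provided $H\geq\max(q,X^{3/5})$. So the cost is $A_w^{1/2}E^{1/2}+E\ll X^{1/2}Hq^{-1/2}+H^2q^{-1}$. The term $q^{1/2}X^{1/2}$ is just this evaluated at $H=q$, while the additive $O(H)$ comes from $\int w^2=X+O(H)$ in the main term.
\item \textbf{Main term.} In the Voronoi route, $cX$ is itself produced by the dual diagonal $n=m$, through the residue at $s'=1-s$ and Parseval (Proposition~\ref{etudeTP}). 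It is not a secondary term, and your Rankin--Selberg evaluation of the original diagonal plays no role. If you dualise only $\mathcal{O}$, you need exactly this computation to see the cancellation.
\item \textbf{First route.} It is unnecessary, and it rests on an input that is not available. It only fits inside the stated error when $X^{5/3}/q\leq q^{5/54}X^{47/54}$, i.e.\ $q\geq X^{43/59}$. In that range every shift $hq$ exceeds $X^{2/3}\geq Y^{2/3}$, so every shift lies beyond the range $\ell\leq Y^{2/3}$ of Blomer's uniform sharp-cutoff bound.
\end{itemize}
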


\begin{rmq}
Comparing with \cite{LauZhao}*{Theorem 1}, we do obtain an improved error term since for $q>X^{\frac{1}{2}}$
\begin{equation*}
	q^{-1}X^{\frac{3}{2}}g(q)\ll q^{-\frac{1}{2}}X^{\frac{5}{4}}g(q),
\end{equation*}
\begin{equation*}
	q^{\frac{5}{54}}X^{\frac{47}{54}}X^{\eps}+q^{\frac{1}{2}}X^{\frac{1}{2}}\ll q^{\frac{1}{6}}X^{\frac{5}{6}}\tau(q).
\end{equation*}
\end{rmq}

\begin{rmq}
In Theorem \ref{moduleqqc}, the main error term is  $O(q^{-1}X^{\frac{3}{2}}g(q))$ for $X^{\frac{1}{2}}\leq q\leq X^{\frac{34}{59}}$, the main error term is  $O\left(q^{\frac{5}{54}}X^{\frac{47}{54}}X^{\eps}\right)$ for $X^{\frac{34}{59}}\leq q\leq X^{\frac{10}{11}}$.  
\end{rmq}

\begin{rmq}
In Theorem \ref{moduleqqc}, the error term $O\left(X^{\frac{1}{2}}q^{\frac{1}{2}}\right)$ arises from the Cauchy--Schwarz inequality applied at the outset of the proof and can be regarded as the best possible error term obtainable by this method, see also Remark \ref{q1/2}. 
\end{rmq}

For the case of the divisor function, our improvement is the following.
\begin{thm}\label{thdivisor}
	Assume $X^{\frac{1}{2}}<q<X$, then
\begin{align*}
	&A(X,q;\tau)=\frac{X}{q}\sum_{r\mid q}\phi(r)P_3\left(\frac{r^2}{X}\right)\\
	&+O\left(q^{-1}X^{\frac{3}{2}}g(q)+q^{\frac{1}{2}}X^{\frac{1}{2}}\tau(q)\log(X)^{4+\eps} +q^{\frac{1}{4}}X^{\frac{3}{4}}(\log(X)^3+\log(X)^{\frac{5}{2}+\eps}\tau(q)) +q^{\frac{5}{36}}X^{\frac{61}{72}}X^{\eps}\right),
\end{align*}
where $P_3(\xi)$ is a polynomial in $\log(\xi)$ of degree $3$ with positive leading coefficient (the same as in \cite{LauZhao}*{Theorem 3}). 
\end{thm}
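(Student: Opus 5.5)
We will follow the same architecture as the cuspidal case (Theorem \ref{moduleqqc}), adapted to $\tau$ and its polar main terms. Write $A(X,q;\tau)$ by expanding the square over residues $b$. Separating $m=n$ from $m\neq n$ gives the \emph{diagonal term} $\sum_{n\leq X}\tau(n)^2$. The off-diagonal part becomes shifted convolution sums $\sum_{1\leq h\leq X/q}\sum_{n\leq X-hq}\tau(n)\tau(n+hq)$. From these we subtract the cross terms with $\mathrm{MT}(b,x,q)$ and the square of $\mathrm{MT}$, which are computed explicitly by grouping residues according to $r=(q,b)$.

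First, we will smooth the sharp cutoff $n\leq X$ with a weight $w$ that equals $1$ on $[H,X-H']$, exactly as for $A(X,q)$. The Cauchy--Schwarz inequality bounds the difference between sharp and smooth sums. The transition zones have length $\ll q^{1/2}X^{1/2}$-type size after optimisation, which produces the term $q^{1/2}X^{1/2}\tau(q)\log(X)^{4+\eps}$; the extra logs come from $\sum\tau(n)^2\ll X\log(X)^3$ on short intervals via Shiu's theorem. Second, for the smoothed diagonal we will use the Dirichlet series $\zeta(s)^4/\zeta(2s)$ and a contour shift. The first moment bound of the Rankin--Selberg type $L$-function (here $\zeta^4/\zeta(2s)$ in the height aspect, i.e.\ fourth-moment information on $\zeta$) gives an asymptotic $XP(\log X)$ with a power-saving error. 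Third, we treat the off-diagonal part. The Voronoi formula for $\tau$, the analogue of Proposition \ref{Voronoi} including the polar terms, turns each sum in a progression into its main term, which we match with $\mathrm{MT}(b,x,q)$, plus a dual sum of length $\frac{q^2}{X}(X/H)^2X^\eps$. Squaring and summing over $b$ leads to additive-character orthogonality, and the main terms combine into $\frac{X}{q}\sum_{r\mid q}\phi(r)P_3(r^2/X)$ as in \cite{LauZhao}*{Theorem 3}. The error from the main-term mismatch is $q^{-1}X^{3/2}g(q)$, which arises as in the cuspidal case from the dual sum's diagonal $\sum_{n}\tau(n)^2\omega_J(\sqrt n/q)^2$. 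The middle-range contribution $q^{1/4}X^{3/4}$ with log powers comes from the polar (Eisenstein) part of the Voronoi formula interacting with the dual sum via Cauchy--Schwarz.

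\textbf{The main obstacle} will be the off-diagonal dual terms, namely sums $\sum_{h}\sum_n\tau(n)\tau(n+h)W(n,h)$ with smooth weights. Unlike the cusp form case, these shifted divisor sums have their own main terms, of Ingham/Estermann type, which must be extracted and shown to cancel or fold into $P_3$. For the remainder we will use a non-trivial shifted convolution estimate for $\tau$, for instance Motohashi's or the spectral large-sieve bound, with exponent weaker than the cuspidal $\theta$ bound. The worse exponent leads to the term $q^{5/36}X^{61/72+\eps}$ after optimising $H$ against the smoothing loss. We will choose $H$ by balancing $X^{1/2}q^{1/2}$-type smoothing errors against the shifted-sum bound, and carry out the final optimisation at the end.
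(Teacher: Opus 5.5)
Your outline mixes two incompatible decompositions, and as a result it misidentifies where every error term except $q^{-1}X^{3/2}g(q)$ comes from. In particular, the optimisation you describe does not produce the stated exponents.

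If you open the square in the original variables, the diagonal $\sum_{n\le X}\tau(n)^2\sim XP(\log X)$ is not the main term of the theorem, which is a cubic in $\log(r^2/X)$. That main term only emerges after cancellation between pieces of size about $X^2(\log X)^2/q$. This route would also need power-saving asymptotics for $\sum_n\tau(n)\tau(n+hq)$ uniformly over $X/q$ shifts, and known error terms do not give that near $q=X^{1/2}$.

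In the route that works (smooth, apply Voronoi, then open the square), the main term comes from the \emph{dual} diagonal $\frac1q\sum_{r\mid q}\frac{\phi(r)}{r^2}\sum_n\tau(n)^2\omega_B(\sqrt n/r)^2$ with $B\in\{Y,K\}$, not $\omega_J$. It is evaluated by a double Mellin integral against $\zeta^4(s+s')/\zeta(2(s+s'))$, using the fourth-order pole at $s'=1-s$, Parseval, and Lemma~\ref{tptau}. That last step costs $O(X^{3/4}H^{1/4}(\log X)^3)$, a term with no cuspidal analogue, and your plan never accounts for it.

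This term is what dictates $q^{5/36}X^{61/72+\eps}$. The off-diagonal bound from Proposition~\ref{problemconv} is the \emph{same} for $\tau$ as for $a(n)$, namely $q^{5/6}X^{4/3}H^{-5/4}X^{\eps}$, so your ``weaker exponent for $\tau$'' explanation is not what happens. Balancing this bound against $X^{3/4}H^{1/4}$ gives $H\asymp q^{5/9}X^{7/18}$. If instead you balance against the smoothing loss $X^{1/2}Hq^{-1/2}$, as you propose, you get $H\asymp q^{16/27}X^{10/27}$. Then $X^{3/4}H^{1/4}\asymp q^{4/27}X^{91/108}$, which exceeds $q^{5/36}X^{61/72}$ by the factor $(q^2/X)^{1/216}$ throughout $q>X^{1/2}$. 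Note also that the weights $\omega_B(\sqrt x/h)$ have derivatives governed by $P=X/H$. What you need is therefore a shifted-convolution bound with explicit polynomial dependence on $P$ (here $P^{5/4}$), not a sharp-cutoff Motohashi-type asymptotic.

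The second gap is the Estermann-type main terms $\int g\,\Lambda_{lr}$ of the shifted divisor sums. They cannot be folded into $P_3$, since the theorem asserts Lau--Zhao's polynomial; they must be shown to be small, and you leave this substantial step blank. The argument goes as follows:
expand $\Lambda_{lr}$ via Ramanujan sums into combinations of $\sigma_{-1}^{(k)}(lr)$;
write $(x+lr)^{-s}$ and $\log(x+lr)(x+lr)^{-s}$ as Mellin--Barnes integrals;
sum over $l$ using $\sum_{l}\sigma_{-1}(lr)l^{z}=\zeta(-z)\zeta(1-z)h(-1,r,-z)$;
move the $z$-line past the pole at $z=-1$, and bound both the residue and the remaining integral with $\Re s=1-2\delta$, $\delta\asymp 1/\log(X/H)$.
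This gives $O(q\tau(q)X^{1/2}H^{-1/2}(\log X)^{4+\eps})$.

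The $Y$--$K$ cross term, which your outline omits entirely, leads to the problem $n+m=lr$ and a main term $\int_0^{lr}g_{Y,K}\Lambda_{lr}$. That term is bounded by splitting the $l$-range and using the exponential decay of $\omega_K$, giving $O(qX^{3/4}H^{-3/4}\tau(q)(\log X)^{5/2+\eps})$.

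With $H\asymp q$ these are exactly the terms $q^{1/2}X^{1/2}\tau(q)(\log X)^{4+\eps}$ and $q^{1/4}X^{3/4}\tau(q)(\log X)^{5/2+\eps}$ of the theorem, while $q^{1/4}X^{3/4}(\log X)^3$ is $X^{3/4}H^{1/4}(\log X)^3$. None of them comes from the smoothing, which only costs $q^{1/2}X^{1/2}(\log X)^{3/2}\tau(q)^{1/2}$. Nor do they come from the polar term of the Voronoi formula, which is simply removed by subtracting the expected main term before opening the square.
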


\begin{rmq}
Comparing with \cite{LauZhao}*{Theorem 2}, we do obtain an improved error term since for $X^{\frac{1}{2}}<q<X^{1-\eps}$ for any $\eps>0$
\begin{equation*}
	q^{-1}X^{\frac{3}{2}}g(q)\ll q^{-\frac{1}{2}}X^{\frac{5}{4}}g(q),
\end{equation*}
\begin{equation*}
	q^{\frac{1}{2}}X^{\frac{1}{2}}\tau(q)\log(X)^{4+\eps} +q^{\frac{1}{4}}X^{\frac{3}{4}}(\log(X)^3+\log(X)^{\frac{5}{2}+\eps}\tau(q)) +q^{\frac{5}{36}}X^{\frac{61}{72}}X^{\eps}\ll q^{\frac{1}{6}}X^{\frac{5}{6}}\tau(q)\log(X)^3. 
\end{equation*}
\end{rmq}

\begin{rmq}
In Theorem \ref{thdivisor}, the main error term is  $O\left(q^{-1}X^{\frac{3}{2}}g(q)\right)$ 
 for $X^{\frac{1}{2}}\leq q\leq X^{\frac{47}{82}}$, the main error term is  $O\left(q^{\frac{5}{36}}X^{\frac{61}{72}}X^{\eps}\right)$ for $X^{\frac{47}{82}}\leq q\leq X^{\frac{7}{8}}$.  
\end{rmq}
Let us mention we can sligthly improve the result in both Theorem \ref{moduleqqc} and Theorem \ref{thdivisor} if one assumes a wider zero-free region for the Riemann $\zeta$-function than the one currently known. Precisely, let $\delta\in[0;\frac{1}{12}]$, we state the following hypothesis on the zeros of the Riemann $\zeta$ function.
\begin{equation}\label{hypo}\tag{$H_{\delta}$}
	\zeta \text{ has no zeros on the half plane }\{\sigma\geq \frac{2}{3}+4\delta\}.
\end{equation}
Currently, the only value of $\delta\in[0;\frac{1}{12}]$ for which we know \eqref{hypo} is true is $\delta=\frac{1}{12}$.
Under \eqref{hypo}, we can replace the error term $q^{-1}X^{\frac{3}{2}}g(q)$ in both Theorem \ref{moduleqqc} and Theorem \ref{thdivisor} by 
\begin{equation}\label{MET}
	E_{\delta}=\min\left(\widetilde{E_{\delta}},q^{-1}X^{\frac{3}{2}}g(q)\right),
\end{equation}
where
\begin{equation*}
	\widetilde{E_{\delta}}:= \frac{X^{\frac{5}{3}}}{q^{\frac{4}{3}}}\tau(q)+\frac{X^{\frac{5}{3}}}{qd_{\frac{1}{2}}(q)^{\frac{1}{3}}} \left(\frac{q^2}{X}\right)^{2\delta}\sum_{\substack{r\mid q\\ \hspace{0.22cm}X^{\frac{1}{2}}\leq r\phantom{-}}}\frac{\phi(r)}{r} +\frac{X^{\frac{3}{2}}}{q}\sum_{\substack{r\mid q\\ \hspace{0.22cm} X^{\frac{2}{3}}q^{-\frac{1}{3}} <r<X^{\frac{1}{2}}\phantom{-}}}\frac{\phi(r)}{r}, 
\end{equation*}
where $d_{\frac{1}{2}}(q)$ is the smallest divisor of $q$ which is $> X^{\frac{1}{2}}$. 

 	Let us examine the error term $\widetilde{E_{\delta}}$. First, it is easy to see that $\widetilde{E_{\delta}}\ll q^{-\frac{2}{3}}X^{\frac{4}{3}}g(q)$. In some cases we can do better. For example, if $q$ is prime then the sum
\begin{equation*}
 	\sum_{\substack{r\mid q\\ \hspace{0.22cm} X^{\frac{2}{3}}q^{-\frac{1}{3}} <r<X^{\frac{1}{2}}\phantom{-}}}\frac{\phi(r)}{r},
\end{equation*}
vanishes as the index set is empty. We also have $d_{\frac{1}{2}}(q)=q$ because $q>X^{\frac{1}{2}}$, hence in the case $q$ prime and under \eqref{hypo} we have
\begin{equation*}
	A(X,q)=cX+O\left(\frac{X^{\frac{5}{3}}}{q^{\frac{4}{3}}}\left(\tau(q)+\left(\frac{q^2}{X}\right)^{2\delta}\right)+q^{\frac{5}{54}}X^{\frac{47}{54}}X^{\eps}+q^{\frac{1}{2}}X^{\frac{1}{2}}\right).
\end{equation*}
We can generalize this. Let $P^{-}(q)$ denote the smallest prime factor of $q$. Then, the biggest divisor of $q$ that is not $q$ is 
\begin{equation*}
	\frac{q}{P^{-}(q)}.
\end{equation*}
If we assume this divisor is $<X^{\frac{2}{3}}q^{-\frac{1}{3}}$ that is
\begin{equation*}
	P^{-}(q)>q^{\frac{4}{3}}X^{-\frac{2}{3}}, 
\end{equation*}
then the two conditions $r\mid q$ and $X^{\frac{2}{3}}q^{-\frac{1}{3}}<r<X^{\frac{1}{2}}$ are incompatible and we get
\begin{equation*}
	\widetilde{E_{\delta}}\ll \frac{X^{\frac{5}{3}}}{q^{\frac{4}{3}}}\left(\frac{q^2}{X}\right)^{2\delta}. 
\end{equation*}
Finally, we have $E\ll q^{\frac{5}{54}}X^{\frac{47}{54}}$ for $q\geq X^{\frac{34}{59}}$, hence we can check that for $k\leq 5$ and
\begin{equation*}
	q=p^k\leq X^{\frac{34}{59}}
\end{equation*}
we have
\begin{equation*}
	\widetilde{E_{\delta}}\ll \frac{X^{\frac{5}{3}}}{q^{\frac{4}{3}}}\left(\tau(q)+\left(\frac{q^2}{X}\right)^{2\delta}\right).
\end{equation*}

\subsection{Sketch of the proof}

Our strategy of proof follows the approach of \cite{LauZhao}. There are two main differences between our method and the one used by Lau-Zhao. We give some details for the case of $A(X,q)$. \smallbreak
\noindent
First, in \cite{LauZhao}, the main term $cX$ is extracted using an Abel summation and the Rankin-Selberg estimate
\begin{equation*}
	\sum_{n\leq x}a(n)^2=cx+O(x^{\frac{3}{5}}).
\end{equation*}
We now know (see \cite{Huang}) that the exponent $\frac{3}{5}$ is not optimal but it does not help us because the Abel summation involves the derivative of the function $\omega$ appearing when one applies Voronoi's summation formula. Due to the choice of function $w$, the $\omega$ function has large derivatives. In this article, we extract the main term using the inverse Mellin transform on the $\omega$ function. Doing so, we are left with an integral of an integrand of the form
\begin{equation*}
	L(f\otimes f,s)\times\text{ quotient of $\Gamma$ functions},
\end{equation*}
where $L(f\otimes f,s)$ is a Rankin-Selberg $L$-function.  
We can bound this integral using Stirling's formula and a bound on the first moment of $L(f\otimes f,\sigma+it)$ for $\sigma\in ]0;1[$.\smallbreak
\noindent
Our second main contribution relies on a result in \cite{quadraticdivisorproblem} about shifted convolution sums. We will use Proposition \ref{problemconv} to bound the shifted convolution sum arising from Voronoi's summation formula. Such sums are roughly of the form 
\begin{equation*}
	\sum_{n\equiv m[q]}a(n)a(m)\omega\left(\frac{\sqrt{m}}{q}\right)\omega\left(\frac{\sqrt{n}}{q}\right).
\end{equation*}
In \cite{LauZhao}, this sum is bounded almost trivially by combining Abel summation with Rankin's estimate. Using Proposition \ref{problemconv}, we can exploit the oscillations of the Fourier coefficients of $f$. This leads to an extra saving in the range $q\geq X^{\frac{1}{2}}$.\smallbreak
\noindent
The arguments are similar for the proof of Theorem \ref{thdivisor} but the details are more involved.\bigbreak
\noindent
The structure of the article is the following. In section \ref{cplxana} we recall some useful standard results in complex analysis and modular forms. In section \ref{specialfunction} we prove preparatory  Lemmas on the function which appears in the Voronoi summation formula. The two main sections of the paper are Sections \ref{proofTh} and \ref{divisor}, where we respectively prove Theorem \ref{moduleqqc} and Theorem \ref{thdivisor}.
\bigbreak
\noindent 
\textbf{Acknowledgement} : I thank Yuk-Kam Lau for pointing out a mistake in a preliminary version of this work. I also thank Etienne Fouvry and Emmanuel Kowalski for their helpful comments in the preparation of this work.   
\bigbreak
\noindent 
\textbf{Notations} : An integral $\int_{(c)}$ means $\int_{c-i\infty}^{c+i\infty}$. A sum $\sum_{d\,(q)}$ is a sum over $d\in\Z/q\Z$ and a sum $\sums{d\,(q)}$ is a sum over $d\in(\Z/q\Z)^*$. The sum $c_k(h)=\sums{d\,(k)}e\left(\frac{hd}{k}\right)$ is the Ramanujan sum. The sum $\sumd{M}$ means the index set of the sum is dyadically split. By $n\sim N$ we mean $N\leq n\leq 2N$ and by $A\asymp B$ we mean $A\ll B\ll A$. The quantity $\eps>0$ is arbitrarily small and may not be the same at each occurence. 
\section{Preliminaries and background}\label{cplxana}
\subsection{Standard analytic and modular tools}
We begin this subsection by recalling some well-known formulas from complex analysis.
\begin{lemme}\label{Stirling}
	Let $\Gamma$ be the Euler $\Gamma$ function defined by $\Gamma(z)=\int_0^{\infty}t^{z-1}e^{-t}\dt$. We have the following asymptotic formulas
\begin{enumerate}
	\item Let $z\in\C$ with $\abs{\arg(z)}\leq \pi-\eps$
\begin{equation*}
	\Gamma(z)=\left(\frac{2\pi}{z}\right)^{\frac{1}{2}}\left(\frac{z}{e}\right)^z\left(1+O\left(\frac{1}{\abs{z}}\right)\right).
\end{equation*}
	\item For $\sigma\in\R$ fixed and $t\neq 0$
\begin{equation*}
	\Gamma(\sigma+it)=\sqrt{2\pi}\abs{t}^{\sigma-\frac{1}{2}}e^{-\frac{\pi}{2}\abs{t}}\left(\frac{\abs{t}}{e}\right)^{it}e^{i\frac{\pi}{2}(\sigma-\frac{1}{2})}\left(1+O\left(\frac{1}{\abs{t}}\right)\right).
\end{equation*}
\item For $\sigma\in\R$ fixed and $t\neq 0$
\begin{equation*}
	\abs{\Gamma(\sigma+it)}=\sqrt{2\pi}\abs{t}^{\sigma-\frac{1}{2}}e^{-\frac{\pi}{2}\abs{t}} \left(1+O\left(\frac{1}{\abs{t}}\right)\right). 
\end{equation*}
\item For $\sigma\in\R$ fixed and $\abs{t}\geq 2$
\begin{equation*}
	\abs{\Gamma'(\sigma+it)}\asymp \abs{\log(t)}\abs{\Gamma(\sigma+it)}.
\end{equation*}
\end{enumerate}
\end{lemme}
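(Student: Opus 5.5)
The four assertions of Lemma \ref{Stirling} are classical consequences of Stirling's formula, and I would derive them in order, each from the previous one. For (1), I will start from the Binet-type representation
\begin{equation*}
	\log\Gamma(z)=\left(z-\frac{1}{2}\right)\log z-z+\frac{1}{2}\log(2\pi)+\int_0^{\infty}\frac{\frac{1}{2}-\{u\}}{z+u}\du,
\end{equation*}
valid on $\C\setminus]-\infty;0]$. It comes from applying Euler--Maclaurin summation to $\log\Gamma(z+N)-\log\Gamma(z)=\sum_{j<N}\log(z+j)$ and letting $N\to\infty$, with the constant identified through the Wallis product or the duplication formula. I then integrate the remainder by parts once, using the primitive of $\frac{1}{2}-\{u\}$, which is bounded. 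This reduces the remainder to $\int_0^\infty O(1)\abs{z+u}^{-2}\du$.

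The key point, and the only one needing care, is the uniformity in the sector $\abs{\arg z}\leq\pi-\eps$. For $u\geq 0$ one has $\abs{z+u}\geq \sin(\eps/2)(\abs{z}+u)$, since the angle between $z$ and the positive real axis is at most $\pi-\eps$. This gives the bound $O_\eps(1/\abs{z})$. Exponentiating, and using $e^{O(1/\abs z)}=1+O(1/\abs z)$ for $\abs{z}\geq 1$, gives (1). For bounded $\abs z$ away from the poles, the statement is vacuous up to constants.

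For (2), I take $z=\sigma+it$ with $\sigma$ fixed and $\abs t$ large; small $\abs t$ is absorbed into the $O$-term, since $\Gamma$ is continuous and nonvanishing there whenever $\sigma$ is not a nonpositive integer. Then $\abs{\arg z}\leq\pi-\eps$ holds for $\abs t\gg1$, so (1) applies. I expand
\begin{equation*}
	\log z=\log\abs t+i\frac{\pi}{2}\operatorname{sgn}(t)-i\frac{\sigma}{t}+O(t^{-2}),
\end{equation*}
and compute $(z-\frac12)\log z-z$. The real part is $(\sigma-\frac12)\log\abs t-\frac{\pi}{2}\abs t+\sigma-\sigma+O(1/\abs t)$; here the term $-it\cdot(-i\sigma/t)\cdot(-1)$ cancels the $-\sigma$ from $-z$. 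The imaginary part is $t\log\abs t-t+\frac{\pi}{2}(\sigma-\frac12)\operatorname{sgn}(t)+O(1/\abs t)$. This yields the stated formula; the phase factor is written in the statement for $t>0$, and for $t<0$ one uses $\Gamma(\bar z)=\overline{\Gamma(z)}$.

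Statement (3) follows from (2) by taking absolute values, since $\abs{(\abs t/e)^{it}}=1$ and $\abs{e^{i\frac{\pi}{2}(\sigma-\frac12)}}=1$.

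For (4), I write $\Gamma'=\psi\Gamma$ and show that $\psi(z)=\log z+O(1/\abs z)$ in the sector. This follows by differentiating the Binet formula: the derivative of the remainder integral is again $O(\abs z^{-2})$ by the same sector estimate. Alternatively, one can apply Cauchy's formula to the $O(1/\abs z)$ remainder of $\log\Gamma$ on a disc of radius $\asymp\abs z$ contained in a slightly larger sector. Since $\abs{\log z}\asymp\log\abs t$ for $\abs t\geq2$, with modulus between $\log\abs t$ and $\log\abs t+O(1)$, we get $\abs{\psi(\sigma+it)}\asymp\log\abs t$ for $\abs t\geq 2$. For moderate $\abs t$ this uses that $\psi$ has no zeros on the relevant compact set; if needed, this is handled by enlarging the threshold, as the lemma is only applied for large $\abs t$.

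The main obstacle is purely the uniformity of the error term in the sector, which the elementary inequality $\abs{z+u}\gg_\eps\abs z+u$ settles.
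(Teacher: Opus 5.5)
Your argument is correct in substance. For (2)--(3) it matches the paper; for (1) and (4) it takes a different route.

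\textbf{The two routes.} The paper quotes (1) from Iwaniec--Kowalski and states that (2)--(3) follow. It proves (4) from an integral representation of $\Gamma'/\Gamma$ valid only for $\Re z>0$, then extends it to every fixed $\sigma$ via $\frac{\Gamma'}{\Gamma}(z)=\frac{\Gamma'}{\Gamma}(z+1)-\frac{1}{z}$ and induction. You instead derive everything from the Stieltjes form of Binet's formula. That formula holds on all of $\C\setminus]-\infty;0]$, with sector-uniform error from $\abs{z+u}\geq\sin(\eps/2)(\abs{z}+u)$. Differentiating it gives $\frac{\Gamma'}{\Gamma}(z)=\log z+O_\eps(1/\abs{z})$ directly for every fixed $\sigma$ (take $\eps=\eps(\sigma)$ when $\abs{t}\geq 2$). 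So you need no recurrence, and you get an explicit asymptotic rather than only $\asymp$.

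The paper's route is shorter because it cites (1), but the representation it prints is garbled. The correct Binet formula is $\frac{\Gamma'}{\Gamma}(z)=\log z-\frac{1}{2z}-2\int_0^\infty\frac{t\,\dt}{(t^2+z^2)(e^{2\pi t}-1)}$. As printed, with $-2\gamma$ and no factor $t$, the integral diverges at $0$. Your self-contained derivation is therefore the more reliable one.

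\textbf{Two small corrections.}

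First, your claim that small $\abs{t}$ is absorbed into the $O$-term in (2)--(3) is false for $\sigma>\frac{3}{2}$. As $t\to 0$, $\abs{\Gamma(\sigma+it)}\to\Gamma(\sigma)$, while the main term is $\asymp\abs{t}^{\sigma-\frac{1}{2}}$. The relative error is then of size $\abs{t}^{\frac{1}{2}-\sigma}$, which is not $O(1/\abs{t})$. So (2)--(3) should be read for $\abs{t}\geq 1$, which is how the paper uses them (for instance in Lemma \ref{estimF}). This is a defect of the statement, not of your method, just like the missing $\operatorname{sgn}(t)$ in the phase for $t<0$, which you correctly spotted.

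Second, in (4) you do not need to enlarge the threshold. Since $\Im\frac{\Gamma'}{\Gamma}(\sigma+it)=\sum_{n\geq 0}\frac{t}{(n+\sigma)^2+t^2}\neq 0$ for $t\neq 0$, $\Gamma'/\Gamma$ has no zeros on the compact range $2\leq\abs{t}\leq T_0$. The lower bound $\gg\log\abs{t}$ there then follows by continuity.
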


\begin{preuve}
1. This result can be found in \cite{Iwakow}*{Equation (5.112)}.\smallbreak
\noindent 2., 3. follow easily from $1.$\smallbreak
\noindent 
4. If $\sigma>0$, this follows from $3$ and the relation (see \cite{Whittaker}*{Example~12.3.7})
\begin{equation*}
	\forall z\in\C,\,\Re(z)>0,\quad \frac{\Gamma'(z)}{\Gamma(z)}=\log(z)-2\gamma-2\int_0^{\infty}\frac{\dt}{(t^2+z^2)(e^{2\pi t}-1)}.
\end{equation*}
If $\sigma\geq -1$, we use the relation 
\begin{equation*}
	\frac{\Gamma'(z)}{\Gamma(z)}=\frac{\Gamma'(z+1)}{\Gamma(z+1)}-\frac{1}{z}. 
\end{equation*}
We extend the result to any $\sigma\in\R$ by induction. 
\end{preuve}

Using Stirling's formula we can prove the following Lemma.
\begin{lemme}\label{estimF}
	Let $F$ be defined by $F(s)=2\pi i^k(2\pi)^{-2s}\frac{\Gamma\left(\frac{k-1}{2}+s\right)}{\Gamma\left(\frac{k+1}{2}-s\right)}$, then for $\sigma>-\frac{k-1}{2}$ fixed
\begin{equation*}
	\abs{F(\sigma+it)}\asymp \frac{1}{(1+\abs{t})^{1-2\sigma}}.
\end{equation*}
\end{lemme}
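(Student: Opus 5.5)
The plan is to apply part 3 of Lemma \ref{Stirling} separately to the numerator and the denominator of $F$. The prefactor $2\pi i^k$ has constant modulus. Since
\begin{equation*}
	\abs{(2\pi)^{-2s}}=(2\pi)^{-2\sigma}
\end{equation*}
depends only on $\sigma$, it is a harmless positive constant once $\sigma$ is fixed. Hence everything reduces to proving
\begin{equation*}
	\abs*{\frac{\Gamma\left(\frac{k-1}{2}+\sigma+it\right)}{\Gamma\left(\frac{k+1}{2}-\sigma-it\right)}}\asymp (1+\abs{t})^{2\sigma-1}.
\end{equation*}

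For large $\abs{t}$, say $\abs{t}\geq t_0$ with $t_0$ depending on $\sigma$ and $k$, apply part 3 of Lemma \ref{Stirling}. For the numerator take real part $a=\frac{k-1}{2}+\sigma$ and imaginary part $t$. For the denominator take real part $b=\frac{k+1}{2}-\sigma$ and imaginary part $-t$; the lemma depends only on $\abs{t}$, so the sign is irrelevant. This gives
\begin{equation*}
	\abs*{\frac{\Gamma(a+it)}{\Gamma(b-it)}}=\abs{t}^{a-b}\,\frac{1+O(\abs{t}^{-1})}{1+O(\abs{t}^{-1})}.
\end{equation*}
The exponentials $e^{-\frac{\pi}{2}\abs{t}}$ and the factors $\sqrt{2\pi}$ cancel exactly, and $a-b=2\sigma-1$. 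Choosing $t_0$ large enough that both error factors lie in $[\frac12,2]$ yields $\abs{F}\asymp \abs{t}^{2\sigma-1}\asymp(1+\abs{t})^{2\sigma-1}$ for $\abs{t}\geq t_0$.

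For the bounded range $\abs{t}\leq t_0$ I would argue by continuity and compactness. The hypothesis $\sigma>-\frac{k-1}{2}$ gives $a>0$, so $\Gamma(a+it)$ is finite and nonzero on the compact segment; nonvanishing holds because $\Gamma$ has no zeros. The function $1/\Gamma$ is entire and nowhere zero on the finite part of the plane away from poles of $\Gamma$. The only potential problem is a pole of the denominator $\Gamma(b-it)$, which would make $F$ vanish. This happens only at $t=0$ with $b\in\Z_{\leq 0}$, that is $\sigma\geq\frac{k+1}{2}$ and $\sigma-\frac{k+1}{2}$ an integer.

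This is the one real obstacle, since the lower bound in $\asymp$ fails at such a point. I would handle it by noting that the lemma is only used for $\sigma$ in a range like $\sigma<\frac{k+1}{2}$, where $b>0$ and there is no issue; alternatively one restricts $t$ away from $0$ in that degenerate case. Under $b>0$, the modulus $\abs{F(\sigma+it)}$ is continuous and strictly positive on $[-t_0,t_0]$. It is therefore bounded above and below by positive constants, as is $(1+\abs{t})^{2\sigma-1}$, and combining with the large-$\abs{t}$ range finishes the proof, with implied constants depending on $\sigma$ and $k$.
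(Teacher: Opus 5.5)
Your proof is correct and takes the paper's route: part 3 of Lemma \ref{Stirling} applied to numerator and denominator for large $\abs{t}$, where the exponentials cancel and the powers combine to $\abs{t}^{2\sigma-1}$, plus a continuity/compactness argument on the bounded range, which the paper simply calls ``clear''. Your remark that the lower bound genuinely fails at $t=0$ when $\sigma\in\frac{k+1}{2}+\Z_{\geq 0}$ is a correct refinement the paper overlooks, since there $1/\Gamma\left(\frac{k+1}{2}-s\right)$ vanishes; it is harmless because the paper only uses the lemma for small $\sigma$ or only needs the upper bound.
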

\begin{preuve}
The result is clear if $\abs{t}\leq 1$. If $\abs{t}\geq 1$ we use Stirling's formula (Lemma \ref{Stirling}) to get
\begin{equation*}
	\abs*{F(\sigma+it)}\asymp \abs*{\frac{\Gamma\left(\frac{k-1}{2}+\sigma+it\right)}{\Gamma\left(\frac{k+1}{2}-\sigma-it\right)}}\asymp \frac{\abs{t}^{\frac{k-1}{2}+\sigma-\frac{1}{2}}e^{-\frac{\pi}{2}\abs{t}}}{\abs{t}^{\frac{k+1}{2}-\sigma-\frac{1}{2}}e^{-\frac{\pi}{2}\abs{t}}}\asymp \frac{1}{(1+\abs{t})^{1-2\sigma}}. 
\end{equation*}	
\end{preuve}

The following lemma is a Mellin-Barnes integral representation.
\begin{lemme}
Let $A$ and $B$ be positive real numbers. For $\lambda\in\C$
\begin{equation}\label{(A+B)lambda}
	(A+B)^{-\lambda}=\frac{1}{\Gamma(\lambda)2i\pi}\int_{(c)}\Gamma(\lambda+z)\Gamma(-z)A^{-\lambda-z}B^z\dz,
\end{equation}
\begin{equation}\label{log(A+B)(A+B)lambda}
\begin{split}
\log(A+B)(A+B)^{-\lambda} = & \frac{\Gamma'(\lambda)}{2i\pi\Gamma(\lambda)^2}\int_{(c)}\Gamma(\lambda+z)\Gamma(-z)A^{-\lambda-z}B^{z}\dz \\
 &-\frac{1}{2i\pi\Gamma(\lambda)}\int_{(c)}\Gamma'(\lambda+z)\Gamma(-z)A^{-\lambda-z}B^{z}\dz\\
 &+\frac{1}{2i\pi\Gamma(\lambda)}\int_{(c)}\Gamma(\lambda+z)\Gamma(-z)\log(A)A^{-\lambda-z}B^{z}\dz,
\end{split}
\end{equation}
where, in both equations, the path of integration separates the poles of $\Gamma(\lambda+z)$ from those of $\Gamma(-z)$.
\end{lemme}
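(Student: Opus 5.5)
The first formula \eqref{(A+B)lambda} is the classical Mellin--Barnes representation, and I would obtain it from the beta integral. Put $x=B/A$, so that $(A+B)^{-\lambda}=A^{-\lambda}(1+x)^{-\lambda}$, and assume first $\Re(\lambda)>0$. For $-\Re(\lambda)<\Re(z)<0$ the Mellin transform of $(1+x)^{-\lambda}$ is
\begin{equation*}
	\int_0^{\infty}x^{-z-1}(1+x)^{-\lambda}\dx=\frac{\Gamma(-z)\Gamma(\lambda+z)}{\Gamma(\lambda)}.
\end{equation*}
Mellin inversion on a vertical line $\Re(z)=c$ with $-\Re(\lambda)<c<0$ then gives
\begin{equation*}
	(1+x)^{-\lambda}=\frac{1}{2i\pi\Gamma(\lambda)}\int_{(c)}\Gamma(\lambda+z)\Gamma(-z)x^{z}\dz.
\end{equation*}
Inversion is legitimate because Lemma \ref{Stirling} (item 3) shows that the integrand decays like $e^{-\pi\abs{t}}$ times a power of $\abs{t}$, so the integral converges absolutely and locally uniformly. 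Multiplying by $A^{-\lambda}$ and using $A^{-\lambda}x^z=A^{-\lambda-z}B^z$ gives \eqref{(A+B)lambda}.

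To reach general $\lambda\in\C$, I would let the contour be any path that separates the poles $z=-\lambda-n$ of $\Gamma(\lambda+z)$ from the poles $z=n$ of $\Gamma(-z)$. Both sides are then analytic in $\lambda$: the exponential decay is uniform for $\lambda$ in compact sets, so the right-hand side is analytic. The identity therefore extends by analytic continuation, and Cauchy's theorem shows that the value does not depend on the particular separating path. At poles of $\Gamma(\lambda)$ both sides are read with $1/\Gamma(\lambda)$ as an entire function.

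For \eqref{log(A+B)(A+B)lambda}, I would differentiate \eqref{(A+B)lambda} with respect to $\lambda$. The left side gives $-\log(A+B)(A+B)^{-\lambda}$. On the right I differentiate under the integral sign, which the uniform exponential decay permits; this decay persists for $\Gamma'$ by Lemma \ref{Stirling} (item 4), where only an extra $\log\abs{t}$ factor appears. Three terms arise:
\begin{itemize}
	\item the derivative of $1/\Gamma(\lambda)$, which is $-\Gamma'(\lambda)/\Gamma(\lambda)^2$;
	\item the derivative of $\Gamma(\lambda+z)$, which is $\Gamma'(\lambda+z)$;
	\item the derivative of $A^{-\lambda-z}$, which is $-\log(A)A^{-\lambda-z}$.
\end{itemize}
Multiplying the result by $-1$ gives exactly the three integrals in \eqref{log(A+B)(A+B)lambda}, with the signs as stated.

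The main obstacle is justifying the interchanges, namely differentiation under the integral sign and the continuation in $\lambda$, uniformly in $\lambda$ while keeping a fixed contour. I would handle this by taking $\lambda$ in a small disc, on which a single separating contour works, and then applying the Stirling bounds.
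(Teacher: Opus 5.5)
Your proof is correct, and for the logarithmic identity \eqref{log(A+B)(A+B)lambda} it is the paper's argument. You differentiate \eqref{(A+B)lambda} in $\lambda$ under the integral sign, and your sign bookkeeping reproduces exactly the three stated terms.

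For \eqref{(A+B)lambda} itself, the paper gives no argument and simply cites Whittaker--Watson, Corollary 14.5.1. That result comes out of Barnes' contour-integral treatment of the hypergeometric function, where summing the residues at $z=0,1,2,\dots$ recovers the binomial series for $(1+B/A)^{-\lambda}$. You instead give a self-contained derivation in three steps:
\begin{itemize}
\item the beta integral as the Mellin transform of $(1+x)^{-\lambda}$ on $-\Re(\lambda)<\Re(z)<0$;
\item Mellin inversion, justified by the Stirling decay;
\item analytic continuation in $\lambda$, using curved separating contours and Cauchy's theorem.
\end{itemize}
A side benefit of your route is that it makes explicit something the paper's notation $\int_{(c)}$ glosses over. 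A vertical line separates the two families of poles only when $\Re(\lambda)>0$; for other $\lambda$ a deformed path is needed. The paper only uses the case $\lambda=s$ with $\Re(s)>0$.

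One small inaccuracy, which does not affect the argument: for $\lambda\in\{0,-1,-2,\dots\}$ the poles $z=-\lambda-n$ and $z=m$ coincide. So no separating path exists there, and the right-hand side is simply undefined at those points. Reading $1/\Gamma(\lambda)$ as an entire function does not repair this. These values are excluded by the lemma's own hypothesis, or can be recovered as a limit.
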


\begin{preuve}
Equation \eqref{(A+B)lambda} can be proved using \cite{Whittaker}*{Corollary~14.5.1}. Equation \eqref{log(A+B)(A+B)lambda} is obtained by differentiating \eqref{(A+B)lambda} with respect to $\lambda$.  	
\end{preuve}

\begin{lemme}
	Define for $n\geq 1$, $a\in\C$ and $k\in\{0,1,2\}$
\begin{equation*}
	\sigma_a^{(k)}(n)=\sum_{d\mid n}d^{a}\log(d)^k.
\end{equation*}
Then for $r\in\N^*$, $a\neq 0$ and $s\in\C$ with $\Re(s)>\max(1,\Re(a)+1)$ 
\begin{equation}\label{sumsigma_a}
	\sum_{n=1}^{\infty}\frac{\sigma_{a}^{(k)}(nr)}{n^s}=\frac{\partial^k}{\partial a^k}\left(\zeta(s)\zeta(s-a)h(a,r,s)\right),
\end{equation}
where 
\begin{equation*}
	h(a,r,s)= \prod_{p\mid r}(1-p^{a})^{-1}\left(1-p^{-(s-a)}-p^{a(v_p(r)+1)}+p^{-s+a(v_p(r)+1)}\right).
\end{equation*}
The function $s\mapsto h(a,r,s)$ is entire and satisfies for $\Re(s)>0$, $k\in\{0,1,2\}$ 
\begin{equation}\label{h(a,r,s)}
	\left.\frac{\partial^k}{\partial a^k}h(a,r,s)\right\rvert_{a=-1}\ll \prod_{p\mid r}\left(1+\frac{1}{p-1}\right)=:h(r)
\end{equation}
 
\end{lemme}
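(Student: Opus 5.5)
The plan has three parts: verify \eqref{sumsigma_a} by an Euler product computation, put the local factor of $h$ into a form that is visibly entire in $s$, and read off \eqref{h(a,r,s)} from that form at $a=-1$.

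\textbf{Identity \eqref{sumsigma_a}.} It suffices to treat $k=0$. Since $\sigma_a^{(k)}(m)=\frac{\partial^k}{\partial a^k}\sigma_a(m)$, and the series $\sum_n \sigma_a(nr)n^{-s}$ converges locally uniformly in $a$ for $\Re(s)>\max(1,\Re(a)+1)$, we may differentiate term by term. For $k=0$ the function $n\mapsto\sigma_a(nr)$ is not multiplicative in $n$, but it factors over primes once $r$ is fixed. Writing $v=v_p(r)$, we get
\begin{equation*}
	\sum_{n\geq 1}\frac{\sigma_a(nr)}{n^s}=\prod_{p\nmid r}\sum_{j\geq 0}\frac{\sigma_a(p^j)}{p^{js}}\prod_{p\mid r}\sum_{j\geq 0}\frac{\sigma_a(p^{j+v})}{p^{js}}.
\end{equation*}
For $p\nmid r$ the local factor is $(1-p^{-s})^{-1}(1-p^{-(s-a)})^{-1}$, which produces $\zeta(s)\zeta(s-a)$ up to the factors at $p\mid r$. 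For $p\mid r$ we insert $\sigma_a(p^m)=(1-p^{a(m+1)})/(1-p^a)$ and sum the two geometric series. Multiplying by $(1-p^{-s})(1-p^{-(s-a)})$ should give exactly the local factor of $h(a,r,s)$. This is a finite algebraic check.

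\textbf{Entireness of $h$.} Let $S_m=\sum_{i=0}^m p^{ai}$. Then each local factor equals
\begin{equation*}
	\frac{1-p^{a(v+1)}}{1-p^a}-p^{-(s-a)}\,\frac{1-p^{av}}{1-p^{a}}=S_v-p^{-(s-a)}S_{v-1}.
\end{equation*}
This is a polynomial in $p^{a}$ and $p^{-s}$. So $s\mapsto h(a,r,s)$ is a finite product of entire functions, and the apparent singularity at $a=0$ is removable.

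\textbf{Bound \eqref{h(a,r,s)}.} At $a=-1$ the local factor becomes
\begin{equation*}
	1+(1-p^{-s})x_p,\qquad x_p=\sum_{i=1}^{v}p^{-i}\leq \frac{1}{p-1}.
\end{equation*}
For $k=1,2$ we differentiate $S_v-p^{-(s-a)}S_{v-1}$ in $a$ and set $a=-1$. Each derivative only inserts weights $i\log p$ or $(i\log p)^2$ into the sums. The resulting local pieces are bounded by $\ll (\log p)^k/p$, and by $\ll p^{-1}$ in the leading term. Leibniz's rule then expresses $\partial_a^k h$ as a sum of at most $\omega(r)^k$ products, each consisting of undifferentiated factors times at most two differentiated ones.

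\textbf{Main obstacle.} The hard step is uniformity in $r$ and in complex $s$ with $\Re(s)>0$. The trivial estimate $|1-p^{-s}|<2$ only gives $|1+(1-p^{-s})x_p|\leq 1+2x_p$, and its product is comparable to $h(r)^2$, not $h(r)$. To reach the stated bound I would first write the factor as $(1+x_p)-p^{-s}x_p$ and divide by $1+x_p\leq 1+\frac{1}{p-1}$. This reduces the problem to showing that
\begin{equation*}
	\prod_{p\mid r}\left|1-\frac{p^{-s}x_p}{1+x_p}\right|
\end{equation*}
is bounded in the way the later applications need. At the moment I can only see this cleanly when $s$ is real, where every factor is at most $1$. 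For complex $s$ I expect an extra factor $\prod_{p\mid r}(1+\frac{2}{p})\ll \log\log r$ to appear. The $\log p$-weighted derivative terms add a further $\sum_{p\mid r}\log p/p\ll\log\log r$ each.

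If this extra factor cannot be removed, I would state the lemma with $h(r)^{O(1)}$ in place of $h(r)$. I would then check that such a factor is absorbed by the $X^{\eps}$ losses wherever \eqref{h(a,r,s)} is used later in the paper. I flag this step as the place where the proposal might fall short of the statement exactly as worded.
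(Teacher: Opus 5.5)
Your verification of \eqref{sumsigma_a} is the paper's argument: an Euler product over the multiples of $r$, geometric series at $p\mid r$, then $\partial_a^k$. Your rewriting of the local factor as $S_v-p^{-(s-a)}S_{v-1}$ is an improvement. The paper only asserts that $h$ is entire, and never treats the values $a\neq 0$ with $p^{a}=1$, where its displayed formula for $h$ reads $0/0$.

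The step you flag is not a gap you could have closed: \eqref{h(a,r,s)} is false uniformly in $r$, and the paper's proof of it is wrong. For $k=1,2$ the paper bounds $\partial_a^k$ of the product over $p\mid r$ by products over $p\mid r$ of differentiated local factors. Leibniz's rule instead gives a sum over which factor is differentiated, and that sum is exactly your $\sum_{p\mid r}\log p/p$.

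Concretely, take $s=1$ and $r=\prod_{p\le x}p$. The local factor is then $f_p(a)=1+p^{a}-p^{a-1}$, with $f_p(-1)=1+p^{-1}-p^{-2}\ge (1-2p^{-2})(1-p^{-1})^{-1}$ and $f_p'(-1)=(p^{-1}-p^{-2})\log p>0$. Hence
\begin{equation*}
\partial_a h(a,r,1)\vert_{a=-1}=h(-1,r,1)\sum_{p\le x}\frac{f_p'(-1)}{f_p(-1)}\gg h(r)\sum_{p\le x}\frac{\log p}{p}\gg h(r)\log x\asymp h(r)\log\log r,
\end{equation*}
and the same computation gives $\gg h(r)(\log x)^2$ for $k=2$.

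For $k=0$ your real-$s$ argument is fine, but uniformity over $\Re(s)>0$ also fails. By Kronecker's theorem, choose $t$ with $p^{-it}$ close to $-1$ for every $p\le x$. Then, as $\Re(s)\to 0^{+}$, $\lvert h(-1,r,s)\rvert$ approaches $\prod_{p\le x}(1+2/p)\asymp h(r)\log x$.

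What is true follows from your own decomposition plus one remark: $x_p/(1+x_p)\le 1/p$.
\begin{itemize}
\item For $k=0$ and $\Re(s)\ge c>0$ this gives $\bigl\lvert 1-\frac{p^{-s}x_p}{1+x_p}\bigr\rvert\le 1+p^{-1-c}$, hence $\lvert h(-1,r,s)\rvert\le \zeta(1+c)\,h(r)$. So no extra factor is needed once $\Re(s)$ is bounded below.
\item Near $\Re(s)=0$ the right extra factor is $\prod_{p\mid r}(1+\frac1p)\ll\log\log r$. It is not $\prod_{p\mid r}(1+\frac2p)$, which can be as large as $(\log\log r)^2$.
\item For $k=1,2$, bound each undifferentiated factor by $(1+\frac{1}{p-1})(1+p^{-1-c})\ge 1$ and use $\sum_{p\mid r}(\log p)^j/p\ll(\log\log 10r)^j$. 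Your Leibniz expansion then gives $\ll_c h(r)(\log\log 10r)^k$, which the example above shows is sharp.
\end{itemize}
The bound is only used in the proof of Proposition \ref{boundFMTB}, at $\Re(-z)=1-\delta$. There these $\log\log$ losses are absorbed by the $\log(X)^{\eps}$ already present. So the weakened statement you propose is the correct repair, and the paper's conclusions survive.
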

\begin{preuve}
We start with the case $k=0$ and write $\sigma_a$ for $\sigma_a^{(0)}$. It is well known that
\begin{equation*}
	\sum_{n=1}^{\infty}\frac{\sigma_a(n)}{n^s}=\zeta(s)\zeta(s-a)=\prod_{p}(1-p^{-s})^{-1}(1-p^{-(s-a)})^{-1}.
\end{equation*}
Let $r=\prod_{p\mid r}p^{v_p(r)}$. We have
\begin{align*}
	\sum_{n=1}^{\infty}\frac{\sigma_{a}(nr)}{(nr)^s}&=\prod_{(p,r)=1}(1-p^{-s})^{-1}(1-p^{-(s-a)})^{-1}\prod_{p\mid r}\left(\sum_{k=v_p(r)}^{\infty}\frac{\sigma_a(p^{k})}{p^{ks}}\right)\\
	&=\prod_{(p,r)=1}(1-p^{-s})^{-1}(1-p^{-(s-a)})^{-1}\prod_{p\mid r}p^{-v_p(r)s}\sum_{k=0}^{\infty}\frac{\sigma_a(p^{k+v_p(r)})}{p^{ks}}\\
	&=r^{-s}\zeta(s)\zeta(s-a)\prod_{p\mid r}(1-p^{-s})(1-p^{-(s-a)})\sum_{k=0}^{\infty}\frac{\sigma_a(p^{k+v_p(r)})}{p^{ks}}.
\end{align*}
We can compute the $k$-sum using geometric series 
\begin{equation*}
	\sum_{k=0}^{\infty}\frac{\sigma_a(p^{k+v_p(r)})}{p^{ks}}=(1-p^{a})^{-1}\left(\frac{1}{1-p^{-s}}-\frac{p^{a(v_p(r)+1)}}{1-p^{-(s-a)}}\right).
\end{equation*}
Hence,
\begin{align*}
	\sum_{n=1}^{\infty}\frac{\sigma_{a}(nr)}{n^s}&=\zeta(s)\zeta(s-a)\prod_{p\mid r}(1-p^{a})^{-1}\left(1-p^{-(s-a)}-p^{a(v_p(r)+1)}+p^{-s+a(v_p(r)+1)}\right)\\
	&=\zeta(s)\zeta(s-a)h(a,r,s),
\end{align*}
which proves \eqref{sumsigma_a} for $k=0$. To prove \eqref{sumsigma_a} for $k\in\{1,2\}$ we just need to see that 
\begin{equation*}
	\sigma_a^{(k)}(n)=\frac{\partial^k}{\partial a^k}\sigma_a(n).
\end{equation*}
We now check that 
\begin{equation*}
	h(-1,r,s)\ll \prod_{p\mid r}\frac{1}{1-p^{-1}}=\prod_{p\mid r}\left(1+\frac{1}{p-1}\right),
\end{equation*}
\begin{equation*}
	\left.\frac{\partial}{\partial a}h(a,r,s)\right\rvert_{a=-1}\ll \prod_{p\mid r}\frac{\log(p)p^{-1}}{(1-p^{-1})^2}+ \prod_{p\mid r}\left(1+\frac{1}{p-1}\right)\ll \prod_{p\mid r}\left(1+\frac{1}{p-1}\right)
\end{equation*}
and 
\begin{equation*}
	\left.\frac{\partial^2}{\partial a^2}h(a,r,s)\right\rvert_{a=-1}\ll \prod_{p\mid r}\frac{\log(p)^2p^{-1}}{(1-p^{-1})^2}+\prod_{p\mid r}\frac{2\log(p)p^{-2}}{(1-p^{-1})^3}+\prod_{p\mid r}\left(1+\frac{1}{p-1}\right)\ll \prod_{p\mid r}\left(1+\frac{1}{p-1}\right),
\end{equation*}
which proves \eqref{h(a,r,s)}. 
\end{preuve}
In the next Lemma we bound $h(n)$ defined in \eqref{h(a,r,s)}.
\begin{lemme}\label{borneh(n))}
We have for $n\geq 10$
\begin{equation*}
h(n)\ll \log(\log(n)). 
\end{equation*}
\end{lemme}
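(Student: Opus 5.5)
We need to show that $h(n)=\prod_{p\mid n}\bigl(1+\frac{1}{p-1}\bigr)=\prod_{p\mid n}(1-p^{-1})^{-1}=\frac{n}{\phi(n)}$ satisfies $h(n)\ll\log\log n$. The plan is to reduce to the case where $n$ is a product of the first primes and then apply Mertens' theorem.

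\emph{Step 1 (reduction).} Let $\omega(n)$ be the number of distinct prime factors of $n$, and let $p_1<p_2<\dots$ be the primes in increasing order. Each factor $(1-p^{-1})^{-1}$ decreases as $p$ increases. Replacing the $j$-th smallest prime divisor of $n$ by $p_j$ therefore does not decrease the product, so
\begin{equation*}
	h(n)\leq \prod_{j\leq \omega(n)}\left(1-\frac{1}{p_j}\right)^{-1}.
\end{equation*}
Moreover $p_1\cdots p_{\omega(n)}\leq n$.

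\emph{Step 2 (size of the largest prime involved).} Write $y=p_{\omega(n)}$. Then $\theta(y)=\sum_{p\leq y}\log p\leq \log n$. By Chebyshev's bound $\theta(y)\gg y$ for $y\geq 2$, this gives $y\ll \log n$.

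\emph{Step 3 (Mertens).} Mertens' third theorem gives $\prod_{p\leq y}(1-p^{-1})^{-1}= e^{\gamma}\log y\,(1+o(1))$, and in particular this product is $\ll \log(y+2)$ for all $y\geq 2$. Combining with Step 2 yields
\begin{equation*}
	h(n)\ll \log(C\log n)\ll \log\log n
\end{equation*}
for $n\geq 10$, since then $\log\log n\geq \log\log 10>0$ absorbs the constant. Small values of $y$ are covered by the same bound because the product is then bounded.

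There is no serious obstacle here. The only care needed is to make the constants uniform in Step 2, which the explicit Chebyshev lower bound $\theta(y)\geq c\,y$ for $y\geq 2$ handles.
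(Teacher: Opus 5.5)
Your proposal is correct and follows essentially the same route as the paper. Both arguments reduce to the case of a primorial, bound $\prod_{p\le y}(1-p^{-1})^{-1}$ by Mertens' third theorem, and use $\theta(y)\asymp y$ to get $y\ll\log n$. Your version is slightly more careful on two points. You justify the reduction explicitly (the $j$-th prime divisor of $n$ is at least $p_j$, and $p_1\cdots p_{\omega(n)}\le n$), whereas the paper simply asserts that $h$ is maximal at primorials. You also need only Chebyshev's lower bound rather than the prime number theorem.
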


\begin{preuve}
It is clear that $h(n)$ is maximal for primorial $n$ \textit{i.e.}
\begin{equation*}
	n=\prod_{p\leq x}p
\end{equation*}
for some real number $x$. We have 
\begin{equation*}
	h(n)=\prod_{p\leq x}\left(1+\frac{1}{p-1}\right)\ll \prod_{3\leq p\leq x}\frac{1-\frac{1}{(p-1)^2}}{1-\frac{1}{p-1}}\ll \log(x),
\end{equation*}
where we used Merten's third theorem. 
By the prime number theorem we have
\begin{equation*}
	\log(n)=\sum_{p\leq x}\log(p)\sim x,
\end{equation*}
hence $\log(x)\ll \log(\log(n))$ and finally
\begin{equation*}
	h(n)\ll \log(\log(n)). 
\end{equation*}
\end{preuve}

\begin{lemme}\label{Diricraman}
	We have for $\Re(s)>1$ and $h\in\Z$ 
\begin{equation*}
	\sum_{k=1}^{\infty}\frac{c_k(h)}{k^s}=\frac{\sigma_{1-s}(h)}{\zeta(s)}.
\end{equation*}
\end{lemme}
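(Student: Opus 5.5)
The plan is to use the standard identity $c_k(h)=\sum_{d\mid (k,h)}d\,\mu(k/d)$ and then factor the Dirichlet series as a product of two absolutely convergent series. We first treat $h\neq 0$ and handle $h=0$ separately, since $\sigma_{1-s}(0)$ needs a convention.

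\textbf{Case $h\neq 0$.} For $\Re(s)>1$ we have $\abs{c_k(h)}\leq \sum_{d\mid (k,h)}d\leq \sigma_1(h)$, which is bounded in $k$. Hence the series converges absolutely, and every rearrangement below is justified. Writing $k=dm$ with $d\mid h$, we get
\begin{equation*}
	\sum_{k=1}^{\infty}\frac{c_k(h)}{k^s}=\sum_{d\mid h}\sum_{m=1}^{\infty}\frac{d\,\mu(m)}{d^sm^s}=\sum_{d\mid h}d^{1-s}\sum_{m=1}^{\infty}\frac{\mu(m)}{m^s}=\frac{\sigma_{1-s}(h)}{\zeta(s)}.
\end{equation*}
Here we used $\sum_{m\geq 1}\mu(m)m^{-s}=\zeta(s)^{-1}$ for $\Re(s)>1$, and $\sigma_{1-s}(h)=\sum_{d\mid h}d^{1-s}$ with $\sigma_a$ as defined earlier in the paper.

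\textbf{The identity for $c_k(h)$.} I would also justify $c_k(h)=\sum_{d\mid(k,h)}d\,\mu(k/d)$ briefly, rather than just cite it. Detect the coprimality condition $(a,k)=1$ by Möbius inversion:
\begin{equation*}
	c_k(h)=\sum_{a\,(k)}e\left(\frac{ha}{k}\right)\sum_{e\mid (a,k)}\mu(e)=\sum_{e\mid k}\mu(e)\sum_{b\,(k/e)}e\left(\frac{hb}{k/e}\right).
\end{equation*}
The inner sum equals $k/e$ when $(k/e)\mid h$ and vanishes otherwise. Setting $d=k/e$ gives the claimed formula.

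\textbf{Case $h=0$.} Here $c_k(0)=\phi(k)$, so $\sum_{k}\phi(k)k^{-s}=\zeta(s-1)/\zeta(s)$. This matches $\sigma_{1-s}(0)=\sum_{d\geq 1}d^{1-s}=\zeta(s-1)$, under the usual convention that every $d$ divides $0$. Note that absolute convergence now requires $\Re(s)>2$. For $1<\Re(s)\leq 2$ the identity should be read via this convention and analytic continuation, or else the lemma should be restricted to $h\neq 0$. I expect this convention issue for $h=0$ to be the only delicate point; the case $h\neq 0$ is routine.
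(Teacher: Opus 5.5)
Your proof is correct and is essentially the paper's argument. The paper's one-line proof writes $c_k(h)=\sum_{d\mid k}\mu(k/d)u_d$, with $u_d=d$ if $d\mid h$ and $0$ otherwise, so the Dirichlet series factors as $\zeta(s)^{-1}\sum_{d\mid h}d^{1-s}$, exactly as in your computation. Your additional remarks are accurate: absolute convergence follows from $\abs{c_k(h)}\leq\sigma_1(h)$, and for $h=0$ the series converges only for $\Re(s)>2$. The $h=0$ caveat is harmless here, since the paper applies the lemma only with $h=lr\geq 1$.
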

\begin{preuve}
This follows from the formula $c_k(n)=\sum_{d\mid k}\mu\left(\frac{k}{d}\right)u_d$ where $u_d=d$ if $d\mid n$ and $0$ otherwise. 	
\end{preuve}

The next lemma concerns the Mellin transform of Bessel functions. Background on Bessel functions can be found in \cite{Treatise}. We recall their definition here.  
The function $J_{\nu}$ is the Bessel function of the first kind defined by
\begin{equation*}
	\forall \nu\in\C,\forall x>0,\quad J_{\nu}(x)=\sum_{m=0}^{\infty}\frac{(-1)^m}{m!\Gamma(m+\nu+1)}\left(\frac{x}{2}\right)^{2m+\nu}.
\end{equation*}
The function $Y_0$ is the Bessel function of the second kind defined by
\begin{equation}\label{defY0}
	\forall x>0,\quad Y_0(x)= \lim_{\substack{\nu\to 0\\ \hspace{0.22cm} \nu\neq 0\phantom{-}}}\frac{J_{\nu}(x)\cos(\pi\nu)-J_{-\nu}(x)}{\sin(\pi\nu)}.
\end{equation}
The function $K_0$ is the modified Bessel function of the second kind defined by
\begin{equation}\label{defK0}
	\forall x>0,\quad K_{0}(x)=\lim_{\substack{\nu\to 0\\ \hspace{0.22cm} \nu\neq 0\phantom{-}}}\frac{\pi}{2}\left(\frac{I_{-\nu}(x)-I_{\nu}(x)}{\sin(\pi\nu)}\right),
\end{equation}
where for $x>0$, $\nu\in\C$
\begin{equation*}
	I_{\nu}(x)=\sum_{m=0}^{\infty}\frac{1}{m!\Gamma(\nu+m+1)}\left(\frac{x}{2}\right)^{2m+\nu}.
\end{equation*}
\begin{lemme}\label{mellinbessel}
For $x>0$ and $k\geq 1$ 
\begin{equation*}
	\left(\frac{x}{2}\right)^{-(k-1)}J_{k-1}(x)=\frac{1}{2i\pi}\int_{(\sigma)}\frac{\Gamma(s)}{\Gamma(k-s)}\left(\frac{x}{2}\right)^{-2s}\ds,\quad 0<\sigma<\frac{k}{2}. 
\end{equation*}
For $s\in\C$, $\Re(s)\in ]0;\frac{3}{2}[$
\begin{equation*}
	\int_0^{\infty}Y_0(x)x^{s-1}\dx=-\frac{2^{s-1}}{\pi}\cos\left(\frac{\pi s}{2}\right)\Gamma\left(\frac{s}{2}\right)^2,
\end{equation*}
the integral being absolutely convergent for $\Re(s)\in ]0,\frac{1}{2}[$.\smallbreak
\noindent 
Finally, for  $s\in\C$, $\Re(s)>0$
\begin{equation*}
	\int_0^{\infty}K_0(x)x^{s-1}\dx=2^{s-2}\Gamma\left(\frac{s}{2}\right)^2,
\end{equation*}
the integral being absolutely convergent for $\Re(s)>0$.\smallbreak
\noindent
\end{lemme}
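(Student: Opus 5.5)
We prove the three Mellin formulas separately. Each rests on a standard integral representation together with Mellin inversion or Mellin–Barnes residue calculus.

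\emph{The $J_{k-1}$ formula.} Define $G(s)=\Gamma(s)/\Gamma(k-s)$ and $y=x/2$. We shift the contour of $\frac{1}{2i\pi}\int_{(\sigma)}G(s)y^{-2s}\ds$ to the left and collect residues. The poles of $G$ in $\Re(s)<\sigma$ are the poles of $\Gamma(s)$ at $s=-m$, $m\geq 0$, with residue $\frac{(-1)^m}{m!\,\Gamma(k+m)}y^{2m}$. Summing them gives $\sum_{m\geq 0}\frac{(-1)^m}{m!\Gamma(m+k)}y^{2m}=y^{-(k-1)}J_{k-1}(2y)$, by the series definition of $J_{k-1}$.

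To justify the shift, we use Stirling (Lemma \ref{Stirling}): as in Lemma \ref{estimF}, $\abs{G(\sigma+it)}\asymp(1+\abs{t})^{2\sigma-k}$. The integral therefore converges absolutely for $\sigma<\frac{k-1}{2}$. Near $\frac{k}{2}$ it converges only conditionally, thanks to the oscillating factor $\abs{t}^{2it}$, and the stated range $0<\sigma<\frac{k}{2}$ is understood in this conditional sense.

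Shifting to $\Re(s)=-M-\frac12$ does not converge, since $G$ grows like $\abs{t}^{-2M-1-k}$, which is fine, but $\Gamma(s)/\Gamma(k-s)$ also grows factorially in $M$ along the real direction. The cleaner route is the reverse direction: compute the Mellin transform of $y^{-(k-1)}J_{k-1}(2y)$ directly and invert. We would use the known formula $\int_0^\infty J_\nu(x)x^{s-1}\dx=2^{s-1}\Gamma(\frac{\nu+s}{2})/\Gamma(\frac{\nu-s}{2}+1)$ (Watson \cite{Treatise}), valid for $-\Re\nu<\Re s<\frac32$. Substituting $\nu=k-1$ and the rescaling, then applying Mellin inversion, gives the claim on vertical lines where the transform is integrable. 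The boundary cases are handled by continuity, using conditional convergence.

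\emph{The $K_0$ formula.} We use $K_0(x)=\int_0^\infty e^{-x\cosh u}\,\mathrm{d}u$, which follows from \eqref{defK0}. Then Fubini gives $\int_0^\infty K_0(x)x^{s-1}\dx=\Gamma(s)\int_0^\infty(\cosh u)^{-s}\,\mathrm{d}u$. The substitution $v=\cosh^{-2}u$ turns the last integral into a Beta integral equal to $\frac12B(\frac s2,\frac12)$. The duplication formula then yields $2^{s-2}\Gamma(s/2)^2$. Absolute convergence for $\Re(s)>0$ follows from $K_0(x)\ll\abs{\log x}$ near $0$ and exponential decay at infinity.

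\emph{The $Y_0$ formula.} We write $Y_0$ via \eqref{defY0} as the limit $\nu\to0$ of $\frac{J_\nu\cos\pi\nu-J_{-\nu}}{\sin\pi\nu}$. We then insert Watson's Mellin transform of $J_{\pm\nu}$ for $\abs{\Re\nu}<\Re s<\frac32$. The resulting expression is $\frac{2^{s-1}}{\sin\pi\nu}\bigl[\cos\pi\nu\,\frac{\Gamma(\frac{s+\nu}{2})}{\Gamma(1-\frac{s-\nu}{2})}-\frac{\Gamma(\frac{s-\nu}{2})}{\Gamma(1-\frac{s+\nu}{2})}\bigr]$. Using the reflection formula $\Gamma(z)\Gamma(1-z)=\pi/\sin\pi z$, this simplifies to $-\frac{2^{s-1}}{\pi}\Gamma(\frac{s+\nu}2)\Gamma(\frac{s-\nu}2)\cos\bigl(\frac{\pi(s-\nu)}{2}\bigr)$. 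Letting $\nu\to0$ gives the stated formula.

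\emph{Main obstacle.} The delicate point is interchanging the limit $\nu\to0$ with the integral, since the Mellin integral of $J_\nu$ is only conditionally convergent when $\Re s\geq\frac12$. We would first work in $0<\Re s<\frac12$, where $Y_0(x)\ll x^{-1/2}$ at infinity and $Y_0(x)\ll\abs{\log x}$ at $0$. There dominated convergence applies, using bounds for $J_{\pm\nu}$ uniform in small $\nu$. We then extend to $\Re s<\frac32$ by analytic continuation of the conditionally convergent integral, which holds because of the oscillation $\sin(x-\pi/4)x^{-1/2}$ in the asymptotic of $Y_0$.
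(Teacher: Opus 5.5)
Your proposal is correct in substance but takes a different, more self-contained route than the paper. The paper proves this lemma by citation. It quotes a table of integral transforms for the $J_{k-1}$ formula, and Cohen's Proposition 9.8.8 for the $Y_0$ transform on $0<\Re(s)<1$. It then uses $Y_0(x)\sim\frac{2}{\pi}\log x$ at $0$ and $Y_0(x)\ll x^{-1/2}$ at $\infty$ to get absolute convergence on $0<\Re(s)<\frac12$ and the continuation to $\Re(s)<\frac32$; the $K_0$ case is declared identical.

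You derive the formulas instead. Your $K_0$ computation ($\int_0^\infty e^{-x\cosh u}\,\mathrm{d}u$, Fubini, the Beta integral, duplication) is complete. Your $Y_0$ algebra is right; it even gives the Mellin transform of $Y_\nu$ for small $\nu\neq0$. The cost is a dependence on Watson's formula for $\int_0^\infty J_\nu(x)x^{s-1}\,\mathrm{d}x$, and a limit interchange the paper never needs. For that interchange, dominated convergence requires bounds on $Y_\nu=(J_\nu\cos\pi\nu-J_{-\nu})/\sin\pi\nu$ itself, uniform for $\abs{\nu}\le\eta<\Re(s)$. For example, $\ll x^{-\eta}(1+\abs{\log x})$ on $]0,1]$ and $\ll x^{-1/2}$ on $[1,\infty[$ suffice. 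Bounds on $J_{\pm\nu}$ alone only give $O(1/\abs{\nu})$ once you divide by $\sin\pi\nu$.

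The one genuine mistake is your reason for dropping the residue argument for $J_{k-1}$. The ratio $\Gamma(s)/\Gamma(k-s)$ does not grow as $\Re(s)\to-\infty$; it decays factorially. For $s=-M-\frac12+it$ one has $\abs{\sin\pi s}=\cosh\pi t$. Also $\abs{\Gamma(N+\frac12+it)}^2=\Gamma(N+\frac12)^2(\cosh\pi t)^{-1}\prod_{m=0}^{N-1}\bigl(1+t^2/(m+\frac12)^2\bigr)$. The reflection formula then gives
\[
\abs*{\frac{\Gamma(s)}{\Gamma(k-s)}}\leq\frac{\pi}{\Gamma(M+\frac32)\,\Gamma(M+k+\frac12)\,(1+4t^2)}.
\]
So the integral on $\Re(s)=-M-\frac12$ is $\ll y^{2M+1}/(\Gamma(M+\frac32)\Gamma(M+k+\frac12))\to0$. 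The horizontal segments at height $\pm T$ are $\ll_M T^{2\sigma-k}\to0$, because $\sigma<\frac{k}{2}$. Hence the residue computation you had already started is a complete proof with no citation. It also establishes the conditional convergence on every line $0<\sigma<\frac k2$ at the same time.

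Your fallback does not fully cover the statement. For $k=1$, $\abs{\Gamma(\sigma+it)/\Gamma(1-\sigma-it)}\asymp(1+\abs{t})^{2\sigma-1}$ is not integrable for any $\sigma\in]0,\frac12[$. So there is no line on which to apply Mellin inversion, and nothing to pass ``by continuity'' from. This is harmless for the paper, which only uses $k\geq4$, but the lemma is stated for $k\geq1$. For $k\geq2$, the passage from $\sigma<\frac{k-1}{2}$ to $\sigma<\frac k2$ should likewise be done as a contour shift with vanishing horizontal segments, not by continuity.
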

\begin{preuve}
	The first formula can be found in \cite{integraltransforms}*{Page 21}.\smallbreak
\noindent
We prove only the second one, since the proof of the third is identical. By \cite{Cohen}*{Proposition 9.8.8} we have for $\Re(s)\in ]0;1[$
\begin{equation}\label{MellinY0}
	\int_0^{\infty}Y_0(x)x^{s-1}\dx=-\frac{2^{s-1}}{\pi}\cos\left(\frac{\pi s}{2}\right)\Gamma\left(\frac{s}{2}\right)^2.
\end{equation}
Now we use
\begin{equation*}
	Y_0(x)\underset{x\rightarrow 0^+}{\sim}\frac{2}{\pi}\log(x)\text{ and } Y_0(x)\ll x^{-\frac{1}{2}},
\end{equation*}
(see \cite{Cohen}*{Proposition 9.8.1} and \cite{Cohen}*{Proposition 9.8.7}) which shows that the integral in \eqref{MellinY0} converges absolutely for $\Re(s) \in ]0,\frac{1}{2}[$ and that the formula extends to the range $\Re(s) \in ]0,\frac{3}{2}[$.
\end{preuve}
We now recall Parseval's theorem for Mellin transform. This result can be found in \cite{Indextransforms}*{Theorem 1.17}. 
\begin{proposition}\label{Parseval}
	Let $f,h:]0:+\infty[\rightarrow \R$ be two functions, we assume the Mellin transform of $f$; defined by $\mathcal{M}_f(s):=\int_0^{\infty}f(x)x^{s-1}\dx$ exists and is holomorphic in a strip $\{a<\Re(s)<b\}$. Then for any $c\in ]a;b[$ such that
\begin{equation*}
	t\mapsto\mathcal{M}_f(c+it)\in L^2(\R)\text{ and }\int_0^{\infty}x^{2-2c}\abs{h(x)}^2\frac{\dx}{x}<+\infty,
\end{equation*}
we have for $x>0$
\begin{equation*}
		\frac{1}{2i\pi}\int_{(c)}\mathcal{M}_f(s)\mathcal{M}_h(1-s)x^{-s}\ds=\int_0^{\infty}f(xt)h(t)\dt. 
	\end{equation*}
\end{proposition}
We now state some results regarding the Fourier coefficients of $f$. We start by recalling two well known and very deep results : Deligne's bound (deduces from \cite{Deligne}) and Rankin-Selberg's estimate (see \cite{Rankin} and \cite{Selberg}). 
\begin{thm}\label{Deligne}
We have the individual bound
\begin{equation*}
	\forall \eps>0,\quad \abs{a(n)}\ll n^{\eps}, 
\end{equation*}
and the Rankin-Selberg estimate
\begin{equation*}
	\sum_{n\leq x}a(n)^2=cx+O(x^{\frac{3}{5}}),
\end{equation*}
where $c=\Res\left(L(f\otimes f,s=1\right)$.
\end{thm}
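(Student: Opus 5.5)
Both parts of this statement are classical, so the plan is to reduce each one to a deep input that is cited rather than reproved.

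\textbf{The individual bound.} I would start from Deligne's proof of the Ramanujan--Petersson conjecture, $\abs{a(p)}\leq 2$ for every prime $p$ in the normalisation $a(1)=1$ used here. The Hecke relations give multiplicativity, $a(mn)=a(m)a(n)$ for $(m,n)=1$, and the recursion $a(p^{j+1})=a(p)a(p^j)-a(p^{j-1})$. Writing $a(p)=\alpha_p+\beta_p$ with $\alpha_p\beta_p=1$ and $\abs{\alpha_p}=\abs{\beta_p}=1$, this recursion yields $a(p^j)=\sum_{i=0}^{j}\alpha_p^{i}\beta_p^{j-i}$, hence $\abs{a(p^j)}\leq j+1=\tau(p^j)$. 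By multiplicativity $\abs{a(n)}\leq \tau(n)$ for all $n$, and the classical divisor bound $\tau(n)\ll_{\eps} n^{\eps}$ finishes this part.

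\textbf{The Rankin--Selberg estimate.} I would follow Rankin and Selberg.
\begin{enumerate}
\item Unfold $\int_{\SL_2(\Z)\backslash\mathbb{H}} y^k\abs{f(z)}^2E(z,s)\,\frac{\d x\,\d y}{y^2}$, where $E(z,s)$ is the real-analytic Eisenstein series. This gives
\begin{equation*}
(4\pi)^{-(s+k-1)}\Gamma(s+k-1)\sum_{n\geq1}\frac{a(n)^2}{n^{s}}
\end{equation*}
for $\Re(s)>1$.
\item Use the meromorphic continuation and the functional equation of $E(z,s)$ (whose only pole in $\Re(s)\geq\frac{1}{2}$ is a simple pole at $s=1$ with constant residue). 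This shows that $L(f\otimes f,s)=\zeta(2s)\sum_n a(n)^2n^{-s}$ continues meromorphically to $\C$, has only a simple pole at $s=1$ in $\Re(s)>\frac{1}{2}$ with residue a positive multiple of the Petersson norm $\langle f,f\rangle$, and satisfies a functional equation $s\leftrightarrow 1-s$ with gamma factor of degree $4$.
\item Apply a Landau-type lattice-point theorem to the Dirichlet series $\sum_n a(n)^2 n^{-s}=L(f\otimes f,s)/\zeta(2s)$. Concretely, I would smooth the sharp cutoff over a window of length $y$, shift the contour to $\Re(s)=-\eps$, and bound the shifted integral by means of the functional equation and Phragm\'en--Lindel\"of convexity; the edge error is $\ll y\,x^{\eps}$ by the first part. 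Balancing $y$ gives the error $x^{(d-1)/(d+1)}=x^{3/5}$ for degree $d=4$.
\end{enumerate}
With $c=\Res(L(f\otimes f,s),s=1)$ this yields $\sum_{n\leq x}a(n)^2=cx+O(x^{\frac{3}{5}})$. (Strictly, the residue of the partial sum is that of $L(f\otimes f,s)/\zeta(2s)$; this is exactly $c$ if, as is conventional, $L(f\otimes f,s)$ is taken to mean $\sum_n a(n)^2n^{-s}$, and otherwise $c$ carries the factor $\zeta(2)^{-1}$.)

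\textbf{Main obstacle.} The only genuinely hard step is Deligne's theorem, which is quoted rather than reproved. In the second part, the point needing care is the factor $1/\zeta(2s)$ in Landau's argument, because of its poles at the zeros of $\zeta(2s)$ on $\Re(s)=\frac{1}{4}$. I would handle this by applying Landau's method to $L(f\otimes f,s)$ itself, with coefficients $b(n)=\sum_{m^2\mid n}a(n/m^2)^2$, and then M\"obius-inverting over the squares $m^2$. This inversion costs only $O(x^{1/2+\eps})$, which is smaller than $x^{3/5}$.
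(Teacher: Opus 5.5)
Your proof of the individual bound is correct. Steps 1--2 of the second part are also the classical Rankin--Selberg route; the paper gives no proof at all, only citations to Deligne, Rankin and Selberg.

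The gap is in step 3: the mechanism you describe does not produce $x^{3/5}$. Suppose you smooth over a window of length $y$ and shift to $\Re(s)=-\eps$. Suppose also that you use only the bound $\abs{L(f\otimes f,-\eps+it)}\ll \abs{t}^{2+4\eps}$ coming from the functional equation, together with the decay $\ll \abs{t}^{-1}\min\bigl(1,(x/(y\abs{t}))^{j}\bigr)$ of the Mellin transform of the weight. Then the shifted integral is $\ll x^{-\eps}(x/y)^{2+4\eps}$. Balancing this against the edge term $y$ forces $y=x^{2/3}$, so you only get $O(x^{2/3+\eps})$.

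In degree $d$ this argument yields $x^{d/(d+2)}$ rather than $x^{(d-1)/(d+1)}$. For $\tau(n)$, where $d=2$, it is Dirichlet's $x^{1/2}$, not Voronoi's $x^{1/3}$. Moving the line further left only makes it worse when $d>2$, and moving it into the strip with convexity is worse too.

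The missing idea is to exploit the oscillation of the gamma-factor ratio produced by the functional equation. You must expand the dual Dirichlet series and treat each resulting integral by stationary phase. This amounts to a truncated Voronoi-type formula of length $N$: its tail is $\ll x^{1-1/d+\eps}N^{-1/d}$, its sum is bounded trivially by $x^{(d-1)/(2d)}N^{(d-1)/(2d)}$, and the two balance at $N=x^{(d-1)/(d+1)}$. Alternatively, use Landau's original device of high-order Riesz means followed by finite differencing.

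There is a second, smaller problem. Bounding the edge by $yx^{\eps}$ via Deligne's bound loses a factor $x^{\eps}$, so the argument cannot deliver the clean $O(x^{3/5})$ as stated. The classical proof, which predates Deligne, uses positivity instead. Since $a(n)^2\geq 0$, the sharp sum is sandwiched between a lower and an upper smoothed sum whose main terms differ by $O(y)$, and no pointwise bound on $a(n)$ is needed.

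Your M\"obius inversion over squares to sidestep the zeros of $\zeta(2s)$ is fine, since $\sum_m (x/m^2)^{3/5}\ll x^{3/5}$. Had you simply quoted Landau's theorem for Dirichlet series with nonnegative coefficients and a degree-$4$ functional equation, the proposal would match the paper's citation-level treatment. The problem is only that your ``concrete'' version of that theorem is too weak.
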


\begin{lemme}\label{moment1fonctionL}
	The function $L\left(f\otimes f,s\right)=\sum_{n=1}^{\infty}a(n)^2n^{-s}$, defined for $\Re(s)>1$, can be analytically continued to a meromorphic function on $\Re(s)\geq \frac{1}{2}$ with only a simple pole at $s=1$. If we assume \eqref{hypo} for some $\delta\in[0;\frac{1}{12}]$, then $L(f\otimes f,s)$ can be analytically continued to a meromorphic function on $\Re(s)\geq \frac{1}{3}+2\delta$ with only a simple pole at $s=1$. Moreover, it satisfies for $\sigma\geq\frac{1}{2}$, $T,T'>1$ and $\eps>0$
\begin{equation*}
	\int_{T}^{2T}\int_{T'}^{2T'}\abs*{L\left(f\otimes f,\sigma+it+it'\right)}\dt\dt'
 \ll \min(T,T')\left((T+T')^{1+\eps}+(T+T')^{2-\frac{3}{2}\sigma+\eps}\right)
\end{equation*} 
Let $\delta\in[0;\frac{1}{12}]$. If we assume \eqref{hypo}, then for $T,T'>1$ and $\eps>0$ we have
\begin{equation}
	\int_{T}^{2T}\int_{T'}^{2T'}\abs*{L\left(f\otimes f,\frac{1}{3}+2\delta+it+it'\right)}\dt\dt'\ll \min(T,T')(T+T')^{\frac{5}{3}-5\delta+\eps}.
\end{equation}
\end{lemme}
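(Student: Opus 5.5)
The plan is to write $L(f\otimes f,s)=\zeta(s)\,L(\mathrm{sym}^2 f,s)/\zeta(2s)$. Here $L(\mathrm{sym}^2 f,s)$ is entire (Shimura, Gelbart--Jacquet) and satisfies a degree-three functional equation. This factorisation gives the meromorphic continuation with only the simple pole at $s=1$ coming from $\zeta(s)$. The denominator $\zeta(2s)$ has no zeros for $\Re(2s)\geq 1$, so the continuation is meromorphic on $\Re(s)\geq \frac12$ unconditionally. Under \eqref{hypo}, $\zeta(2s)$ has no zeros on $\Re(s)\geq \frac13+2\delta$, so the continuation extends to that half-plane. On the lines we use, $1/\zeta(2s)$ is bounded by $\ll (1+\abs{t})^{\eps}$. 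Off the critical line $\Re(2s)=1$ this comes from standard bounds in zero-free regions. On $\Re(s)=\frac12$ itself, $1/\zeta(1+2it)\ll\log(2+\abs{t})$ by the classical zero-free region. So it suffices to bound the double integral of $\abs{\zeta(s)L(\mathrm{sym}^2f,s)}$, up to $X^{\eps}$-type losses.

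Next I reduce the double integral to a single integral. Substitute $u=t+t'$. For fixed $u$, the set of $(t,t')\in[T,2T]\times[T',2T']$ with $t+t'=u$ has length at most $\min(T,T')$, and $u$ ranges in $[T+T',2(T+T')]$. Hence
\begin{equation*}
\int_{T}^{2T}\int_{T'}^{2T'}\abs*{L(f\otimes f,\sigma+it+it')}\dt\dt'\leq \min(T,T')\int_{T+T'}^{2(T+T')}\abs*{L(f\otimes f,\sigma+iu)}\du .
\end{equation*}
It therefore remains to show $\int_U^{2U}\abs{L(f\otimes f,\sigma+iu)}\du\ll U^{1+\eps}+U^{2-\frac32\sigma+\eps}$ for $\sigma\geq\frac12$. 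Under \eqref{hypo}, I also need the bound $\ll U^{\frac53-5\delta+\eps}$ at $\sigma=\frac13+2\delta$.

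For the single integral, apply Cauchy--Schwarz:
\begin{equation*}
\int_U^{2U}\abs{\zeta L(\mathrm{sym}^2f)}\du\leq\Bigl(\int_U^{2U}\abs{\zeta(\sigma+iu)}^2\du\Bigr)^{\frac12}\Bigl(\int_U^{2U}\abs{L(\mathrm{sym}^2f,\sigma+iu)}^2\du\Bigr)^{\frac12}.
\end{equation*}
The mean values come from the approximate functional equation, or equivalently from the mean-value theorem for Dirichlet polynomials combined with convexity in $\sigma$. For $\zeta$ of degree one, $\int_U^{2U}\abs{\zeta(\sigma+iu)}^2\du\ll U^{1+\eps}+U^{2-2\sigma+\eps}$. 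For the degree-three function $L(\mathrm{sym}^2 f,s)$, the second moment is $\ll U^{\max(1,3(1-\sigma))+\eps}$ for $\sigma$ near $\frac12$ and beyond. I would then interpolate between $\sigma=\frac12$ and $\sigma=1+\eps$ using Gabriel's convexity theorem for mean values. Multiplying the two moments at $\sigma=\frac12$ gives $(U\cdot U^{3/2})^{1/2}=U^{5/4}$. This matches $2-\frac34$, which is the claimed exponent at $\sigma=\frac12$. At $\sigma=1$ the claimed bound is $U^{1+\eps}$, and linear interpolation of the exponents then yields $U^{2-\frac32\sigma}$ on the range in between. For $\sigma\geq \frac23$ the claim reduces to $U^{1+\eps}$, which is fine.

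The main obstacle is the conditional range $\sigma=\frac13+2\delta<\frac12$. There I plan to use the functional equations to reflect to $1-\sigma$. The $\zeta$ factor then grows like $U^{\frac12-\sigma}$ and $L(\mathrm{sym}^2f)$ like $U^{3(\frac12-\sigma)}$, so the product grows like $U^{2(1-2\sigma)}$. With the bound $U^{5/4}$ at $\frac12$, the estimate at $1-\sigma$ interpolates to $U^{2-\frac32(1-\sigma)}$, which together with the growth factor gives
\begin{equation*}
U^{\frac12+\frac32\sigma+2-4\sigma}=U^{\frac52-\frac52\sigma}.
\end{equation*}
Plugging in $\sigma=\frac13+2\delta$ gives $\frac52-\frac56-5\delta=\frac53-5\delta$, exactly the claimed exponent, which confirms the plan. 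The delicate points are two. First, I must check that $1/\zeta(2s)\ll U^{\eps}$ there under \eqref{hypo}, using Littlewood-type bounds near zero-free regions. Second, I must verify that the reflected mean-value bound for $\Re(s)=1-\sigma\in[\frac12,\frac23]$ holds in the required form.
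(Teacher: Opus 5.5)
Your route is the paper's: the factorisation $L(f\otimes f,s)=\zeta(s)L(\mathrm{sym}^2f,s)/\zeta(2s)$, the reduction of the double integral to $\min(T,T')\int_{T+T'}^{2(T+T')}$, Cauchy--Schwarz separating $\zeta$ from $L(\mathrm{sym}^2f)$, second moments from the approximate functional equation, and reflection through the functional equations on the conditional line.

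The step that fails as written is the interpolation you use to reach the exponent $2-\frac32\sigma$. Convexity between the exponent $\frac54$ at $\sigma=\frac12$ and the exponent $1$ at $\sigma=1$ gives $\frac52(1-\sigma)+(2\sigma-1)=\frac32-\frac{\sigma}{2}$, not $2-\frac32\sigma$. At $\sigma=\frac23$ this is $U^{7/6}$, whereas the lemma claims $U^{1+\eps}$. The line $2-\frac32\sigma$ is the chord from $(\frac12,\frac54)$ to $(\frac23,1)$, so interpolation only works if you already have $U^{1+\eps}$ at $\sigma=\frac23$. Your conditional argument feeds the exponent at $1-\sigma=\frac23-2\delta$ into the reflection. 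With the interpolated exponent it would therefore give only $\frac{11}{6}-7\delta$ instead of $\frac53-5\delta$.

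The repair is to drop the interpolation and argue at each fixed $\sigma\geq\frac12$, which is what the paper does. Take $N\asymp U^{3/2}$ in the approximate functional equation. The mean value theorem gives $\int_U^{2U}\abs{\sum_{n\leq N}b(n)n^{-\sigma-iu}}^2\du\ll U^{1+\eps}+N^{2-2\sigma+\eps}$. The dual sum, weighted by $\abs{\eps(f,\sigma+iu)}^2\asymp U^{3(1-2\sigma)}$, contributes $\ll U^{3(1-2\sigma)}N^{2\sigma+\eps}$. Both are $\ll U^{1+\eps}+U^{3-3\sigma+\eps}$ for every $\sigma\geq\frac12$, which is the bound you stated. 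Combined with $\int_U^{2U}\abs{\zeta(\sigma+iu)}^2\du\ll U^{1+\eps}$, Cauchy--Schwarz yields $U^{1+\eps}+U^{2-\frac32\sigma+\eps}$ directly.

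Your reflection computation $U^{2(1-2\sigma)}\cdot U^{\frac12+\frac32\sigma}=U^{\frac52-\frac52\sigma}$ is then exactly the paper's. Your second ``delicate point'' is just the first part of the lemma applied at $1-\sigma\in[\frac12,\frac23]$. Your first delicate point is the bound $1/\zeta(2s)\ll U^{\eps}$ on the boundary line of the zero-free region assumed in $(H_{\delta})$. The paper gives no argument for it beyond a citation of Titchmarsh (14.2.6), so you are not missing anything it provides.
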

\begin{preuve}
	A proof of analytic continuation can be found in \cite{Bump}*{Theorem 1.6.2}.\smallbreak
\noindent 
Let $\sigma\geq\frac{1}{2}$. We have,
\begin{equation*}
	\int_{t\sim T}\int_{t'\sim T'}\abs*{L\left(f\otimes f,\sigma+it+it'\right)}\dt\ll\int_{x\sim T+T'}\abs*{L\left(f\otimes f,\sigma+ix\right)}\min(T,T')\dx.
\end{equation*}
We can factorise $L(f\otimes f,\cdot)$ as the product of two $L$-functions. One has degree $1$ and the other has degree $3$. Precisely we have 
\begin{equation*}
	L(f\otimes f,s)=\frac{\zeta(s)}{\zeta(2s)}L(\text{sym}^2f,s),
\end{equation*}
where 
\begin{equation*}
	L(\text{sym}^2f,s)=\sum_{n=1}^{\infty}\frac{b(n)}{n^s},\quad b(n)=\sum_{ml^2=n}a(m^2).
\end{equation*}
The function $L(\text{sym}^2f,s)$ is the symmetric-square $L$-function (see \cite{Topics}*{Equation~(13.60)}). We have by the Cauchy-Schwarz inequality
\begin{equation*}\label{CS}
	\int_{T+T'}^{2(T+T')} \hspace{-0.8em}\abs*{L\left(f\otimes f,\sigma+ix\right)}\dx\leq \left(\int_{T+T'}^{2(T+T')}\abs*{\frac{\zeta(\sigma+ix)}{\zeta(2(\sigma+ix))}}^2\dx\right)^{\frac{1}{2}}\left(\int_{T+T'}^{2(T+T')}\hspace{-0.8em}\abs*{L(\text{sym}^2f,\sigma+ix)}^2\dx\right)^{\frac{1}{2}}.
\end{equation*}
The second moment of the $\zeta$ function on the critical line is well known (see \cite{Titch}*{Theorem 7.2}) and we have for $x\sim T+T'$
\begin{equation*}
	\frac{1}{\zeta(2\sigma+2ix)}\ll (T+T')^{\eps}. 
\end{equation*}
We now need to bound for an arbitrary $T>0$
\begin{equation*}
	\int_T^{2T}\abs*{L(\text{sym}^2f,\sigma+it)}^2\dt.
\end{equation*} 
	The approximate functional equation (see \cite{Iwakow}*{Theorem 5.3}) for $L$-functions gives for $t\in\R$
\begin{equation*}
	L(\text{sym}^2f,\sigma+it)=\sum_{n=1}^{\infty}\frac{b(n)}{n^{\sigma+it}}V_{\sigma+it}\left(\frac{n}{\sqrt{q}}\right)+\eps\left(f, \sigma+it\right)\sum_{n=1}^{\infty}\frac{b(n)}{n^{1-\sigma-it}}V_{\sigma-it}\left(\frac{n}{\sqrt{q}}\right),
\end{equation*}
where 
\begin{equation*}
	V_s(y)=\frac{1}{2i\pi}\int_{(3)}y^{-u}G(u)\frac{\gamma(f,s+u)}{\gamma(f,s)}\frac{\du}{u},\quad  G(u)=e^{u^2},
\end{equation*}
\begin{equation*}
	\gamma(f,s)=\pi^{-\frac{3}{2}s}\prod_{j=1}^3\Gamma\left(\frac{s+\kappa_j}{2}\right),
\end{equation*}
\begin{equation*}
\eps(f,s)=\eps(f)q(f)^{\frac{1}{2}-s}\frac{\gamma(f,1-s)}{\gamma(f,s)}
\end{equation*}
with $\abs{\eps(f)}=1$, $q=q(f)$ is the conductor of $L(\text{sym}^2f,s)$ and $\kappa_j\in\C$ are the local parameters of $L(\text{sym}^2f,s)$ at infinity.\smallbreak
\noindent Standard computations lead to 
\begin{align*}
	L(\text{sym}^2f,\sigma+it)&=\sumd{N}\sum_{n=1}^{\infty}\frac{b(n)}{n^{\sigma+it}}W\left(\frac{n}{N}\right)\\
	&+\eps(f,\sigma+it)\sumd{N}\sum_{n=1}^{\infty}\frac{b(n)}{n^{1-\sigma-it}}W\left(\frac{n}{N}\right)+O(\mathfrak{q}^{\frac{1-\sigma}{2}}T^{-1+\eps}+\eps(f,\sigma+it)\mathfrak{q}^{\frac{\sigma}{2}}T^{-1+\eps}),
\end{align*}
where $W$ is a smooth function with compact support in $[1;2]$, $N\ll \sqrt{\mathfrak{q}}^{1+\eps}$ and 
\begin{equation*}
	\mathfrak{q}=\prod_{j=1}^3(\abs{s+\kappa_j}+3)\asymp T^3
\end{equation*}
is the analytic conductor of $f$.
Expanding the square we get
\begin{align*}
	\abs*{L(\text{sym}^2f,\sigma+it)}^2&\ll\abs*{\sumd{N}\sum_{n=1}^{\infty}\frac{b(n)}{n^{\sigma+it}}W\left(\frac{n}{N}\right)}^2+ \mathfrak{q}^{1-\sigma}T^{-2+\eps}\\
	&\quad +\abs*{\eps(f,\sigma+it)}^2\left(\abs*{\sumd{N}\sum_{n=1}^{\infty}\frac{b(n)}{n^{1-\sigma-it}}W\left(\frac{n}{N}\right)}^2+ \mathfrak{q}^{\sigma}T^{-2+\eps}\right).
\end{align*}
Stirling's formula (Lemma \ref{Stirling}) gives
\begin{equation*}
	\abs*{\eps(f,\sigma+it)}^2\ll \mathfrak{q} ^{1-2\sigma}T^{\eps}.
\end{equation*}
We expand the square and take the integral on the first sum
\begin{align*}
	&\int_{T}^{2T}\abs*{\sumd{N}\sum_{n=1}^{\infty}\frac{b(n)}{n^{\sigma+it}}W\left(\frac{n}{N}\right)}^2\dt\\
	 &\ll T^{1+\eps}+\sumd{N,M} \sum_{\substack{n,m=1\\ \hspace{0.22cm} n\neq m\phantom{-}}}^{\infty}\frac{b(n)\overline{b(m)}}{n^{\sigma}m^{\sigma}\ln\left(\frac{n}{m}\right)}W\left(\frac{n}{N}\right)\overline{W\left(\frac{m}{M}\right)}\left(\left(\frac{m}{n}\right)^{2iT}-\left(\frac{m}{n}\right)^{iT}\right)\\
	&\ll T^{1+\eps}+ \mathfrak{q}^{1-\sigma+\eps},
\end{align*}
where we used $\sigma\geq \frac{1}{2}$ to bound the first term.\smallbreak
\noindent
Similarly we get
\begin{align*}
	&\int_{T}^{2T}\abs*{\sumd{N}\sum_{n=1}^{\infty}\frac{b(n)}{n^{1-\sigma-it}}W\left(\frac{n}{N}\right)}^2\dt\\
	&\ll T \mathfrak{q}^{\sigma-\frac{1}{2}}+ \sumd{N,M}\sum_{\substack{n,m=1\\ \hspace{0.22cm} n\neq m\phantom{-}}}^{\infty} \frac{b(n)\overline{b(m)}}{(nm)^{1-\sigma}\ln\left(\frac{n}{m}\right)}W\left(\frac{n}{N}\right)\overline{W\left(\frac{m}{M}\right)}\left(\left(\frac{m}{n}\right)^{2iT}-\left(\frac{m}{n}\right)^{iT}\right)\\
	&\ll T \mathfrak{q}^{\sigma-\frac{1}{2}}+ \mathfrak{q}^{\sigma+\eps}.
\end{align*}
We finally have
\begin{align*}
	\int_{T}^{2T}\abs*{L(\text{sym}^2f,\sigma+it)}^2\dt &\ll T^{1+\eps}+\mathfrak{q}^{1-\sigma+\eps}+ \mathfrak{q} ^{1-\sigma+\eps}T^{-2}+ \mathfrak{q} ^{1-2\sigma}T^{\eps}(T \mathfrak{q}^{\sigma-\frac{1}{2}}+ \mathfrak{q}^{\sigma+\eps}+ \mathfrak{q}^{\sigma}T^{-2})\\
	&\ll \left(T+\mathfrak{q}^{1-\sigma}\right)T^{\eps}\\
	&\ll \left(T+T^{3-3\sigma}\right)T^{\eps},
\end{align*}
where we used $\sigma\geq \frac{1}{2}$ and $\mathfrak{q}\asymp T^3$.
We now use \cite{Titch}*{Theorem 7.2} to get
\begin{equation*}
	\int_T^{2T}\abs{\zeta(\sigma+it)}^2\dt\ll T 
\end{equation*}
together with \eqref{CS} this yields
\begin{equation*}
	\int_{T}^{2T}\abs{L(f\otimes f,\sigma+it)}\dt\ll T^{1+\eps}+T^{2-\frac{3}{2}\sigma+\eps}.
\end{equation*}
If we assume \eqref{hypo}, we use the functional equation for $L(\text{sym}^2f,s)$ and $\zeta(s)$ (see \cite{Iwakow}*{Equation~(5.4)}) and Stirling's formula (Lemma \ref{Stirling})
\begin{equation*}
	\abs{\zeta(\sigma+it)}\asymp \abs{t}^{\frac{1}{2}-\sigma}\abs{\zeta(1-\sigma-it)}
\end{equation*}
\begin{equation*}
	\abs{L(\text{sym}^2f,\sigma+it)}\asymp \abs{t}^{3(\frac{1}{2}-\sigma)}\abs{L(\text{sym}^2f,1-\sigma-it)}
\end{equation*}
We then have the bounds
\begin{equation*}
	\int_{0}^T\abs{\zeta(\sigma+it)}^2\dt\ll T ^{\frac{3}{2}-\sigma}\text{ and } \frac{1}{\zeta(2\sigma+2it)}\ll \abs{t}^{\eps},
\end{equation*}
(see \cite{Titch}*{Equation~(14.2.6)}), 
hence
\begin{align*}
	\int_{T}^{2T}\abs{L(f\otimes f,\sigma+it)}\dt &\ll T^{1-\sigma}\times T^{3(\frac{1}{2}-\sigma)}\times (T^{\frac{1}{2}+\eps}+T^{\frac{3}{2}\sigma+\eps})\\
	&\ll T^{4(\frac{1}{2}-\sigma)}\left(T^{1+\eps}+T^{\frac{1}{2}+\frac{3}{2}\sigma+\eps}\right), 
\end{align*}
which ends the proof. 
\end{preuve}  
The next Lemma is a result on the fourth moment of the $\zeta$ function (see \cite{Ivic}*{Equation (2.1)}). It will be used in Section \ref{divisor}.
\begin{lemme}\label{moment4zeta}
	We have for $T\geq 1$ and $\eps>0$
\begin{equation*}
	\int_0^T\abs*{\zeta\left(\frac{1}{2}+it\right)}^4\dt\ll_{\eps} T^{1+\eps}.
\end{equation*}
\end{lemme}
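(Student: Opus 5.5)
The plan is to reduce the fourth moment to the mean square of a Dirichlet polynomial with divisor-function coefficients, using an approximate functional equation for $\zeta(s)^2$ and then a mean value theorem for Dirichlet polynomials. This is the classical route, and it gives even $T\log(T)^4$, which is more than we need.

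First split $[0;T]$ dyadically. The range $\abs{t}\leq 1$ contributes $O(1)$, so it suffices to show
\begin{equation*}
\int_{T_0}^{2T_0}\abs*{\zeta\left(\tfrac{1}{2}+it\right)}^4\dt\ll T_0^{1+\eps}
\end{equation*}
uniformly in $T_0\geq 1$; summing over the $O(\log T)$ dyadic blocks then gives the lemma.

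Next, apply the approximate functional equation (\cite{Iwakow}*{Theorem 5.3}) to $L(s)=\zeta(s)^2=\sum_n\tau(n)n^{-s}$, whose analytic conductor at $s=\frac{1}{2}+it$ is $\asymp T_0^2$, exactly as in the proof of Lemma \ref{moment1fonctionL}. Since the pole of $\zeta^2$ at $s=1$ is far from $\frac12+it$, the residue terms are negligible. This writes $\zeta(\frac{1}{2}+it)^2$ as
\begin{equation*}
\sum_n\tau(n)n^{-\frac{1}{2}-it}V_{\frac12+it}\left(\frac{n}{T_0}\right)+\eps\left(\tfrac{1}{2}+it\right)\sum_n\tau(n)n^{-\frac{1}{2}+it}V_{\frac12-it}\left(\frac{n}{T_0}\right),
\end{equation*}
with $\abs{\eps(\frac12+it)}=1$ on the critical line by Stirling (Lemma \ref{Stirling}). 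Hence $\abs{\zeta}^4$ is bounded by a constant times the sum of the squared moduli of the two sums, and by symmetry it suffices to treat the first.

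The main obstacle is that the weight $V_{\frac12+it}$ depends on $t$, so the mean value theorem cannot be applied directly. I would separate variables by writing $V$ through its defining Mellin integral over $\Re(u)=\eps$ with $G(u)=e^{u^2}$. Stirling gives $\gamma(s+u)/\gamma(s)\ll T_0^{\Re(u)}$ uniformly for $t\sim T_0$, and the factor $e^{u^2}$ decays rapidly in $\Im(u)$. Moving the line to $\Re(u)=A$ shows the $n$-sum can be truncated at $N=T_0^{1+\eps}$ with negligible error. On $\Re(u)=\eps$, Cauchy--Schwarz in $u$ against the weight $\abs{G(u)}/\abs{u}$ (the point $u=0$ is avoided since $\Re(u)=\eps$) bounds the square of the first sum by
\begin{equation*}
T_0^{\eps}\int_{(\eps)}\abs*{\sum_{n\leq N}\frac{\tau(n)n^{-u}}{n^{\frac12}}n^{-it}}^2\abs{G(u)}\frac{\abs{\du}}{\abs{u}}.
\end{equation*}
The coefficients inside the modulus are now independent of $t$.

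Finally, integrate over $t\in[T_0;2T_0]$ and apply the Montgomery--Vaughan mean value theorem $\int_{T_0}^{2T_0}\abs{\sum_{n\leq N}a_nn^{-it}}^2\dt=\sum_{n\leq N}(T_0+O(n))\abs{a_n}^2$ (alternatively, the diagonal/off-diagonal expansion with $1/\log(n/m)$ used in the proof of Lemma \ref{moment1fonctionL}). This gives the bound
\begin{equation*}
T_0^{\eps}\sum_{n\leq T_0^{1+\eps}}\frac{\tau(n)^2}{n^{1+2\eps}}\,(T_0+n)\ll T_0^{1+\eps},
\end{equation*}
using $\sum_{n\leq x}\tau(n)^2\ll x\log(x)^3$. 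The $u$-integral converges thanks to $e^{u^2}$, and this completes the dyadic bound and hence the lemma.
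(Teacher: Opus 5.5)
Your proof is correct. It differs from the paper only in that the paper gives no proof: it quotes the lemma as the classical fourth-moment estimate from Ivić (Equation (2.1) of the cited work). That estimate in fact holds in the much sharper form $\int_0^T\abs{\zeta(\frac12+it)}^4\,\mathrm{d}t\sim\frac{1}{2\pi^2}T\log(T)^4$ (Ingham).

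You instead give a self-contained proof of the weaker bound $T^{1+\eps}$, which is all the paper uses (in Proposition \ref{TPY,K}). Your route is the standard one:
\begin{itemize}
\item the Iwaniec--Kowalski approximate functional equation for $\zeta(s)^2$, the same tool the paper uses in Lemma \ref{moment1fonctionL};
\item removal of the $t$-dependence of $V_s$ via its Mellin integral on $\Re(u)=\eps$, followed by Cauchy--Schwarz in $u$;
\item the Montgomery--Vaughan mean value theorem.
\end{itemize}
The citation gives a stronger result at no cost. Your argument makes the paper independent of that input, and it spells out the separation of variables that the proof of Lemma \ref{moment1fonctionL} leaves to ``standard computations''.

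A few cosmetic points to tidy.
\begin{itemize}
\item Keep one normalization. Either use $V_s(n)$ with $\gamma(s+u)/\gamma(s)\ll T_0^{\Re(u)}$, or use $V_s(n/T_0)$ with that ratio multiplied by $T_0^{-u}$. Mixing the two would suggest an effective length $T_0^2$ instead of $T_0$.
\item The Stirling bound for $\gamma(s+u)/\gamma(s)$ is uniform in $\Im(u)$ only up to a factor $e^{\pi\abs{\Im(u)}/2}$. The factor $\abs{e^{u^2}}=e^{\Re(u)^2-\Im(u)^2}$ absorbs it, so the Cauchy--Schwarz weight should be something like $e^{-\Im(u)^2/2}\abs{u}^{-1}$ rather than $\abs{G(u)}\abs{u}^{-1}$.
\item The identity $\abs{\eps(\frac12+it)}=1$ holds exactly, since $\overline{\Gamma(z)}=\Gamma(\bar z)$. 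In fact the second sum is the complex conjugate of the first, so your symmetry step is an identity.
\item The residue term comes from double poles at $u=-s$ and $u=1-s$. It is negligible not because these poles are far away, but because $G$ and $G'$ there are $\ll(1+\abs{t})e^{-t^2}$, while $1/\abs{\gamma(s)}\asymp\abs{t}^{1/2}e^{\pi\abs{t}/2}$.
\end{itemize}
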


We Recall the Voronoi summation formula. For modular forms, the proof can be found in \cite{Iwakow}*{Equation (4.17)}. For the divisor function, the proof can be found in \cite{Bilinear}.
\begin{proposition}\label{Voronoi}
	Let $w:]0;+\infty[\rightarrow\C$ be a smooth function with compact support, then for $q\geq 1$ and $h\in(\Z/q\Z)^*$
\begin{equation*}
	\sum_{n=1}^{\infty}a(n)e\left(\frac{hn}{q}\right)w(n)=\frac{1}{q}\sum_{n=1}^{\infty}a(n)e\left(-\frac{\overline{h}n}{q}\right)\omega_J\left(\frac{\sqrt{n}}{q}\right),
\end{equation*}
where $h\overline{h}=1[q]$ and 
\begin{equation}\label{defomegaJ}
	\omega_J(\alpha)=2\pi i^k\int_0^{\infty}w(x)J_{k-1}(4\pi\alpha\sqrt{x})\dx.
\end{equation}
A similar relation holds for the divisor function $\tau(n)$
\begin{align*}
	\sum_{n=1}^{\infty}\tau(n)e\left(\frac{hn}{q}\right)w(n)&=q^{-1}\int_0^{\infty}(\log(x)+2\gamma-2\log(q))w(x)\dx+\sum_{n=1}^{\infty}\tau(n)e\left(-\frac{\overline{h}n}{q}\right)\omega_Y\left(\frac{\sqrt{n}}{q}\right)\\
	&\quad +\sum_{n=1}^{\infty}\tau(n)e\left(\frac{\overline{h}n}{q}\right)\omega_K\left(\frac{\sqrt{n}}{q}\right),
\end{align*}
where
\begin{equation}\label{omegaBdef}
	\omega_{B}(\alpha)=c_{B_0}\int_0^{\infty}w(x)B_0(4\pi\alpha\sqrt{x})\dx,
\end{equation}
with $B\in\{Y,K\}$, $c_{Y_0}=-2\pi$ and $c_{K_0}=4$. 
\end{proposition}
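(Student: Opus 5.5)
The statement just before this point is Proposition~\ref{Voronoi}. The plan is to derive it from modularity of $f$ and, for $\tau$, from the Estermann-type functional equation. In the cuspidal case I use the Mellin-transform route, which fits the tools already collected.

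\textbf{Step 1 (additively twisted $L$-function).} Write $w(n)=\frac{1}{2i\pi}\int_{(\sigma)}\widetilde w(s)n^{-s}\ds$ with $\widetilde w$ entire and rapidly decaying, since $w$ has compact support in $]0;+\infty[$. Then
\begin{equation*}
\sum_n a(n)e\left(\tfrac{hn}{q}\right)w(n)=\frac{1}{2i\pi}\int_{(\sigma)}\widetilde w(s)L\left(f,\tfrac hq,s\right)\ds,
\qquad L\left(f,\tfrac hq,s\right)=\sum_n a(n)e\left(\tfrac{hn}{q}\right)n^{-s}.
\end{equation*}

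\textbf{Step 2 (functional equation).} Take $\gamma=\begin{pmatrix}h&*\\ q&-\overline h\end{pmatrix}\in\SL_2(\Z)$, adjusting the upper entries so that $\gamma$ maps $-\frac{\overline h}{q}+\frac{i}{qy}$ to $\frac{h}{q}+\frac{iy}{q}$. The weight-$k$ modularity $f(\gamma z)=(cz+d)^kf(z)$ then gives
\begin{equation*}
f\left(\tfrac hq+\tfrac{iy}{q}\right)=i^k y^{-k}f\left(-\tfrac{\overline h}{q}+\tfrac{i}{qy}\right).
\end{equation*}
Apply the Mellin transform in $y$, using the Fourier expansion with the normalization $a(n)n^{\frac{k-1}{2}}$. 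This gives the functional equation
\begin{equation*}
\Lambda\left(f,\tfrac hq,s\right)=i^k\Lambda\left(f,-\tfrac{\overline h}{q},1-s\right),
\qquad \Lambda\left(f,\tfrac hq,s\right)=\left(\tfrac{q}{2\pi}\right)^s\Gamma\left(s+\tfrac{k-1}{2}\right)L\left(f,\tfrac hq,s\right).
\end{equation*}
Because $f$ is cuspidal, both sides are entire and bounded in vertical strips.

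\textbf{Step 3 (contour shift).} Move the line of integration to $\sigma<0$; no poles are crossed. Insert the functional equation and expand $L(f,-\frac{\overline h}{q},1-s)$ as an absolutely convergent Dirichlet series, which is legitimate for $\Re(1-s)>1$. This produces
\begin{equation*}
\frac{1}{q}\sum_n a(n)e\left(-\tfrac{\overline h n}{q}\right)\frac{1}{2i\pi}\int \widetilde w(s)\,q^{2-2s}\,(2\pi)^{2s-1}\ldots\, \frac{\Gamma(1-s+\frac{k-1}{2})}{\Gamma(s+\frac{k-1}{2})}\,n^{s-1}\ds,
\end{equation*}
that is, a ratio of Gamma functions of the type appearing in $F$ (Lemma~\ref{estimF}).

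\textbf{Step 4 (identifying the kernel).} The remaining task is to recognise the inner integral as $\omega_J(\sqrt n/q)$. For this I write $\omega_J(\alpha)$ via the Mellin--Barnes formula for $J_{k-1}$ in Lemma~\ref{mellinbessel}, substitute $x$-integration against $w$, and compare the two expressions. This is a routine but sign- and constant-sensitive bookkeeping step, and I expect it to be the main obstacle. I will track the factor $2\pi i^k$ carefully, checking it against $F(s)$.

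\textbf{Divisor case.} Here $\sum\tau(n)e(hn/q)n^{-s}=\zeta(s,\frac hq)^2$-type Estermann function $D(s,\frac hq)$. It has a double pole at $s=1$ whose residue, computed from the Hurwitz expansions, yields the main term $q^{-1}\int(\log x+2\gamma-2\log q)w(x)\dx$. Its functional equation involves both $D(1-s,\frac{\overline h}{q})$ and $D(1-s,-\frac{\overline h}{q})$, with Gamma factors $\Gamma(1-s)^2\cos$ and $\Gamma(1-s)^2$ respectively. Matching these with the Mellin transforms of $Y_0$ and $K_0$ from Lemma~\ref{mellinbessel}, and applying Proposition~\ref{Parseval}, gives $\omega_Y$ and $\omega_K$ with constants $-2\pi$ and $4$.
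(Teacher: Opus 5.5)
You should know that the paper gives no argument for this proposition; it cites \cite{Iwakow} and \cite{Bilinear}. Your derivation is therefore a self-contained replacement. For the cusp form it is the standard proof and it works, but the divisor case as sketched has the functional equation's Gamma factors attached to the wrong twists.

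\textbf{Cusp form case.} Your functional equation $\Lambda(f,\frac hq,s)=i^k\Lambda(f,-\frac{\overline h}{q},1-s)$ is correct. Step~4 needs no new bookkeeping: it is Proposition~\ref{propomega} at $\alpha=n/q^2$ after $s\mapsto 1-s$, since
\begin{equation*}
\frac1q F(1-s)\left(\frac{n}{q^2}\right)^{s-1}=i^k\left(\frac{q}{2\pi}\right)^{1-2s}\frac{\Gamma\left(\frac{k+1}{2}-s\right)}{\Gamma\left(\frac{k-1}{2}+s\right)}\,n^{s-1},
\end{equation*}
which is exactly your Step~3 kernel (with $\psi(s)=\widetilde w(s)$). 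Two small repairs are needed.
\begin{itemize}
\item The matrix must be $\gamma=\begin{pmatrix}h&b\\ q&\overline h\end{pmatrix}$ with $h\overline h-bq=1$, so that $qz+\overline h=i/y$ at $z=-\frac{\overline h}{q}+\frac{i}{qy}$. With lower-right entry $-\overline h$, the determinant $-h\overline h-bq$ cannot be $1$ once $q\geq 3$. Your displayed transformation law is nonetheless right.
\item Boundedness of $\Lambda$ in vertical strips only gives $L(f,\frac hq,s)\ll e^{\pi\abs{t}/2}$. Since $\widetilde w$ decays only faster than every power of $\abs{t}$, you need the Phragm\'en--Lindel\"of polynomial bound before moving the line.
\end{itemize}

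\textbf{Divisor case.} As stated, it would give the wrong formula. You attach $\cos(\pi s)\Gamma(1-s)^2$ to $D(1-s,\frac{\overline h}{q})$ and $\Gamma(1-s)^2$ to $D(1-s,-\frac{\overline h}{q})$. Matching would then put the $Y_0$ kernel on $e(\overline hn/q)$ and the $K_0$ kernel on $e(-\overline hn/q)$, the opposite of the statement. Write $D(s,\frac hq)=q^{-2s}\sum_{a,b\,(q)}e(\frac{hab}{q})\zeta(s,\frac aq)\zeta(s,\frac bq)$, apply Hurwitz's functional equation to both factors and sum over $a,b$. This gives
\begin{equation*}
D\left(s,\tfrac hq\right)=2(2\pi)^{2s-2}q^{1-2s}\Gamma(1-s)^2\left[D\left(1-s,\tfrac{\overline h}{q}\right)-\cos(\pi s)\,D\left(1-s,-\tfrac{\overline h}{q}\right)\right],
\end{equation*}
so the cosine belongs to $-\overline h$.

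With this, Lemma~\ref{mellinbessel} and Proposition~\ref{Parseval} do give the kernels $-2\pi Y_0$ and $4K_0$. Compute them directly as you propose, because the $H(s)$ of Proposition~\ref{propomega} is missing the factor $2$ forced by $c_{K_0}=4$. For $Y_0$, first expand $D(1-s,\cdot)$ on $\Re(s)<0$. Then move each term to $\frac34<\Re(s)<1$: there the Mellin integral of $x\mapsto Y_0(4\pi\sqrt{nx}/q)$ converges absolutely, and no pole of $\Gamma(1-s)^2$ is crossed.

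Finally, the factor $q^{1-2s}$ puts a $q^{-1}$ in front of both dual sums, exactly as in the cusp form case. The displayed statement drops it, but your computation will correctly produce it. It is also what the paper actually uses later, via the factor $r^{-2}$ in $A_w(q;\tau)$.
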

We will also need the next proposition, to handle the shifted convolution problems that appear in Section \ref{termeerreur} and Section \ref{termeerreurtau}. 
\begin{proposition}\label{problemconv}
	Let $u=(a(n))_{n\geq 1}$ (as in Theorem \ref{moduleqqc}) or $u=(\tau(n))_{n\geq 1}$ (as in Theorem \ref{divisorAP}). Let $g:\R^2\rightarrow\R$ be a smooth function with compact support in a box $[M;2M]\times [N;2N]$, for some $N,M\geq 1$. We assume $g$ satisfies 
\begin{itemize}
	\item $g(x,y)\ll A$,
	\item $\forall (i,j)\in\N^2,i+j\geq 1,\quad $ $\displaystyle x^{-i}y^{-j}\frac{\partial^{i+j}g}{\partial x^{i}\partial y^j}\ll CP^{i+j}$,
\end{itemize}
for some $A,C,P>0$. Then for every $h\in\Z^*$, we have
\begin{equation*}
	\sum_{n\mp m=h}u(n)u(m)g(m,n)=\delta_{u=\tau}\int_0^{\infty}g(x,\pm x\mp h)\Lambda_h(x,\pm x\mp h)\dx+O\left(AP^{\frac{5}{4}}(M+N)^{\frac{1}{4}}(MN)^{\frac{1}{4}+\eps}\right), 
\end{equation*}
where 
\begin{equation*}
	\Lambda_h(x,y)=\sum_{k=1}^{\infty}\frac{1}{k^2}c_k(h)(\log(x)-2\gamma-2\log(k))(\log(y)-2\gamma-2\log(k)),
\end{equation*}
where the implied constant is independent of $h$ and $g$. 
\end{proposition}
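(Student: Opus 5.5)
This proposition is a shifted convolution estimate for a smooth weight with explicit derivative control $P$. I would prove it by the $\delta$-method, or equivalently the circle method with Kloosterman refinement in the style of Duke--Friedlander--Iwaniec, as carried out in \cite{quadraticdivisorproblem}. The plan is to follow that source closely while tracking the dependence on $A$, $C$, $P$, $M$, $N$.

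\textbf{Detecting the shift.} Write the condition $n\mp m=h$ with the Duke--Friedlander--Iwaniec delta symbol at level $Q\asymp (M+N)^{1/2}$:
\begin{equation*}
	\delta(r)=\sum_{k\leq Q}\ \sums{d\,(k)}e\left(\frac{dr}{k}\right)\Delta_k(r),
\end{equation*}
with $\Delta_k$ a smooth weight of size $\ll 1/(kQ)$ that is essentially supported on $|r|\ll kQ$. We then get
\begin{equation*}
	\sum_{k\leq Q}\ \sums{d\,(k)}e\left(-\frac{dh}{k}\right)\sum_{n,m}u(n)u(m)e\left(\frac{d(n\mp m)}{k}\right)g(m,n)\Delta_k(n\mp m-h).
\end{equation*}
Both variables now carry additive twists. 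Apply Proposition \ref{Voronoi} in $n$ and in $m$.

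\textbf{After Voronoi.} In the divisor case, the zero-frequency terms $q^{-1}\int(\log x+2\gamma-2\log k)\ldots$ in each variable combine with $\sums{d\,(k)}e(-dh/k)=c_k(h)$. Summing over $k$, and completing the $k$-sum up to an admissible error via Lemma \ref{Diricraman}, produces the main term $\int g(x,\pm x\mp h)\Lambda_h\dx$. In the cusp form case there are no such terms, hence the factor $\delta_{u=\tau}$.

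The dual terms have lengths $n^*\ll k^2 P^2 N^{\eps}$ and $m^*\ll k^2P^2M^{\eps}$. This follows by repeated integration by parts in the Bessel transforms, using the hypothesis $x^{-i}y^{-j}\partial^{i+j}g\ll CP^{i+j}$ together with the derivative bounds for $\Delta_k$. The $d$-sum then becomes a Kloosterman sum $S(\mp h, n^*\mp m^*;k)$. Bounding it by Weil, $\ll k^{1/2+\eps}(h,n^*,k)^{1/2}$, and the Bessel integrals by $A(MN)^{1/4}$ times stationary-phase decay, gives roughly
\begin{equation*}
	\sum_{k\leq Q}\frac{1}{kQ}\,\frac{1}{k^2}\,k^{1/2}\,A\,(MN)^{1/4}\,(k^2P^2)^{3/4}\cdots .
\end{equation*}
Balancing against the choice of $Q$ should yield $AP^{5/4}(M+N)^{1/4}(MN)^{1/4+\eps}$. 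The gcd factors are summed using $\sum_{k}(h,k)^{1/2}k^{-1}\ll |h|^{\eps}$-type bounds, but the claimed uniformity in $h$ must come from Weil with the gcd absorbed. Deligne's bound (Theorem \ref{Deligne}) and $\tau(n)\ll n^\eps$ handle the dual coefficients, and Rankin--Selberg is used in mean square.

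\textbf{Main obstacle.} The hard part is the careful uniform analysis of the double Bessel transform of $g(x,y)\Delta_k(x\mp y-h)$. We need enough decay in $n^*,m^*$ and the exact $P$-exponent $5/4$, without losing powers of $C$. The bound is stated in terms of $A$ only, so $C$ presumably enters only logarithmically through truncation, and one must check this. I would first attempt it by separating variables with Mellin inversion in the $\Delta_k$ weight and then applying Lemma \ref{mellinbessel} with Stirling (Lemma \ref{Stirling}), falling back on direct stationary phase if needed.
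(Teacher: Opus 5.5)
\textbf{There is a gap.} Your overall plan is the paper's: its proof of Proposition \ref{problemconv} simply reruns the Duke--Friedlander--Iwaniec $\delta$-method argument while tracking $\sup\abs{g}\ll A$. However, the way you set that argument up cannot produce the factor $P^{5/4}$, and your final ``balancing against the choice of $Q$'' has nothing left to balance. You take $Q\asymp(M+N)^{1/2}$, which is forced as long as the shift $n\mp m-h$ ranges over an interval of length $\asymp M+N$. Then for $k$ near $Q$ the derivatives of $g$, of size $P/M$ and $P/N$, dominate those of $\Delta_k$, which are of size $1/(kQ)$. The dual sums therefore have lengths $\asymp k^2P^2/M$ and $k^2P^2/N$; your $k^2P^2$ is off by these factors. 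Weil's bound combined with $z^{-1/2}$ decay of the Bessel kernels then gives, already for $M\asymp N$, at best a bound of order $AP^{3}N^{3/4}$ instead of $AP^{5/4}N^{3/4}$. Your heuristic sum, with Bessel integrals of size $A(MN)^{1/4}$, is not dimensionally correct, so it hides this loss.

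The missing step is to make the level depend on $P$. Since $g$ is only evaluated on $n\mp m=h$, insert a smooth factor $\phi(n\mp m-h)$ with $\phi(0)=1$, supported in $\abs{u}\leq U$ and with $\phi^{(j)}\ll U^{-j}$, where $U\asymp MN/(P(M+N))\asymp \min(M,N)/P$. Then run the $\delta$-symbol with $Q=U^{1/2}$. Now $P/\min(M,N)\ll 1/(kQ)$ for every $k\leq 2Q$. Hence $E=g\,\phi\,\Delta_k$ (the last two factors evaluated at the shift) satisfies $\partial_x^i\partial_y^jE\ll \max(A,C)(kQ)^{-1-i-j}$, which is exactly the bound the paper quotes. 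The dual sums then have $k$-independent lengths $M^{1+\eps}/Q^2$ and $N^{1+\eps}/Q^2$; this is what the paper means by ``the length of the sums is unchanged''. The only place $A$ enters is $\iint\abs{E}\ll A\min(M,N)\log Q$, the paper's replacement for Equation (30) of Duke--Friedlander--Iwaniec. Combined with the $z^{-1/2}$ bounds, it gives each double Bessel transform the bound $A\min(M,N)\,k\,(m^*n^*MN)^{-1/4}\log Q$. Weil's bound then yields
\[
\sum_{k\leq 2Q}k^{-\frac32+\eps}\,A\min(M,N)\,k\,(MN)^{-\frac14}\Bigl(\frac{MN}{Q^4}\Bigr)^{\frac34}\ll A\min(M,N)(MN)^{\frac12}Q^{-\frac52+\eps},
\]
which for $Q^2\asymp \min(M,N)/P$ is $AP^{5/4}(M+N)^{1/4}(MN)^{1/4+\eps}$. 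In this scheme your concern about $C$ resolves as you guessed: $C$ only enters through the integrations by parts that truncate the dual sums, as in the paper's sketch.
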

\begin{preuve}
We follow the proof of \cite{quadraticdivisorproblem}*{Theorem 1} and especially section 7. We follow their notations, we have here $a=b=1$, $X=M$ and $Y=N$. We focus on the parts of the proof that change. The function $E$ satisfies 
\begin{equation*}
	E^{(i,j)}\ll \frac{\max(A,B)}{qQ}\left(\frac{1}{qQ}\right)^{i+j},
\end{equation*}
for $(i,j)\in\N^2$. The length of sums in $m$ and $n$ is unchanged, that is, after application of the Voronoi summation formula, both sums satisfy
\begin{equation*}
	m\ll \frac{M}{Q^2}Q^{\eps}\text{ and }n\ll \frac{N}{Q^2}Q^{\eps}.
\end{equation*}
Then, \cite{quadraticdivisorproblem}*{Equation (30)} should be replaced by
\begin{equation*}
	\iint \ll A\min(M,N)(M+N)^{-1}\log(Q). 
\end{equation*}
The end of the proof is the same and we get our desired result. 	
\end{preuve}

\subsection{Special function in the Voronoi summation formula}\label{specialfunction}

As in \cite{LauZhao}, it is crucial to obtain bounds on the function $\omega$ which appears in the Voronoi summation formula. We recall here some notations and results from \cite{LauZhao} and establish new ones needed for the proof.

Let $w$ be a smooth function supported on $[H;X]$ satisfying $w(x)=1$ for all $x\in [2H;X-H]$ and $w^{(j)}(x)\ll_j H^{-j}$, where $q<H<\frac{X}{3}$ is a parameter, which will be optimized at the end of the paper.\smallbreak
\noindent We define $\psi$ as the Mellin transform of $w$ \textit{i.e} 
\begin{equation*}
	\forall s\in\C,\quad \psi(s)=\mathcal{M}_{w}(s)=\int_0^{\infty}w(x)x^{s-1}\dx.
\end{equation*}
\begin{lemme}\label{majpsi}
	Let $s=c+it$, $c\leq 1$ and $j\geq 1$, then
\begin{equation*}
	\psi(1-s)\ll_j HX^{-c}\left(\frac{XH^{-1}}{\abs{s}}\right)^j,
\end{equation*}
for $\abs{t}\geq 1$. If $c$ is not an integer we have
\begin{equation*}
	\psi(1-s)\ll_j HX^{-c}\left(\frac{XH^{-1}}{1+\abs{t}}\right)^{j},
\end{equation*}
uniformly for $t\in\R$. 
\end{lemme}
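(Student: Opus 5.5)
The plan is to integrate by parts $j$ times in the Mellin integral defining $\psi(1-s)$ and exploit that the derivatives of $w$ are supported only on the two short transition intervals $[H;2H]$ and $[X-H;X]$. Writing
\begin{equation*}
	\psi(1-s)=\int_0^{\infty}w(x)x^{-s}\dx ,
\end{equation*}
one integration by parts (no boundary terms since $w$ has compact support in $]0;+\infty[$) gives $\psi(1-s)=\frac{-1}{1-s}\int_0^\infty w'(x)x^{1-s}\dx$. Iterating $j$ times,
\begin{equation*}
	\psi(1-s)=\frac{(-1)^j}{(1-s)(2-s)\cdots(j-s)}\int_0^{\infty}w^{(j)}(x)x^{j-s}\dx .
\end{equation*}

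Next, bound the integral trivially using $w^{(j)}(x)\ll_j H^{-j}$ and the support of $w^{(j)}$, which has measure $\ll H$ and lies in $[H;2H]\cup[X-H;X]$. Since $c\leq 1$ and $j\geq 1$, the exponent $j-c\geq 0$, so $x^{j-c}\ll X^{j-c}$ on both pieces. Hence
\begin{equation*}
	\int_0^{\infty}w^{(j)}(x)x^{j-s}\dx\ll_j H\cdot H^{-j}X^{j-c}=HX^{-c}\left(XH^{-1}\right)^j .
\end{equation*}

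It remains to bound the Pochhammer-type denominator $\prod_{i=1}^{j}|i-s|$ from below.

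\textbf{Case $|t|\geq 1$.} We have $|i-s|\geq |t|$, and also $|i-s|\gg_{i,c}|s|$ (if $|s|$ is large compared to $i$ this is the triangle inequality; otherwise $|s|\ll_i 1\leq|t|\leq|i-s|$). Thus the product is $\gg_j |s|^j$, which gives the first bound.

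\textbf{Case $c\notin\Z$, any $t\in\R$.} Each factor satisfies $|i-s|\geq\max(|i-c|,|t|)$, and $|i-c|\geq \operatorname{dist}(c,\Z)>0$. Hence $|i-s|\gg_{c}1+|t|$, and the product is $\gg_{j,c}(1+|t|)^j$, which gives the second bound uniformly in $t$. The implied constant depends on $c$ through its distance to $\Z$; since the statement fixes $c$, I will treat this dependence as acceptable.

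The only delicate point is the lower bound on the denominator near the real axis. There, factors $i-s$ could vanish when $c=i$ is an integer, which is exactly why the uniform statement requires $c\notin\Z$, and why for integer $c$ one restricts to $|t|\geq 1$. Everything else is a routine trivial estimate.
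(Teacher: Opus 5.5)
Your proposal is correct and is the same argument as the paper's: integrate by parts $j$ times to get $\psi(1-s)=\frac{1}{(s-1)\cdots(s-j)}\int_0^{\infty}w^{(j)}(x)x^{j-s}\dx$, then bound the integral trivially using $w^{(j)}\ll_j H^{-j}$ on $[H;2H]\cup[X-H;X]$ and $x^{j-c}\leq X^{j-c}$. Your explicit lower bounds for $\prod_{i\leq j}\abs{i-s}$ spell out a step the paper leaves implicit, including the point that in the second bound the constant depends on the distance from $c$ to $\Z$.
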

\begin{preuve}
Recall that, 
\begin{equation*}
	\psi(1-s)= \int_0^{\infty}w(x)x^{-s}\dx.
\end{equation*}
We integrate by parts $j$ times to obtain
\begin{equation*}
	\psi(1-s)=\frac{1}{(s-1)\dots(s-j)}\int_0^{\infty}w^{(j)}(x)x^{-s+j}\dx. 
\end{equation*}
We then use the bound $w^{(j)}\ll H^{-j}$ and that $w^{(j)}$ is supported on $[H;2H]\cup [X-H;X]$ to get our result.   
\end{preuve}

The next proposition will give an expression of the functions $\omega_B$ for $B\in\{J,Y,K\}$ as an inverse Mellin transform. 
\begin{proposition}\label{propomega}
	Let $n\geq 1$ be an integer and  $\alpha>0$ be a real number, we have the following formula
\begin{equation}\label{propomegaJ}
	\omega_J\left(\sqrt{\alpha}\right)=\frac{1}{2i\pi}\int_{(\sigma)}F(s)\psi(1-s)\alpha^{-s}\ds,
\end{equation}
where $\displaystyle\sigma>-\frac{k-1}{2}$, 
\begin{equation*}
	F(s)=2\pi i^k(2\pi)^{-2s}\frac{\Gamma\left(\frac{k-1}{2}+s\right)}{\Gamma\left(\frac{k+1}{2}-s\right)},
\end{equation*}
\begin{equation}\label{propomegaY}
	\omega_Y\left(\sqrt{\alpha}\right)=\frac{1}{2i\pi}\int_{(\sigma)}G(s)\psi(1-s)\alpha^{-s}\ds,
\end{equation}
where $\sigma>0$, 
\begin{equation*}
	G(s)=2(2\pi)^{-2s}\cos(\pi s)\Gamma(s)^2,
\end{equation*}
\begin{equation}\label{propomegaK}
	\omega_K\left(\sqrt{\alpha}\right)=\frac{1}{2i\pi}\int_{(\sigma)}H(s)\psi(1-s)\alpha^{-s}\ds,
\end{equation}
where $\sigma>0$ and 
\begin{equation*}
	H(s)=(2\pi)^{-2s}\Gamma(s)^2. 
\end{equation*}
\end{proposition}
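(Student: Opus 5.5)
We prove all three formulas with the same Mellin--Parseval argument: write $\omega_B(\sqrt{\alpha})$ as $c_B\int_0^\infty w(x)B_0(4\pi\sqrt{\alpha x})\,\dx$ (with $J_{k-1}$ in place of $B_0$ for $B=J$), compute the Mellin transform of the kernel $x\mapsto B(4\pi\sqrt{\alpha x})$, and apply Proposition \ref{Parseval} with $h=w$. Since $w$ is smooth with compact support in $[H;X]$, its Mellin transform $\psi(s)$ is entire and, by Lemma \ref{majpsi}, decays faster than any power of $\abs{t}$ on vertical lines. Hence every integrability hypothesis involving $w$ holds trivially, and the only constraint on the contour $\Re(s)=\sigma$ comes from the strip of holomorphy of the Bessel kernel's Mellin transform.

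For $\omega_J$ we bypass Parseval and use the first formula of Lemma \ref{mellinbessel}. Put $y=2\pi\sqrt{\alpha x}$, so that $J_{k-1}(4\pi\sqrt{\alpha x})=J_{k-1}(2y)$. Applying the lemma at the point $2y$ (so $\frac{2y}{2}=y$) gives
\begin{equation*}
J_{k-1}(2y)=\frac{1}{2i\pi}\int_{(\sigma)}\frac{\Gamma(s+\frac{k-1}{2})}{\Gamma(\frac{k+1}{2}-s)}y^{-2s}\ds
\end{equation*}
after the shift $s\mapsto s+\frac{k-1}{2}$, valid for $-\frac{k-1}{2}<\sigma<\frac12$. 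Multiply by $2\pi i^k w(x)$ and integrate over $x$. Fubini is justified because, on $0<\sigma<\frac12$, the integrand is $\ll (1+\abs{t})^{2\sigma-1}$ by Lemma \ref{estimF}. This is not absolutely integrable in $t$, so at this step we first move to a line where the integrand is absolutely integrable, or we insert the decay of $\psi$ by integrating by parts in $x$ before exchanging. Since $y^{-2s}=(2\pi)^{-2s}\alpha^{-s}x^{-s}$ and $\int_0^\infty w(x)x^{-s}\dx=\psi(1-s)$, we obtain \eqref{propomegaJ} on that strip. The integrand $F(s)\psi(1-s)\alpha^{-s}$ is holomorphic for $\Re(s)>-\frac{k-1}{2}$ and decays rapidly in $t$, by Lemma \ref{estimF} and Lemma \ref{majpsi}. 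Shifting the contour therefore extends the formula to every $\sigma>-\frac{k-1}{2}$.

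For $\omega_Y$ and $\omega_K$ we use Proposition \ref{Parseval} with $f=Y_0$ (resp. $K_0$) scaled. By Lemma \ref{mellinbessel} and the substitution $u=4\pi\sqrt{\alpha x}$, the Mellin transform of $x\mapsto Y_0(4\pi\sqrt{\alpha x})$ at $s$ is
\begin{equation*}
2(4\pi\sqrt{\alpha})^{-2s}\mathcal{M}_{Y_0}(2s)=-\frac{2^{2s}}{\pi}(4\pi)^{-2s}\alpha^{-s}\cos(\pi s)\Gamma(s)^2,
\end{equation*}
valid for $0<\Re(s)<\frac34$. Parseval (in the form $\int f h=\frac{1}{2i\pi}\int \mathcal M_f(s)\mathcal M_h(1-s)\ds$) then gives $\int_0^\infty w(x)Y_0(4\pi\sqrt{\alpha x})\dx$. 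Multiplying by $c_{Y_0}=-2\pi$ produces exactly $2(2\pi)^{-2s}\cos(\pi s)\Gamma(s)^2=G(s)$, times $\psi(1-s)\alpha^{-s}$. For $K_0$ the same computation with $c_{K_0}=4$ gives $4\cdot 2\cdot 2^{2s-2}(4\pi)^{-2s}\Gamma(s)^2=2(2\pi)^{-2s}\Gamma(s)^2$. The stated $H(s)=(2\pi)^{-2s}\Gamma(s)^2$ differs from this by a factor of $2$, so this normalisation has to be double-checked against the definition \eqref{omegaBdef}. Once either identity holds on a small strip $0<\sigma<\sigma_0$, it extends to all $\sigma>0$ by contour shifting. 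This is legitimate because $G(s)$ and $H(s)$ are holomorphic for $\Re(s)>0$ (the double poles of $\Gamma(s)^2$ lie at nonpositive integers), grow at most polynomially in $t$ by Lemma \ref{Stirling} since $\cos(\pi s)\Gamma(s)^2\ll \abs{t}^{2\sigma-1}$, and $\psi(1-s)$ decays rapidly.

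The main obstacle is justifying the interchange of integrals and the use of Parseval near the edges of the convergence strips. The kernel transforms only decay like $\abs{t}^{2\sigma-1}$, and in the $J$ case the Mellin--Barnes integral is only conditionally convergent. We handle this by first deriving each identity in a range where $\psi(1-s)$ supplies rapid decay and $\mathcal M_f\in L^2$ on the line ($\sigma<\frac12$). The Parseval hypothesis $\int x^{2-2c}\abs{w}^2\dx/x<\infty$ is automatic because $w$ has compact support away from $0$. Analytic continuation and contour shifting then finish the argument.
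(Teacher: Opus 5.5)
Your proof is correct and is essentially the paper's own argument. For $\omega_J$ the paper likewise inserts the Mellin--Barnes integral of Lemma \ref{mellinbessel}, exchanges integrals on a line $-\frac{k-1}{2}<\sigma<0$ where $F$ is absolutely integrable (this is the clean way to state your somewhat self-contradictory Fubini step), and then shifts right using Lemma \ref{majpsi}. For $\omega_Y,\omega_K$ it writes $w$ by Mellin inversion and swaps on a small strip, which is exactly the Parseval identity you invoke; note that for $Y_0$ the $L^2$ hypothesis requires $0<\sigma<\frac14$, not $\sigma<\frac12$. Your factor-of-$2$ concern is justified: with $c_{K_0}=4$ and $\mathcal{M}_{K_0}(s)=2^{s-2}\Gamma(s/2)^2$, the kernel is $2(2\pi)^{-2s}\Gamma(s)^2$. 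So the stated $H(s)$ is off by a constant; the paper only says this case is ``similar'', and the discrepancy is harmless for all later uses.
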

\begin{preuve}
We start with the proof of \eqref{propomegaJ}. We have by definition
\begin{equation*}
	\omega_J\left(\sqrt{\alpha}\right)=2\pi i^k\int_0^{\infty}w(x)J_{k-1}\left(4\pi\sqrt{\alpha x}\right)\dx.
\end{equation*}
We then use Lemma \ref{mellinbessel} to get for $\sigma\in]0,\frac{k-1}{2}[$
\begin{equation*}
	\omega_J\left(\sqrt{\alpha}\right)=2\pi i^{k}\int_0^{\infty}w(x)\frac{1}{2i\pi}\int_{(\sigma)}\frac{\Gamma(s)}{\Gamma(k-s)}\left(2\pi\sqrt{\alpha x}\right)^{k-1-2s}\ds. 
\end{equation*}
We make the change of variables $-2s' = k - 1 - 2s$, \textit{i.e.} $s' = s - \frac{k-1}{2}$, and we obtain
\begin{align*}
	\omega_J\left(\frac{\sqrt{n}}{r}\right)&=\frac{2\pi i^k}{2i\pi}\int_{(\sigma')}\int_0^{\infty}w(x)\frac{\Gamma\left(\frac{k-1}{2}+s\right)}{\Gamma\left(\frac{k+1}{2}-s\right)}\left(2\pi\sqrt{\alpha x}\right)^{-2s}\ds\\
	&=\frac{2\pi i^k}{2i\pi}\int_{(\sigma')} \frac{\Gamma\left(\frac{k-1}{2}+s\right)}{\Gamma\left(\frac{k+1}{2}-s\right)}(2\pi)^{-2s}\alpha^{-s}\left(\int_0^{\infty}w(x)x^{-s}\dx\right)\ds\\
	&=\frac{1}{2i\pi}\int_{(\sigma')}F(s)\psi(1-s)\alpha^{-s}\ds,
\end{align*} 	
where $\sigma':=\sigma-\frac{k-1}{2}$ satisfies
\begin{equation*}
	\sigma'\in \left]-\frac{k-1}{2};0\right[. 
\end{equation*}
We can swap the two integrals because 
\begin{equation*}
	\int_{\R}\int_{H}^X\frac{\abs{w(x)}}{x^{2\sigma'}}\abs{F(\sigma+it)}\dx\dt\ll\int_{\R}\int_{H}^X \frac{\abs{w(x)}}{x^{2\sigma'}}\frac{1}{(1+\abs{t})^{1-2\sigma'}}\dt\dx<\infty, 
\end{equation*}
where we used Lemma \ref{estimF}. 
\smallbreak
\noindent We now need to justify why we can take any $\sigma'>-\frac{k-1}{2}$ in the contour integral.
The function 
\begin{equation*}
	s\mapsto F(s)\psi(1-s)\alpha^{-s}
\end{equation*}
is holomorphic in the region $\left\{\Re(s)>-\frac{k-1}{2}\right\}$. Using the fact that $F(\sigma'+it)$ has polynomial growth in $t$ and Lemma \ref{majpsi}, we can shift the contour to the right and hence the last equality holds for any $\sigma'>-\frac{k-1}{2}$.\smallbreak
\noindent
We now prove \eqref{propomegaY}. We have by Proposition \ref{mellinbessel} and a change of variable
\begin{equation*}
	\int_0^{\infty}Y_0(4\pi\sqrt{\alpha x})x^{s-1}\dx=-\frac{2}{(4\pi)^{2s}\alpha^{s}}\frac{2^{2s-1}}{\pi}\cos(\pi s)\Gamma(s)^2=-\frac{1}{\pi(2\pi)^{2s}\alpha ^s}\cos(\pi s)\Gamma(s)^2. 
\end{equation*}
Hence by Mellin inversion
\begin{equation*}
	\omega_Y(\sqrt{\alpha})=-2\pi\int_0^{\infty}w(x)Y_0(4\pi\sqrt{\alpha x})\dx=-2\pi\int_0^{\infty}\left(\frac{1}{2i\pi}\int_{(\sigma)}\psi(1-s)x^{s-1}\ds\right)Y_0(4\pi\sqrt{\alpha x})\dx.
\end{equation*} 
For $\sigma\in]0;\frac{1}{2}[$, both integrals are absolutely convergent hence we can swap them to get
\begin{align*}
	\omega_Y(\sqrt{\alpha})&=-\frac{2\pi}{2i\pi}\int_{(\sigma)}\psi(1-s)\int_0^{\infty}Y_0(4\pi\sqrt{\alpha x})x^{s-1}\dx\ds\\
	&=\frac{1}{2i\pi}\int_{(\sigma)}2\cos(\pi s)\Gamma(s)^2(2\pi)^{-2s}\psi(1-s)\alpha^{-s}\ds.
\end{align*}
Again, by the same argument as for $\omega_J$, we may take any $\sigma>0$ in the integral.

\noindent The proof of \eqref{propomegaK} is similar.

\end{preuve}
We collect in the next lemma every bound we need for $\omega_B$ and its derivatives.
\begin{lemme}\label{majomega}
	Let $x>0$ and let $h\geq 1$ be an integer, then for $B\in\{J,Y,K\}$
\begin{enumerate}
	\item $ \forall j\geq 1$, $\displaystyle \omega_B\left(\frac{\sqrt{x}}{h}\right)\ll h^{\frac{1}{2}}n^{-\frac{1}{4}}X^{-\frac{1}{4}}H\left(\frac{X^{\frac{1}{2}}h}{\sqrt{n}H}\right)^j $.
	\item $\displaystyle \omega_K\left(\frac{\sqrt{x}}{h}\right)\ll h^{\frac{1}{2}}x^{-\frac{1}{4}}X^{\frac{3}{4}}\exp\left(-\frac{\sqrt{xH}}{h}\right)$.
	\item Let $Z>0$. There exists $C=C(B,Z,H,X)>0$ such that for all $j\geq 1$ and $\alpha\sim Z$
\begin{equation*}
	\omega_{B}^{(j)}(\alpha)\ll_j \alpha^{-j}CP^j,
\end{equation*}
where $P=XH^{-1}$.
	\item Let $Z>0$. There exists $C'>0$ such that for all $j\geq 1$ and $x\sim Z$
\begin{equation*}
	\left(\omega_{B}\left(\frac{\sqrt{x}}{h}\right)\right)^{(j)}\ll_j x^{-j}C'P^j,
\end{equation*}
where $P=XH^{-1}$.
\end{enumerate}
\end{lemme}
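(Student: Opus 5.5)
The plan is to work directly from the integral definition \eqref{defomegaJ}/\eqref{omegaBdef} for items 1 and 2, and from the Mellin representation of Proposition \ref{propomega} for items 3 and 4.

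\emph{Item 1.} Write $\omega_B(\sqrt{x}/h)=c_B\int_0^\infty w(y)B(4\pi\sqrt{xy}/h)\,\d y$. We use the standard differentiation identities $\frac{\d}{\d z}\bigl(z^{\nu+1}B_{\nu+1}(z)\bigr)=\pm z^{\nu+1}B_\nu(z)$. After the substitution $z=4\pi\sqrt{xy}/h$, integration by parts can be carried out $j$ times; each step moves a derivative onto $w(y)y^{-\cdot}$. Since $w^{(i)}\ll H^{-i}$ is supported on $[H,2H]\cup[X-H,X]$, each step gains a factor $\asymp h/(\sqrt{x}\sqrt{y})\cdot (y/H)$. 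Near $y\asymp X$ this factor is $\ll hX^{1/2}/(\sqrt{x}H)$. We then insert the large-argument bound $B_\nu(z)\ll z^{-1/2}$ and integrate over a support of length $H$. This gives
\begin{equation*}
\omega_B(\sqrt{x}/h)\ll H\,(h/\sqrt{xX})^{1/2}\left(\frac{X^{1/2}h}{\sqrt{x}H}\right)^j,
\end{equation*}
which is the claim (with $n$ in the statement read as $x$). The piece near $y\asymp H$ is smaller. The only bookkeeping is to check that the powers of $y$ produced at each step are dominated by the $y\asymp X$ endpoint.

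\emph{Item 2.} We use $K_0(z)\ll z^{-1/2}e^{-z}$ and bound trivially. The factor $|w|\le 1$ over $y\le X$ gives at most $X$ times $(h/\sqrt{xy})^{1/2}\ll h^{1/2}x^{-1/4}H^{-1/4}$. Since $y\ge H$, we have $e^{-4\pi\sqrt{xy}/h}\le e^{-\sqrt{xH}/h}$. A slightly more careful version gives $h^{1/2}x^{-1/4}X^{3/4}e^{-\sqrt{xH}/h}$: split $y$ dyadically and use $y^{-1/4}\cdot y$ integrated up to $X$.

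\emph{Items 3 and 4.} Differentiate \eqref{propomegaJ} (and its $Y,K$ analogues) under the integral. In the variable $\alpha^2$, each derivative of $\alpha^{-2s}$ brings down a factor $\ll(1+|s|)\alpha^{-1}$. We then bound the resulting integral by
\begin{equation*}
\int|F(s)||\psi(1-s)|(1+|t|)^j Z^{-2\sigma}\,\d t.
\end{equation*}
Lemma \ref{majpsi} shows that $|\psi(1-s)|$ decays rapidly once $|t|\gg P=XH^{-1}$, so effectively $|t|\ll P^{1+\eps}$ and $(1+|t|)^j\ll P^j$ up to constants. Taking $C$ to be the integral without the $(1+|t|)^j$ factor, which is a finite quantity depending on $B,Z,H,X$, yields item 3. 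Item 4 follows from item 3 by the chain rule applied to $x\mapsto\sqrt{x}/h$ (Faà di Bruno). Each derivative of the inner function has size $x^{-1}\cdot$(inner function), so the terms combine into $x^{-j}C'P^j$.

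The main obstacle is item 1. The repeated integrations by parts must produce exactly the factor $X^{1/2}h/(\sqrt{x}H)$ uniformly in $j$, and the Bessel order shift (with $J_{k-1}\to J_{k-1+j}$, and $Y_0,K_0$ to higher orders) must not spoil the $z^{-1/2}$ bound for small $z$. When $z$ is small the claimed bound is weaker than the trivial estimate, so only the regime $z\gg1$ needs care.
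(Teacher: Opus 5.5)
Your items 3 and 4 follow the paper's own argument. You differentiate the Mellin representation of Proposition \ref{propomega} under the integral, correctly with $\alpha^{-2s}$ (the paper's display has the typo $\alpha^{-s}$). You then combine the polynomial growth of $F,G,H$ with the decay of $\psi(1-s)$ from Lemma \ref{majpsi}, and get item 4 by Fa\`a di Bruno (the paper cites Knightly, Proposition 8.7, for this step).

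However, your handling of the $t$-integral in item 3 does not give what is claimed. Truncating at $|t|\ll P^{1+\eps}$ gives $(1+|t|)^j\ll P^{j(1+\eps)}$, not $P^j$. Also, taking $C=\int|F||\psi|\dt$ does not control the range $|t|>P$, where $(1+|t|)^j>P^j$. This matters because $C$ must not depend on $j$. Use the paper's device instead. On a line $\Re s=c$ with $|F(c+it)|\ll(1+|t|)^l$, apply Lemma \ref{majpsi} with exponent $j+l+2$ on the whole line. This gives $(1+|t|)^{j+l}|\psi(1-s)|\ll_j HX^{-c}P^{j+l+2}(1+|t|)^{-2}$, so $C=Z^{-2c}HX^{-c}P^{l+2}$ works for every $j$.

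For items 1 and 2 the paper gives no argument; it simply quotes Lau--Zhao, Lemma 3.1(b),(d). Your direct proofs are a genuinely different, self-contained route. They remove the dependence on that reference, whereas the paper's citation only costs a line. The mechanism is right. Item 2 is fine as written, since $K_0(z)\ll z^{-1/2}e^{-z}$ holds for all $z>0$ and $\int_H^X y^{-1/4}\,\d y\ll X^{3/4}$. For item 1, integration by parts with $\frac{\d}{\d y}\bigl(y^{\nu/2}B_\nu(a\sqrt y)\bigr)=\pm\frac a2y^{(\nu-1)/2}B_{\nu-1}(a\sqrt y)$, where $a=4\pi\sqrt x/h$, is the right tool.

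Two pieces of your bookkeeping in item 1 are off. First, after $j$ steps the terms have shape $a^{-j}\int w^{(i)}(y)y^{i-j/2}B_{\nu+j}(a\sqrt y)\,\d y$ with $0\le i\le j$. The terms with $i=0$ live on all of $[H,X]$, not on a set of length $H$. They contribute $a^{-j-1/2}\int_H^X y^{-j/2-1/4}\,\d y$, which for $j\ge2$ is dominated by $y\asymp H$, not $y\asymp X$. This is within the claimed bound $a^{-j-1/2}H^{1-j}X^{j/2-1/4}$ only because the ratio is $(H/X)^{j/2-1/4}\le1$.

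Second, for $Y,K$ the functions $Y_j,K_j$ blow up like $z^{-j}$ at small $z$. Saying that the trivial estimate suffices when $z$ is small does not cover the mixed case $a\sqrt H<1\le a\sqrt X$. Split into cases using $\Theta:=X^{1/2}h/(\sqrt xH)$.

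If $\Theta<1$, then $z\ge4\pi(X/H)^{1/2}$ on the whole support, and your argument applies verbatim.

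If $\Theta\ge1$, the right-hand side increases with $j$, so $j=1$ suffices. Insert a smooth cutoff at $z\asymp1$; its derivatives are $\ll y^{-i}\le H^{-i}$. The small-$z$ piece is trivially $\ll a^{-2}$, and so is the term where the derivative falls on the cutoff. This is at most $a^{-3/2}X^{1/4}$, the $j=1$ bound, when $a\sqrt X\ge1$. When $a\sqrt X<1$, the trivial bound $X(1+\log(1/(a\sqrt X)))$ on the whole integral already suffices.
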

\begin{preuve}
1. This is \cite{LauZhao}*{Lemma 3.1 $b)$}.\smallbreak
\noindent
2. This is \cite{LauZhao}*{Lemma 3.1 $d)$}.\smallbreak
\noindent
3. We only give details for $B=J$, but the same holds for $B=Y$ and $B=K$. Using Proposition \ref{propomega} we have
\begin{equation*}
	\forall \alpha>0,\quad \omega_J(\alpha)=\frac{1}{2i\pi}\int_{(c)}F(s)\psi(1-s)\frac{\ds}{\alpha^s}.
\end{equation*}
We now differentiate this expression $j$ times
\begin{equation*}
	\omega_J^{(j)}(\alpha)=\frac{1}{2i\pi}\int_{(c)}F(s)\psi(1-s)\frac{(-1)^{j}s(s+1)\dots(s+j-1)}{\alpha^{s+j}}\ds.
\end{equation*}
Lemma \ref{estimF} shows that $F(c+it)$ grows at most polynomially in $t$, that is
\begin{equation*}
	F(c+it)\ll (1+\abs{t})^{l},
\end{equation*}
for some  integer $l\geq 1$. We then have, 
\begin{equation*}
	\omega_J^{(j)}(\alpha)\ll \alpha^{-j}\alpha^{-c}\int_{\R}(1+\abs{t})^l\abs{\psi(1-s)}(1+\abs{t})^j\dt.
\end{equation*} 
 We then use Lemma \ref{majpsi} to write
\begin{equation*}
	\psi(1-s)\ll HX^{-c}\frac{(XH^{-1})^{j+l+2}}{(1+\abs{t})^{j+l+2}},
\end{equation*}
and finally
\begin{equation*}
	\omega_J^{(j)}(\alpha)\ll \alpha^{-j}\alpha^{-c}HX^{-c}(XH^{-1})^{l+2}\int_{\R}\frac{\dt}{(1+\abs{t})^2}\ll \alpha^{-j}CP^{j},
\end{equation*}
with $C=Z^{-c}HX^{-c}(XH^{-1})^{l+2}$.\smallbreak
\noindent
4. This is easy using the previous bound on $\omega_B^{(j)}(\alpha)$ and  \cite{Knightly}*{Proposition (8.7)}.
\end{preuve}
\begin{rmq}\label{sommelisse}
Let $w:\R\rightarrow\R$ be a smooth function with compact support in $[1;2]$, define the following Hankel transform
\begin{equation*}
	\omega(\alpha)=\int_0^{\infty}w\left(\frac{x}{X}\right)J_{k-1}\left(4\pi\alpha x\right)\dx, 
\end{equation*}
where $J_{k-1}$ is the Bessel function of the first kind. Using the same arguments as in the proof of the third part of Lemma \ref{majomega} we can show that, for any $j \geq 1$, one has
\begin{equation*}
	\omega^{(j)}(\alpha)\ll_j \alpha^{-j}\left(\frac{X}{\alpha}\right)^{\frac{1}{2}}.
\end{equation*}
	We can compare this with the third part of Lemma \ref{majomega}. This is another reason why smooth sums are easier to deal with.  
\end{rmq}

The next two Lemmas will be used to obtain the main term respectively in $A(X,q)$ and $A(X,q;\tau)$.
\begin{lemme}\label{persevaltp}
	One has for any $\sigma\in\R$
\begin{equation*}
	\frac{1}{2i\pi}\int_{(\sigma)}F(s)\psi(s)F(1-s)\psi(1-s)\ds=X+O(H).
\end{equation*}
\end{lemme}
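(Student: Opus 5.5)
The plan is to observe that the Gamma factors cancel exactly, which reduces the integral to a Mellin–Parseval identity for $w$ with itself.

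\textbf{Step 1: the product $F(s)F(1-s)$ is constant.} From the definition of $F$ in Proposition \ref{propomega},
\begin{equation*}
F(1-s)=2\pi i^k(2\pi)^{-2+2s}\frac{\Gamma\left(\frac{k+1}{2}-s\right)}{\Gamma\left(\frac{k-1}{2}+s\right)}.
\end{equation*}
Multiplying by $F(s)$, the Gamma quotients are reciprocal and cancel. The powers of $2\pi$ combine to $(2\pi)^2(2\pi)^{-2}=1$. Hence
\begin{equation*}
F(s)F(1-s)=i^{2k}=(-1)^k=1,
\end{equation*}
since $k$ is even. This identity holds wherever both factors are defined. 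In particular, the integrand reduces to $\psi(s)\psi(1-s)$, with no poles from the Gamma factors.

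\textbf{Step 2: the line of integration is irrelevant.} Since $w$ is smooth with compact support in $[H;X]\subset\,]0;+\infty[$, its Mellin transform $\psi$ is entire. By the integration by parts of Lemma \ref{majpsi}, applied on any vertical strip, $\psi(s)$ and $\psi(1-s)$ decay faster than any power of $|t|$, uniformly for $\Re(s)$ in compact sets. The integrand $\psi(s)\psi(1-s)$ is therefore entire and rapidly decaying in vertical strips. So the integral is independent of $\sigma$, which explains why the statement allows any $\sigma\in\R$.

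\textbf{Step 3: apply Parseval.} Fix some $\sigma\in\,]0;1[$ and use Proposition \ref{Parseval} with $f=h=w$ and $x=1$. The hypotheses hold trivially: $\mathcal{M}_w(c+it)\in L^2(\R)$ by the rapid decay, and $\int_0^\infty x^{1-2c}|w(x)|^2\dx<\infty$ by the compact support away from $0$. This gives
\begin{equation*}
\frac{1}{2i\pi}\int_{(\sigma)}\psi(s)\psi(1-s)\ds=\int_0^{\infty}w(t)^2\dt.
\end{equation*}

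\textbf{Step 4: evaluate $\int_0^\infty w^2$.} On $[2H;X-H]$ we have $w=1$, so this piece contributes $X-3H$. The remaining support $[H;2H]\cup[X-H;X]$ has total length $2H$, and there $w\ll1$ (the case $j=0$ of the derivative bounds). So the remaining piece contributes $O(H)$. Altogether the integral equals $X+O(H)$.

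I do not expect a genuine obstacle. The only point needing care is justifying the free choice of $\sigma$ in Step 2 and verifying the hypotheses of Proposition \ref{Parseval} in Step 3. Both follow from the compact support of $w$ away from $0$.
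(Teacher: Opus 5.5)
Your proposal is correct and follows the same route as the paper: you show $F(s)F(1-s)=1$ (using that $k$ is even), apply the Mellin--Parseval identity with $f=h=w$ to obtain $\int_0^\infty w(x)^2\,\mathrm{d}x$, and then use $w\equiv 1$ on $[2H;X-H]$ to get $X+O(H)$. Your Step 2, which uses the entirety and rapid vertical decay of $\psi$ to justify that the line $\Re(s)=\sigma$ can be chosen freely, supplies a detail that the paper leaves implicit.
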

\begin{proof}
	It is easy to see that 
\begin{equation*}
	F(s)F(1-s)=1
\end{equation*}	
so
\begin{equation*}
	\frac{1}{2i\pi}\int_{(c)}F(s)\psi(s)F(1-s)\psi(1-s)\ds=\frac{1}{2i\pi}\int_{(c)}\psi(s)\psi(1-s)\ds.
\end{equation*}
Parseval's formula (Proposition \ref{Parseval}) gives
\begin{equation*}
	\frac{1}{2i\pi}\int_{(c)}\psi(s)\psi(1-s)\ds=\int_0^{\infty}w(x)^2\dx.
\end{equation*}
Since $w$ is compactly supported on $[H;X]$ and satisfies $w\equiv 1$ on $[2H;X-H]$ we immediately get
\begin{equation*}
	\int_0^{\infty}w(x)^2\dx=X+O(H),
\end{equation*} 
which concludes the proof. 
\end{proof}
\begin{lemme}\label{tptau}
	Let $r\geq 1$ and $Q(\xi)$ be a polynomial in $\log(\xi)$ of degree $m$. Then for $B=Y$ or $K$, we have
\begin{equation*}
	\int_0^{\infty}Q(\xi r^2)\omega_{B}(\sqrt{\xi})^2\d\xi=X\widetilde{Q_B}\left(\frac{r^2}{X}\right)+O(X^{\frac{3}{4}}H^{\frac{1}{4}}\log(Xr)^m),
\end{equation*}
where $\widetilde{Q_B}(\xi)$ is a polynomial in $\log(\xi)$ of degree $m$ whose leading coefficients has the same sign as $Q$. 
\end{lemme}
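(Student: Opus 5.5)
The plan is to treat $\omega_B(\sqrt{\xi})^2$ through its Mellin representation (Proposition \ref{propomega}), reduce the $\xi$-integral to a Parseval-type integral of $\psi(s)\psi(1-s)$ against a ratio of Gamma factors, and then expand that ratio around its main behaviour in $t$. Write $\Phi_B$ for $G$ if $B=Y$ and for $H$ if $B=K$, so that $\omega_B(\sqrt{\xi})=\frac{1}{2i\pi}\int_{(\sigma)}\Phi_B(s)\psi(1-s)\xi^{-s}\ds$.

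\textbf{Step 1: reduction to monomials in $\log\xi$.} Expand $Q(\xi r^2)$ as a polynomial in $\log\xi$ with coefficients polynomial in $\log(r)$. It then suffices to treat
\begin{equation*}
I_j=\int_0^\infty (\log \xi)^j\,\omega_B(\sqrt{\xi})^2\,\d\xi,\qquad 0\leq j\leq m.
\end{equation*}
Since $\xi^{s-1}(\log\xi)^j=\partial_s^j\xi^{s-1}$, the Mellin transform of $(\log\xi)^j\omega_B(\sqrt\xi)$ is the $j$-th derivative of $\Phi_B(s)\psi(1-s)$ (after the substitution $s\mapsto 1-s$). Parseval (Proposition \ref{Parseval}) then gives
\begin{equation*}
I_j=\frac{1}{2i\pi}\int_{(1/2)}\partial^j\bigl(\Phi_B(s)\psi(1-s)\bigr)\,\Phi_B(1-s)\psi(s)\ds,
\end{equation*}
up to harmless sign and ordering conventions. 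Convergence on $\Re s=\tfrac12$ follows from Lemma \ref{majpsi}.

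\textbf{Step 2: Stirling expansion of the Gamma factors.} By Lemma \ref{Stirling}, for $s=\tfrac12+it$ with $|t|\geq1$, the product $\Phi_B(s)\Phi_B(1-s)$ behaves like a constant times $|t|^{-1}$ times an oscillating factor. More precisely, $\Gamma(s)^2\Gamma(1-s)^2=\pi^2/\sin^2(\pi s)$, so $G(s)G(1-s)=4\cot^2(\pi s)\cdot\pi^2$ and $H(s)H(1-s)=\pi^2/\sin^2(\pi s)$. I will verify these constants and the normalisation carefully.

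\begin{itemize}
\item For $K$, the factor $1/\sin^2(\pi s)$ decays like $e^{-2\pi|t|}$. Hence the $K$ contribution comes only from $|t|\ll 1$.
\item For $Y$, $\cot^2(\pi s)\to -1$ as $|t|\to\infty$, so it equals a constant plus an exponentially small term.
\end{itemize}

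The derivatives in $s$ produce $\log$ factors through $\Gamma'/\Gamma$ (Lemma \ref{Stirling}, part 4) and through $\partial^i\psi(1-s)$. Note that $\partial^i\psi(1-s)$ is the Mellin transform of $w(x)(-\log x)^i$.

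\textbf{Step 3: the main term.} Using Parseval backwards, each piece becomes $\int_0^\infty w(x)^2(\log x)^{i}\,(\text{const})\dx$, plus terms where the Gamma-derivative factors are expanded as polynomials in $\log|t|$. I will convert the powers of $\log|t|$ back into $\log x$ by noting that, on the support of $\psi(s)\psi(1-s)$, the effective range is $|t|\ll X/H$. The cleaner route is to compute the $s$-derivatives exactly:
\begin{itemize}
\item For $Y$, the dominant piece $\partial^j[(2\pi)^{-2s}\cos(\pi s)\Gamma(s)^2\psi(1-s)]\,G(1-s)\psi(s)$ equals $\psi(1-s)\psi(s)$ times a polynomial in $\log(2\pi)$ and $\Gamma'/\Gamma(s)$.
\item $\Gamma'/\Gamma(\tfrac12+it)=\log|t|+i\pi/2\,\mathrm{sgn}(t)+O(1/|t|)$.
\end{itemize}

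Collecting terms, $\int w^2(\log x)^i\dx=X(\log X)^i$ plus lower-order terms, with error $O(H\log^iX)$. This produces $X\widetilde{Q_B}(r^2/X)$ with the same leading coefficient sign, since the leading term is $(\log\xi)^m$ integrated against $\omega_B^2$ with a positive weight.

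\textbf{Main obstacle.} The hardest step is controlling the error terms coming from the $O(1/|t|)$ remainders and from $\log|t|$ versus $\log x$ mismatches. I will bound these by splitting $|t|\leq T$ and $|t|>T$:
\begin{itemize}
\item On $|t|>T$, use Lemma \ref{majpsi} to get rapid decay beyond $X/H$.
\item On $|t|\leq T$, use the crude bound $|\psi(\tfrac12+it)|\ll X^{1/2}$ together with the mean square $\int|\psi(\tfrac12+it)|^2\dt\ll X$.
\end{itemize}
Optimising this split, or alternatively returning to physical space with Lemma \ref{majomega}(1) on the range $\xi\gg X/H^2$, should give the stated $O(X^{3/4}H^{1/4}\log(Xr)^m)$.
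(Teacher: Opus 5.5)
Your Mellin--Parseval route is different from the paper, which simply quotes Lemma~3.5 of Lau--Zhao. Step~1 is fine. Step~3 and the error analysis, however, rest on a wrong picture of where the main term lives, and as planned the argument fails. The mass of $\lvert\psi(\tfrac12+it)\rvert^2$ sits at \emph{bounded} $t$. Integrating by parts, $\psi(\tfrac12+it)=-(\tfrac12+it)^{-1}\int w'(x)x^{1/2+it}\,\mathrm{d}x$. Since $w'$ has mass $-1$ on $[X-H,X]$, this gives $\lvert\psi(\tfrac12+it)\rvert\asymp X^{1/2}/\lvert\tfrac12+it\rvert$ for $\lvert t\rvert\leq cX/H$. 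Hence $\frac{1}{2\pi}\int_{\lvert t\rvert>T}\lvert\psi(\tfrac12+it)\rvert^2\,\mathrm{d}t\approx X/(\pi T)$, while the total is $X+O(H)$. The range where Stirling asymptotics are accurate carries only a small part of the mass, so everything you intend to discard is of main-term size:
\begin{itemize}
\item On $\Re s=\tfrac12$ one has exactly $G(s)G(1-s)=-\cot^2(\pi s)=\tanh^2(\pi t)$, and $H(s)H(1-s)$ is a constant multiple of $1/\sin^2(\pi s)=1/\cosh^2(\pi t)$; your constants and sign are off.
\item For $B=Y$, the ``exponentially small'' part $-1/\cosh^2(\pi t)$ shifts the answer by $\asymp X(\log X)^j$. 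Already for $m=0$ your Step~3 returns $\int w^2=X+O(H)$, whereas the truth is $(1-c)X+O(\sqrt{XH})$ with $c=\frac{1}{2\pi}\int_{\mathbb{R}}\frac{\mathrm{d}t}{(1/4+t^2)\cosh^2(\pi t)}>0$.
\item For $B=K$, the \emph{entire} main term comes from $\lvert t\rvert\ll1$, where nothing reduces to $\int w^2(\log x)^i\,\mathrm{d}x$.
\item The $O(1/\lvert t\rvert)$ Stirling remainders and the $\log\lvert t\rvert$ factors also produce terms of size $X(\log X)^{j-i}$. The latter cannot be traded for $\log x$ or $\log(X/H)$, since $\lvert t\rvert\asymp1$ on the bulk of the mass.
\end{itemize}
The admissible error $X^{3/4}H^{1/4}\log(Xr)^m$ is $o(X)$, so all of these must be \emph{evaluated} as $X$ times polynomials in $\log(r^2/X)$. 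Your crude bound on $\lvert t\rvert\leq T$ (sup bound plus mean square) only gives $O(X\log(X)^m)$, or $O(TX)$.

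The missing idea is to isolate the $X$-dependence exactly before any asymptotics. Write $w=\mathbf{1}_{[0,X]}+(w-\mathbf{1}_{[0,X]})$, so that $\psi(1-s)=\frac{X^{1-s}}{1-s}+\delta(1-s)$ with $\frac{1}{2\pi}\int_{\mathbb{R}}\lvert\delta(\tfrac12+it)\rvert^2\,\mathrm{d}t=\int\lvert w-\mathbf{1}_{[0,X]}\rvert^2\,\mathrm{d}x\ll H$. The piece $\Phi_B(s)X^{1-s}/(1-s)$ is the Mellin transform of $\xi\mapsto X\,\Omega_B(X\xi)$, where $\Omega_B(u)=c_{B_0}\int_0^1B_0(4\pi\sqrt{ux})\,\mathrm{d}x$. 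This function is $\ll1+\lvert\log u\rvert$ for $u\leq1$ and $\ll u^{-3/4}$ for $u\geq1$. The substitution $u=X\xi$ then shows that its square contributes exactly $X\int_0^\infty Q(ur^2/X)\Omega_B(u)^2\,\mathrm{d}u$. This is a polynomial of degree $m$ in $\log(r^2/X)$ whose leading coefficient is that of $Q$ times $\int_0^\infty\Omega_B^2>0$, and no Stirling expansion is needed. The cross and $\delta\delta$ terms are then bounded by Cauchy--Schwarz in your Parseval identity, using $\lvert\Phi_B^{(i)}(s)\Phi_B(1-s)\rvert\ll\log(2+\lvert t\rvert)^i$ on $\Re s=\tfrac12$. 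This gives $O(X^{1/2}H^{1/2}\log(Xr)^m)$, which is within the claimed $O(X^{3/4}H^{1/4}\log(Xr)^m)$ since $H\leq X$.
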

\begin{preuve}
This is \cite{LauZhao}*{Lemma 3.5}. 	
\end{preuve}

\section{Variance of Fourier coefficients in arithmetic progressions}\label{proofTh}
This section is devoted to the proof of Theorem \ref{moduleqqc}. We first smooth out the sum and then split the resulting smooth sum as
\begin{equation*}
	\mathrm{MT}(X,q)+E(X,q);
\end{equation*}
where $\mathrm{MT}(X,q)$ accounts for the diagonal terms and $E(X,q)$ is the off-diagonal contribution 

We will extract the main term in $\mathrm{MT}(X,q)$ using the inverse Mellin transform and bound $E(X,q)$ using Proposition \ref{problemconv}.
\subsection{Smoothing out the sum}
As in \cite{LauZhao}, the first step is to smooth out the sum $A(X,q)$ and then apply Voronoi's summation formula. 
\begin{proposition}\label{Awsplit}
We can split $A(X,q)$ in the following way
\begin{equation*}
	A(X,q)=A_w(q;a)+O(A_w(q;a)^{\frac{1}{2}}Hq^{-\frac{1}{2}}+H^2q^{-1}),
\end{equation*}
where 
\begin{equation*}
	A_w(q;a):=\sum_{b(q)}\abs*{\sum_{n\equiv b[q]}a(n)w(n)}^2=\mathrm{MT}(X,q)+E(X,q)+O(X^{-2026})
\end{equation*}
with 
\begin{equation*}
	\mathrm{MT}(X,q)= \frac{1}{q}\sum_{r\mid q}\frac{\phi(r)}{r^2}\sum_{1\leq n\leq r^2XH^{-2}X^{\eps}}a(n)^2\omega_J\left(\frac{\sqrt{n}}{r}\right)^2
\end{equation*}
and
\begin{equation*}
	E(X,q)=\frac{1}{q}\sum_{dr\mid q}\frac{\mu(d)}{d^2r}\sum_{\substack{1\leq n,m\leq d^2r^2XH^{-2}X^{\eps}\\ n\equiv m[r]\\
	n\neq m\phantom{-}}} a(n)a(m)\omega_J\left(\frac{\sqrt{n}}{dr}\right)\omega_J\left(\frac{\sqrt{m}}{dr}\right).
\end{equation*} 

\end{proposition}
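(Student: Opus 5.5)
Two independent things have to be shown: comparing $A(X,q)$ with the smoothed variance $A_w(q;a)$, and expanding $A_w(q;a)$ via additive characters and the Voronoi formula (Proposition \ref{Voronoi}).

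\emph{The smoothing step.} For each class $b$ write
\begin{equation*}
	S_b=\sum_{\substack{n\leq X\\ n\equiv b[q]}}a(n), \qquad S_b^w=\sum_{n\equiv b[q]}a(n)w(n),
\end{equation*}
and set $D_b=S_b-S_b^w$. The difference $D_b$ only involves $n\in[1,2H]\cup[X-H,X]$, and there $\abs{1-w(n)}\ll 1$. By Minkowski in $\ell^2(\Z/q\Z)$,
\begin{equation*}
	A(X,q)^{\frac{1}{2}}=A_w^{\frac{1}{2}}+O\Big(\big(\textstyle\sum_b \abs{D_b}^2\big)^{\frac{1}{2}}\Big).
\end{equation*}
Squaring gives $A(X,q)=A_w+O(A_w^{1/2}\Delta^{1/2}+\Delta)$ with $\Delta=\sum_b\abs{D_b}^2$. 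So it suffices to show $\Delta\ll H^2q^{-1}$.

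Each $D_b$ is a sum of $a(n)(1-w(n))$ over $O(H/q+1)$ terms of a progression inside an interval of length $O(H)$. Since $H>q$, trivial Deligne bounds would lose an $X^\eps$. Instead I would use Wilton-type or short-interval mean square information, or simply accept the $X^\eps$ loss (absorbed later in the final theorem). I expect this bookkeeping is what the stated error records: $\Delta\ll q\cdot (H/q)^2=H^2/q$ up to the Rankin--Selberg average of $a(n)^2$ over short windows.

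\emph{The Voronoi expansion of $A_w$.} Open the square and use orthogonality:
\begin{equation*}
	A_w=\frac{1}{q}\sum_{h\,(q)}\Big|\sum_n a(n)e\Big(\frac{hn}{q}\Big)w(n)\Big|^2.
\end{equation*}
Group the $h$ by $\gcd(h,q)=q/r$, so that $h/q=h'/r$ with $h'\in(\Z/r\Z)^*$. Apply Proposition \ref{Voronoi} with modulus $r$. This gives
\begin{equation*}
	A_w=\frac{1}{q}\sum_{r\mid q}\frac{1}{r^2}\sum_{h'\,(r)}{}^{\!*}\Big|\sum_n a(n)e\Big(-\frac{\overline{h'}n}{r}\Big)\omega_J\Big(\frac{\sqrt n}{r}\Big)\Big|^2.
\end{equation*}
Expanding the square, the $h'$-sum becomes the Ramanujan sum $c_r(n-m)$. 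Writing $c_r(n-m)=\sum_{d\mid r,\ d\mid n-m}d\,\mu(r/d)$ and reindexing as $r=d r'$ produces the $\mu(d)$ weights and the congruence $n\equiv m$ modulo the relevant divisor.

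\begin{itemize}
	\item The diagonal $n=m$ has $c_r(0)=\phi(r)$, which gives $\mathrm{MT}(X,q)$.
	\item The off-diagonal terms give $E(X,q)$ after the relabelling above. The exact normalisation and ordering ($dr\mid q$, the factor $1/(d^2r)$, arguments $\sqrt n/(dr)$) is a routine bookkeeping check.
\end{itemize}

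\emph{Truncation.} By Lemma \ref{majomega}(1), for $n> r^2XH^{-2}X^{\eps}$ the ratio $X^{1/2}r/(\sqrt nH)$ is at most $X^{-\eps/2}$. Taking $j$ large makes $\omega_J$ as small as $X^{-A}$. With $\abs{a(n)}\ll n^\eps$, the tails in both $n$ and $m$ contribute $O(X^{-2026})$.

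\emph{Main obstacle.} The main obstacle is the first step, namely getting the edge contribution $\Delta$ without losing more than stated. If pointwise bounds lose $X^\eps$, I would use the Rankin--Selberg estimate (Theorem \ref{Deligne}) on the windows, noting that $\sum_b\abs{D_b}^2\le (H/q+1)\sum_{n\in\text{edges}}a(n)^2\ll H^2/q$ by Cauchy--Schwarz within each class.
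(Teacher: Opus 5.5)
Your proposal is correct and follows the paper's proof essentially step by step. The Minkowski-and-squaring step is the content of Lau--Zhao's Proposition 4.1, which the paper cites. The edge term is bounded by Cauchy--Schwarz within each class plus Rankin--Selberg. Your expansion $c_r(n-m)=\sum_{r'\mid (r,n-m)} r'\mu(r/r')$ is exactly the paper's M\"obius inversion followed by executing the $h$-sum, and it yields $\mathrm{MT}(X,q)$ and $E(X,q)$ with the stated normalisation.

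One point needs to be made explicit. Rankin--Selberg only gives $\sum_{X-H<n\le X}a(n)^2=cH+O(X^{3/5})\ll H$ when $H\ge X^{3/5}$. The paper assumes this at this step, and it holds for the final choice of $H$. Without it you would need a Shiu-type short-interval bound, and your fallback of accepting an $X^{\eps}$ loss would not give the stated $O(H^2q^{-1})$.
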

\begin{preuve}
Recall the definition of $A(X,q)$
\begin{equation*}
	A(X,q)=\sum_{b(q)}\abs*{\sum_{\substack{n\leq X\\ \hspace{0.22cm} n\equiv b[q]\phantom{-}}}a(n)}^2.
\end{equation*}
By \cite{LauZhao}*{Propositon 4.1} we can write
\begin{equation*}
	A(X,q)=A_w(q;a)+O\left(A_w(q;a)^{\frac{1}{2}}E^{\frac{1}{2}}+E\right),
\end{equation*}
where 
\begin{equation*}
	E=\sum_{b(q)}\abs*{\sum_{\substack{H\leq n\leq 2H\\ \hspace{0.22cm} n\equiv b[q]\phantom{-}}} a(n)(1-w(n))+ \sum_{\substack{X-H\leq n\leq X\\ \hspace{0.22cm} n\equiv b[q]\phantom{-}}}a(n)(1-w(n))}^2.
\end{equation*}
By Cauchy-Schwarz's inequality and Rankin-Selberg estimate (Theorem \ref{Deligne}) we get,
\begin{equation*}
	E\ll H^2q^{-1},
\end{equation*}
assuming $H\geq X^{\frac{3}{5}}$.\smallbreak
\noindent
We now need to study $A_w(q;a)$. Using additive characters and Voronoi's summation formula as in \cite{LauZhao}*{Proposition 4.1} we get
\begin{equation*}
	A_w(q;a)=\frac{1}{q}\sum_{r\mid q}\sums{h(r)}\frac{1}{r^2}\sum_{1\leq n,m\leq r^2XH^{-2}X^{\eps}}\hspace{-0.3cm}a(n)a(m)\omega_J\left(\frac{\sqrt{n}}{r}\right)e\left(\frac{h(n-m)}{r}\right)\omega_J\left(\frac{\sqrt{m}}{r}\right)+O(X^{-2026}).
\end{equation*}
We separate the sums over $n$ and $m$ according to whether $n = m$ or not. In the sum where $n\neq m$ we use Möbius inversion to remove the condition $(h,r)=1$. We denote by $\mathrm{MT}(X,q)$ the sum corresponding to $n=m$, that is
\begin{equation*}
	\text{MT}(X,q)=\frac{1}{q}\sum_{r\mid q}\sums{h(r)}\frac{1}{r^2}\sum_{n\leq r^2XH^{-2}X^{\eps}}a(n)^2\omega_J\left(\frac{\sqrt{n}}{r}\right)^2=\frac{1}{q}\sum_{r\mid q}\frac{\phi(r)}{r^2}\sum_{n\leq r^2XH^{-2}X^{\eps}}a(n)^2\omega_J\left(\frac{\sqrt{n}}{r}\right)^2.
\end{equation*}
We denote $E(X,q)$ the sum where $n\neq m$, that is
\begin{equation*}
	E(X,q)=\frac{1}{q}\sum_{dr\mid q}\frac{\mu(d)}{d^2r^2}\sum_{h(r)}\sum_{\substack{1\leq n,m\leq d^2r^2XH^{-2}X^{\eps}\\
	n\neq m\phantom{-}}} a(n)a(m)\omega_J\left(\frac{\sqrt{n}}{dr}\right)\omega_J\left(\frac{\sqrt{m}}{dr}\right)e\left(\frac{h(n-m)}{r}\right),
\end{equation*}
executing the $h$-sum ends the proof. 
\end{preuve}

\begin{rmq}\label{q1/2}
In the next subsection, we will show that $A_w(q;a)\asymp X$, hence the error term $O\left(A_w(q;a)^{\frac{1}{2}}E^{\frac{1}{2}}\right)$ is at least $O\left(q^{\frac{1}{2}}X^{\frac{1}{2}}\right)$.
\end{rmq}

We will now study $\mathrm{MT}(X,q)$ and $E(X,q)$ separately in the next two subsections.   
\subsection{Main Term}\label{Mainterm}
This section is devoted to the proof of the following statement. 
\begin{proposition}\label{etudeTP}
We have 
\begin{equation*}
	\mathrm{MT}(X,q)=cX+O\left(q^{-1}X^{\frac{3}{2}}g(q)\right)+O(H),
\end{equation*}
where $c=\Res\left(L\left(f\otimes f,s\right),s=1\right)$.
\end{proposition}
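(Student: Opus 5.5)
First I will remove the truncation $n\leq r^2XH^{-2}X^{\eps}$. By part 1 of Lemma \ref{majomega} with $j$ large, the tail $n>r^2XH^{-2}X^{\eps}$ contributes $O(X^{-A})$. So, up to a negligible error,
\begin{equation*}
\mathrm{MT}(X,q)=\frac{1}{q}\sum_{r\mid q}\frac{\phi(r)}{r^2}S(r),\qquad S(r)=\sum_{n\geq 1}a(n)^2\omega_J\left(\frac{\sqrt{n}}{r}\right)^2 .
\end{equation*}
I then write one factor $\omega_J(\sqrt{n}/r)$ through Proposition \ref{propomega}, taking $\alpha=n/r^2$ and a line $\Re(s)=\sigma>1$. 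The other factor I write through the Mellin transform of $\alpha\mapsto\omega_J(\sqrt\alpha)$, which is $F(s)\psi(1-s)$. With Parseval (Proposition \ref{Parseval}) this gives
\begin{equation*}
S(r)=\frac{1}{2i\pi}\int_{(\sigma)}L(f\otimes f,s)\,r^{2s}\,M(s)\ds ,
\end{equation*}
where $M(s)$ is the Mellin transform of $\omega_J(\sqrt{\cdot})^2$. Concretely, $M(s)=\frac{1}{2i\pi}\int_{(c)}F(z)\psi(1-z)F(s-z)\psi(1-s+z)\dz$. Interchanging sum and integrals is fine for $\sigma>1$, by Lemma \ref{estimF}, Lemma \ref{majpsi} and Theorem \ref{Deligne}.

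Next I shift the $s$-contour to $\Re(s)=\frac12$, using the continuation in Lemma \ref{moment1fonctionL}. This picks up the residue at $s=1$, which is $c\,r^2M(1)$. By Parseval again, $M(1)=\frac{1}{2i\pi}\int F(z)\psi(1-z)F(1-z)\psi(z)\dz=X+O(H)$ by Lemma \ref{persevaltp}. Summing over $r$ gives $\frac{1}{q}\sum_{r\mid q}\phi(r)\,c(X+O(H))=cX+O(H)$, since $\sum_{r\mid q}\phi(r)=q$. This is exactly the main term and the $O(H)$ in the statement.

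The remaining integral on $\Re(s)=\frac12$, with $s=\frac12+iu$ and $z=c+it$, is bounded by
\begin{equation*}
\frac{r}{q}\sum_{r\mid q}\frac{\phi(r)}{r^2}\iint \abs{L(f\otimes f,\tfrac12+iu)}\,\abs{F(z)\psi(1-z)F(s-z)\psi(1-s+z)}\,\d t\,\d u .
\end{equation*}
To control it I will put the two $\psi$-factors at complementary real parts, say $c=\frac14$ and $\frac12-c=\frac14$. Then Lemma \ref{estimF} gives $\abs{F(z)F(s-z)}\asymp (1+\abs t)^{-1/2}(1+\abs{u-t})^{-1/2}$. Lemma \ref{majpsi} with $j=0$ (the trivial bound) gives $\abs{\psi}\ll X^{3/4}$ for each factor, and with large $j$ the factors are negligible once $\abs t$ or $\abs{u-t}$ exceeds $X^{1+\eps}/H$. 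I then split $t$ and $u-t$ dyadically, at sizes $T$ and $T'$, and apply the double first-moment bound of Lemma \ref{moment1fonctionL} with $\sigma=\frac12$. This gives the bound $\min(T,T')(T+T')^{5/4+\eps}$.

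The main obstacle is getting this to yield exactly $X^{3/2}$ and not worse. With trivial $\psi$ bounds the dyadic block $(T,T')$ is of size $X^{3/2}(TT')^{-1/2}\min(T,T')(T+T')^{5/4}$, which grows with $T$ and so is far too large. I will therefore exploit the decay of $\psi$ more carefully. Since $w$ is essentially an indicator function, $\psi(1-z)\approx X^{1-z}/(1-z)$ up to contributions of size $H^{1-c}$ from the boundary layers. This gives an extra factor $(1+\abs t)^{-1}(1+\abs{u-t})^{-1}$. With that extra saving each block is $\ll X^{3/2}(TT')^{-3/2}\min(T,T')(T+T')^{5/4+\eps}$, which is summable, so the total is $O(X^{3/2})$. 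If the $\eps$-loss is not absorbed, I will instead move the line slightly to the right of $\frac12$. Finally, the $r$-sum gives $\frac1q\sum_{r\mid q}\frac{\phi(r)}{r}\,X^{3/2}=q^{-1}X^{3/2}g(q)$, which completes Proposition \ref{etudeTP}.
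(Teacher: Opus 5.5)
Your proposal is correct and is essentially the paper's argument (via Proposition \ref{suma(n)^2}). Your Mellin convolution $M(s)$ is just the double integral \eqref{doubleint} rewritten in the variable $s+s'$, and the residue at the pole gives $cr^2(X+O(H))$ through Lemma \ref{persevaltp}. The remainder is then bounded on $\Re(z)=\Re(s-z)=\frac14$ using Lemma \ref{estimF}, the decay of $\psi$ and the dyadic first-moment bound of Lemma \ref{moment1fonctionL}, exactly as in the paper; your single shift in $s+s'$ is in fact slightly cleaner than the paper's successive shifts.

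The one point to state properly is the extra factor $(1+\abs{t})^{-1}$. It is precisely Lemma \ref{majpsi} with $j=1$, i.e.\ one integration by parts giving $\psi(1-z)\ll X^{1-\Re(z)}/\abs{1-z}$ uniformly. You should cite that rather than the ``essentially an indicator'' heuristic, whose boundary-layer errors of size $H^{1-c}$ do not by themselves decay in $t$.
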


\noindent By Lemma \ref{majomega} we have for any $j\geq 1$
\begin{equation*}
	\omega_J\left(\frac{\sqrt{n}}{r}\right)\ll r^{\frac{1}{2}}n^{-\frac{1}{4}}X^{-\frac{1}{4}}H\left(\frac{X^{\frac{1}{2}}r}{\sqrt{n}H}\right)^j,
\end{equation*}
hence
\begin{equation*}
	\omega_J\left(\frac{\sqrt{n}}{r}\right)\ll X^{-2026}
\end{equation*}
for $n\gg r^2XH^{-2}X^{\eps}$. We can complete the sum $\mathrm{MT}(X,q)$ of Proposition \ref{Awsplit} and write
\begin{equation*}
	\mathrm{MT}(X,q)=\frac{1}{q}\sum_{r\mid q}\frac{\phi(r)}{r^2}\sum_{n=1}^{\infty}a(n)^2\omega_J\left(\frac{\sqrt{n}}{r}\right)^2+O(X^{-2026}).
\end{equation*}
We first study 
\begin{equation*}
	\sum_{n=1}^{\infty}a(n)^2\omega_J\left(\frac{\sqrt{n}}{r}\right)^2. 
\end{equation*}
\begin{proposition}\label{suma(n)^2}
	We have 
\begin{equation*}
	\sum_{n=1}^{\infty}\label{TPqqc} a(n)^2\omega_J\left(\frac{\sqrt{n}}{r}\right)^2=cr^2X+O(r^2H)+O(rX^{\frac{3}{2}}),
\end{equation*}
where $c=\Res(L\left(f\otimes f,s+s'\right),s=1)$. 
\end{proposition}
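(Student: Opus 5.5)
Write $\omega_J(\sqrt{n}/r)^2$ as a product of two inverse Mellin integrals via Proposition \ref{propomega} with $\alpha=n/r^2$. That is,
\begin{equation*}
	\omega_J\left(\frac{\sqrt{n}}{r}\right)^2=\frac{1}{(2i\pi)^2}\int_{(\sigma)}\int_{(\sigma')}F(s)\psi(1-s)F(s')\psi(1-s')\left(\frac{n}{r^2}\right)^{-s-s'}\ds\,\d s'.
\end{equation*}
Take $\sigma,\sigma'$ with $\sigma+\sigma'>1$, say $\sigma=\sigma'=\frac{1}{2}+\eps$, so that after summing over $n$ we may interchange sum and integrals. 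This is absolutely convergent by Lemma \ref{estimF}, Lemma \ref{majpsi} with $j=2$, and Deligne's bound. We obtain
\begin{equation*}
	\sum_{n}a(n)^2\omega_J\left(\frac{\sqrt{n}}{r}\right)^2=\frac{1}{(2i\pi)^2}\int_{(\sigma')}\int_{(\sigma)}F(s)F(s')\psi(1-s)\psi(1-s')r^{2(s+s')}L(f\otimes f,s+s')\ds\,\d s'.
\end{equation*}

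Next, for fixed $s'$ with $\Re s'=\sigma'$, shift the $s$-contour to the left, to $\Re(s)=\frac{1}{2}-\sigma'$, so that $\Re(s+s')=\frac{1}{2}$. Lemma \ref{moment1fonctionL} gives the continuation there with only the pole at $s+s'=1$. We cross only the simple pole at $s=1-s'$, whose residue is $cF(1-s')F(s')\psi(s')\psi(1-s')r^2$. Integrating this over $s'$ and using $F(s')F(1-s')=1$ with Lemma \ref{persevaltp} yields exactly $cr^2(X+O(H))$, which is the main term together with the $O(r^2H)$ error. The horizontal segments vanish because of the decay of $\psi$ from Lemma \ref{majpsi}.

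The remaining double integral lies on $\Re s=\Re s'=\frac{1}{4}$, so $\Re(s+s')=\frac{1}{2}$ and $r^{2(s+s')}$ has modulus $r$. It is bounded by
\begin{equation*}
	r\iint \abs{F(s)F(s')\psi(1-s)\psi(1-s')}\abs{L(f\otimes f,\tfrac12+it+it')}\dt\,\dt'.
\end{equation*}
By Lemma \ref{estimF}, $\abs{F(\tfrac14+it)}\asymp(1+\abs{t})^{-1/2}$. For $\psi$, Lemma \ref{majpsi} gives $\psi(1-s)\ll X^{3/4}(1+\abs t)^{-1}$ for $j=1$ (and trivially $\ll X^{3/4}$). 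It also gives rapid decay once $\abs{t}\gg XH^{-1}X^{\eps}$. Split $t,t'$ dyadically into ranges $T,T'$, treating small $\abs{t}\le1$ separately, and apply the first-moment bound of Lemma \ref{moment1fonctionL} at $\sigma=\frac12$: the $(T,T')$ block of the integral of $\abs{L}$ is $\ll\min(T,T')(T+T')^{5/4+\eps}$. The weights contribute $X^{3/2}(TT')^{-1/2}\cdot(TT')^{-1}$ (using the $j=1$ bound for each $\psi$), so each block is $\ll X^{3/2}\min(T,T')(T+T')^{5/4+\eps}(TT')^{-3/2}$. This is summable over dyadic ranges, since for $T\le T'$ it is $\ll X^{3/2}T^{-1/2}T'^{-1/4+\eps}$. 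Hence the shifted integral is $\ll rX^{3/2}$, which gives the last error term.

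The main obstacle is this last bound. One must check that the decay exponents of $F$ and $\psi$ genuinely beat the first-moment growth $(T+T')^{5/4}$ when $T$ and $T'$ are unbalanced. The $j=1$ bound for $\psi$ holds only for $\abs{t}\ge1$, and for the small-$t$ region the trivial bound $\psi(1-s)\ll X^{3/4}$ must be used. If the exponent count comes out marginal, I would instead choose the line $\Re(s+s')=\frac12$ asymmetrically, or use $j=1$ on one factor and $j=2$ on the other (at the cost of powers of $XH^{-1}$), keeping track that the claimed error term carries no $H$-dependence.
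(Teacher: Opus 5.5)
Your proposal is correct and follows the paper's proof. The steps match: the double Mellin representation; a contour shift past the pole at $s+s'=1$, whose residue gives $cr^{2}(X+O(H))$ via $F(s)F(1-s)=1$ and Lemma \ref{persevaltp}; and the bound on $\Re s=\Re s'=\frac{1}{4}$ using Lemma \ref{estimF}, Lemma \ref{majpsi} with $j=1$, and Lemma \ref{moment1fonctionL} at $\sigma=\frac{1}{2}$. Your dyadic count $X^{3/2}T^{-1/2}(T')^{-1/4+\eps}$ is the paper's, so the exponents are not marginal and no asymmetric line is needed.

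One bookkeeping fix: start from $\sigma'=\frac{1}{4}$, $\sigma=\frac{3}{4}+\eps$. With $\sigma=\sigma'=\frac{1}{2}+\eps$ your shift lands on $\Re s=-\eps$, $\Re s'=\frac{1}{2}+\eps$, not on $\frac14,\frac14$. Alternatively, slide afterwards along $\Re(s+s')=\frac{1}{2}$, which crosses no poles.

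Your direct shift to $\Re(s+s')=\frac{1}{2}$ is also slightly cleaner than the paper's detour through $\Re s'=-\frac{1}{4}$. That detour passes through the strip $0<\Re(s+s')<\frac{1}{2}$, where $1/\zeta(2(s+s'))$ can have poles.
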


\begin{preuve}
Using Proposition \ref{propomega}, we can write
\begin{equation*}
	\sum_{n=1}^{\infty}a(n)^2\omega_J\left(\frac{\sqrt{n}}{r}\right)^2=\sum_{n=1}^{\infty}a(n)^2\frac{1}{(2i\pi)^2}\int_{(\sigma)}\int_{(\sigma')}F(s)F(s')r^{2s+2s'}n^{-(s+s')}\psi(1-s)\psi(1-s')\ds\ds',
\end{equation*}
where $\sigma,\sigma'\in\left]-\frac{k-1}{2};+\infty\right[$. 
We choose $\sigma,\sigma'$ such that $\sigma + \sigma' > 1$; hence the sum and both integrals are absolutely convergent and we can swap sums and integrals. We obtain
\begin{equation}\label{doubleint}
	\sum_{n=1}^{\infty}a(n)^2\omega_J\left(\frac{\sqrt{n}}{r}\right)^2= \frac{1}{(2i\pi)^2}\int_{(\sigma)}\int_{(\sigma')}F(s)F(s')r^{2s+2s'}L\left(f\otimes f,s+s'\right)\psi(1-s)\psi(1-s')\ds\ds'.
\end{equation}
Let $\sigma=1+\eps$. We consider only the $s'$-integral \textit{i.e.}
\begin{equation*}
I:=\int_{(\sigma')}F(s')r^{2s'}L\left(f\otimes f,s+s'\right)\psi(1-s')\ds'. 	
\end{equation*}
We shift the $s'$-integral to $\sigma'=-\frac{1}{4}$ and we encounter only a simple pole at $s'=1-s$. The residue theorem gives us
\begin{equation*}
	I=2i\pi F(1-s)\times r^{2(1-s)}\times c\times\psi(s)+\int_{(-\frac{1}{4})}F(s')r^{2s'}L\left(f\otimes f,s+s'\right)\psi(1-s')\ds'.
\end{equation*}
We insert the previous relation in \eqref{doubleint} to get  
\begin{equation}\label{suma(n)}
	\sum_{n=1}^{\infty}a(n)^2\omega_J\left(\frac{\sqrt{n}}{r}\right)^2= cr^2\frac{1}{2i\pi}\int_{(\sigma)}F(s)F(1-s)\psi(s)\psi(1-s)\ds+E.
\end{equation}
The error term $E$ is defined by
\begin{equation*}
	E=\frac{1}{(2i\pi)^2}\int_{(\sigma)}\int_{\left(-\frac{1}{4}\right)}F(s)F(s')r^{2s+2s'}L\left(f\otimes f,s+s'\right)\psi(1-s)\psi(1-s')\ds\ds'.
\end{equation*}
We first study the first term on the right-hand side of \eqref{suma(n)}. Recall Lemma \ref{persevaltp}
\begin{equation*}
	\frac{1}{2i\pi}\int_{(\sigma)}F(s)\psi(s)F(1-s)\psi(1-s)\ds=X+O(H), 
\end{equation*}
so the first term on the right-hand side is
\begin{equation*}
	cr^2\left(X+O(H)\right)=cr^2X+O(r^2H).
\end{equation*}
We now turn to the error term $E$. We shift the $s$-integral to $\sigma=\frac{1}{4}$, encountering no poles. Next, we shift the $s'$-integral to $\sigma' = \frac{1}{4}$, again without encountering any poles. Finally, we have
\begin{equation}\label{erreurEdint}
	E\ll r \iint
	\abs{\mathcal{F}(t,t')}\abs*{L\left(f\otimes f,\frac{1}{2}+it+it'\right)}\dt\dt',
\end{equation}
where
\begin{equation*}
	\mathcal{F}(t,t')= F\left(\frac{1}{4}+it\right) F\left(\frac{1}{4}+it'\right) \psi\left(\frac{3}{4}-it\right) \psi\left(\frac{3}{4}-it'\right).
\end{equation*}
By Lemma \ref{majpsi}, both integrals are supported on
\begin{equation*}
	\abs{t},\abs{t'}\ll XH^{-1}X^{\eps}. 
\end{equation*}
We will show the double integral is $\ll X^{\frac{3}{2}}$. We split both integrals into dyadic pieces $t\sim T$ and $t'\sim T'$ where 
\begin{equation*}
	\abs{T},\abs{T'}\ll XH^{-1}X^{\eps}.
\end{equation*} 
We have for $t\sim T$ by Lemma \ref{estimF}
\begin{equation*}
	\abs*{F\left(\frac{1}{4}+it\right)}\asymp\frac{1}{(1+\abs{T})^{\frac{1}{2}}}
\end{equation*}
and by Lemma \ref{majpsi} with $k=1$
\begin{equation*}
	\abs*{\psi\left(\frac{3}{4}-it\right)}= \abs*{\psi\left(1-\left(\frac{1}{4}+it\right)\right)}\ll \frac{X^{\frac{3}{4}}}{1+\abs{T}}.
\end{equation*}
Thus, the double integral is
\begin{align*}
	&\ll X^{\frac{3}{2}}\left(1+\sumd{1\leq \abs{T},\abs{T'}\leq XH^{-1}X^{\eps}}(\abs{TT'})^{-\frac{3}{2}}\int_{t\sim T}\int_{t'\sim T'}\abs*{L\left(f\otimes f,\frac{1}{2}+it+it'\right)}\dt\dt'\right)\\
	&\ll X^{\frac{3}{2}}\left(1+ \sumd{1\leq \abs{T},\abs{T'}\leq XH^{-1}X^{\eps}}(\abs{TT}')^{-\frac{1}{2}}(\abs{T}+\abs{T}')^{\frac{1}{4}+\eps}\right)\\
	&\ll X^{\frac{3}{2}}\left(1+ \sumd{1\leq \abs{T},\abs{T'}\leq XH^{-1}X^{\eps}}\left(\frac{\abs{T}+\abs{T'}}{\abs{TT'}}\right)^{\frac{1}{2}}(\abs{TT'})^{-\frac{1}{4}+\eps}\right)\\
	&\ll X^{\frac{3}{2}},
\end{align*}
by Lemma \ref{moment1fonctionL}. We finally have
\begin{equation*}
	E\ll  rX^{\frac{3}{2}}
\end{equation*} 
and Proposition \ref{TPqqc} follows.
\end{preuve}

\begin{rmq}
Assume \eqref{hypo} for some $\delta\in[0;\frac{1}{12}]$, then if we shift the contour in \eqref{erreurEdint} to $\sigma=\sigma'=\frac{1}{6}+\delta$, and follow the same arguments as in the proof of Proposition \ref{suma(n)^2} we prove 
\begin{equation*}
	\sum_{n=1}^{\infty}\label{TPqqcbis} a(n)^2\omega_J\left(\frac{\sqrt{n}}{r}\right)^2=cr^2X+O(r^2H)+O(r^{\frac{2}{3}-4\delta}X^{\frac{5}{3}-2\delta}). 
\end{equation*}	
\end{rmq}

\begin{rmq}
Assume $s\mapsto L(f\otimes f,s)$ satisfies the Lindelöf hypothesis on average, that is 
\begin{equation*}
	\forall \sigma\in\left[0;\frac{1}{2}\right],\forall T>0,\forall \eps>0,\quad \int_0^T\abs*{L\left(f\otimes f,\sigma+it\right)}\dt\ll T^{4(\frac{1}{2}-\sigma)+1+\eps}. 
\end{equation*}
Let $\sigma\in \left[0;\frac{1}{4}\right]$. We shift both contours in \eqref{erreurEdint} to $\sigma$. Proceeding as before we get
\begin{equation*}
	E\ll r^{4\sigma}X^{2-2\sigma}\left(1+\sumd{\abs{T}\ll XH^{-1}X^{\eps}}\quad\sumd{\abs{T'}\ll XH^{-1}X^{\eps}}(\abs{TT'})^{2\sigma-1}(\abs{T}+\abs{T'})^{2-8\sigma+\eps}\right).
\end{equation*}
We see that the smallest value of $\sigma$ such that the double sum is $\ll X^{\eps}$ is 
\begin{equation*}
	\sigma=\frac{1}{6}. 
\end{equation*}
\end{rmq}

We can now use Proposition \ref{suma(n)^2} to prove Proposition \ref{etudeTP}.\smallbreak\noindent 
\begin{preuve}
Recall
\begin{equation*}
	\mathrm{MT}(X,q)=\frac{1}{q}\sum_{r\mid q}\frac{\phi(r)}{r^2}\sum_{n=1}^{\infty}a(n)^2\omega_J\left(\frac{\sqrt{n}}{r}\right)^2+O\left(X^{-2026}\right).
\end{equation*}
We use Proposition \ref{TPqqc} and we have 
\begin{equation*}
	\frac{1}{q}\sum_{r\mid q}\frac{\phi(r)}{r^2}\sum_{n=1}^{\infty}a(n)^2\omega\left(\frac{\sqrt{n}}{r}\right)^2= \frac{1}{q}\sum_{r\mid q}\frac{\phi(r)}{r^2}\left(cr^2X+O(r^2H)+O(rX^{\frac{3}{2}})\right)
\end{equation*}
and then 
\begin{equation*}
	\mathrm{MT}(X,q)=cX+O(H)+O\left(q^{-1}X^{\frac{3}{2}}g(q)\right),
\end{equation*}
which ends the proof. 
\end{preuve}

\begin{rmq}\label{TEdelta}
	If we assume \eqref{hypo} for some $\delta\in[0;\frac{1}{12}]$, we can compute the error term in a different way. We split the $r$-sum according to whether $r<X^{\frac{2}{3}}q^{-\frac{1}{3}}$ or not. If $r<X^{\frac{2}{3}}q^{-\frac{1}{3}}$ we use Lemma \ref{majomega} 1. with $j=1$ to get
\begin{equation*}
	\sum_{n=1}^{\infty}a(n)^2\omega_J\left(\frac{\sqrt{n}}{r}\right)^2\ll r^3X^{\frac{1}{2}}
\end{equation*}
and 
\begin{equation*}
	\frac{1}{q}\sum_{\substack{r\mid q\\ \hspace{0.22cm}r< X^{\frac{2}{3}}q^{-\frac{1}{3}}\phantom{-}}}\sum_{n=1}^{\infty}a(n)^2\omega_J\left(\frac{\sqrt{n}}{r}\right)^2\ll \frac{X^{\frac{1}{2}}}{q}\sum_{\substack{r\mid q\\ \hspace{0.22cm}r< X^{\frac{2}{3}}q^{-\frac{1}{3}}\phantom{-}}}r\phi(r)\ll \frac{X^{\frac{11}{6}}}{q^{\frac{5}{3}}}\tau(q)\ll \frac{X^{\frac{5}{3}}}{q^{\frac{4}{3}}}\tau(q). 
\end{equation*}
We now consider the divisors $r$ of $q$ such that $r>X^{\frac{2}{3}}q^{-\frac{1}{3}}$. One checks that for $\delta>0$
\begin{equation*}
	rX^{\frac{3}{2}}\leq r^{\frac{2}{3}+4\delta}X^{\frac{5}{3}-2\delta}\Longleftrightarrow r\leq X^{\frac{1}{2}}.
\end{equation*}
We write
\begin{align*}
	\mathrm{MT}(X,q)&=\frac{1}{q}\sum_{\substack{r\mid q\\ \hspace{0.22cm}r>X^{\frac{2}{3}}q^{-\frac{1}{3}}\phantom{-}}}\frac{\phi(r)}{r^2}\left(cr^2X+O\left(\min(rX^{\frac{3}{2}},r^{\frac{2}{3}+4\delta}X^{\frac{5}{3}-2\delta})\right)\right)+O\left(\frac{X^{\frac{5}{3}}}{q^{\frac{4}{3}}}\tau(q)\right)+O(H)\\
	&=cX-\frac{cX}{q}\sum_{\substack{r\mid q\\ \hspace{0.22cm}r\leq X^{\frac{2}{3}}q^{-\frac{1}{3}}\phantom{-}}}\phi(r)\\
	&\quad +O\left(\frac{X^{\frac{3}{2}}}{q} \sum_{\substack{r\mid q\\ \hspace{0.05cm}X^{\frac{2}{3}}q^{-\frac{1}{3}}<r<X^{\frac{1}{2}}\phantom{-}}}\frac{\phi(r)}{r}+\frac{X^{\frac{5}{3}}}{q}\left(\frac{q^2}{X}\right)^{2\delta} \sum_{\substack{r\mid q\\ \hspace{0.07cm}r\geq X^{\frac{1}{2}}\phantom{-}}}\frac{\phi(r)}{r^{\frac{4}{3}}}+\frac{X^{\frac{5}{3}}}{q^{\frac{4}{3}}}\tau(q)+H\right).
\end{align*}
We now use 
\begin{equation*}
\frac{cX}{q}\sum_{\substack{r\mid q\\ \hspace{0.22cm}r\leq X^{\frac{2}{3}}q^{-\frac{1}{3}}\phantom{-}}}\phi(r)\ll \frac{X^{\frac{5}{3}}}{q^{\frac{4}{3}}}\tau(q) 
\end{equation*}
and for $r\geq X^{\frac{1}{2}}$ with $r\mid q$
\begin{equation*}
\frac{1}{r^{\frac{4}{3}}}\ll \frac{1}{d_{\frac{1}{2}}(q)^{\frac{1}{3}}r},
\end{equation*}
and finally we have 
\begin{equation*}
	\mathrm{MT}(X,q)=cX+O(E_{\delta}+H),
\end{equation*}
where $E_{\delta}$ is defined by \eqref{MET}
\end{rmq}

\subsection{Error term}\label{termeerreur}
In this subsection, we study the error term $E(X,q)$, whose definition we recall below
\begin{equation*}
	E(X,q)=\frac{1}{q}\sum_{dr\mid q}\frac{\mu(d)}{d^2r}\sum_{\substack{1\leq n,m\leq d^2r^2XH^{-2}X^{\eps}\\ n\equiv m[r]\\
	n\neq m\phantom{-}}} a(n)a(m)\omega_J\left(\frac{\sqrt{n}}{dr}\right)\omega_J\left(\frac{\sqrt{m}}{dr}\right).
\end{equation*}
Precisely, we will prove the following.
\begin{proposition}\label{propE(X,q)}
	The following estimate holds for $E(X,q)$
\begin{equation*}
	E(X,q)\ll q^{\frac{5}{6}}X^{\frac{4}{3}}H^{-\frac{5}{4}}X^{\eps}.
\end{equation*}
\end{proposition}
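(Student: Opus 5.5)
We write the congruence $n\equiv m\,[r]$, $n\neq m$ as $n-m=h$ with $h=r\ell$, $\ell\in\Z^*$, and reduce $E(X,q)$ to a family of shifted convolution sums, each handled by Proposition \ref{problemconv} with $u=a$. Then $\delta_{u=\tau}=0$ and there is no main term. Precisely,
\begin{equation*}
	E(X,q)=\frac{1}{q}\sum_{dr\mid q}\frac{\mu(d)}{d^2r}\sum_{\substack{h\equiv 0\,[r]\\ h\neq 0}}\ \sum_{n-m=h}a(n)a(m)\,\omega_J\!\left(\frac{\sqrt{n}}{dr}\right)\omega_J\!\left(\frac{\sqrt{m}}{dr}\right).
\end{equation*}
We insert a smooth dyadic partition of unity in $n\sim N$ and $m\sim M$, with $N,M\ll d^2r^2XH^{-2}X^{\eps}$. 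The terms with $N,M$ beyond that range are negligible by Lemma \ref{majomega} 1. Since $|h|\leq N+M$, only $h\ll (N+M)/r$ multiples of $r$ occur.

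For each dyadic box we take $g(x,y)=\omega_J(\sqrt{y}/(dr))\,\omega_J(\sqrt{x}/(dr))\,V(x/M)V(y/N)$. We need the parameters $A,C,P$ of Proposition \ref{problemconv}.
\begin{itemize}
	\item For $A$, Lemma \ref{majomega} 1. with $j=0$ (the $j\geq 1$ version applied at the edge gives the same) yields $\omega_J(\sqrt{n}/(dr))\ll (dr)^{1/2}n^{-1/4}X^{1/4}$. Hence $A\ll dr\,X^{1/2}(MN)^{-1/4}$.
	\item For the derivatives, Lemma \ref{majomega} 4. gives the bound $x^{-j}C'P^j$ with $P=XH^{-1}$. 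This is exactly the hypothesis of Proposition \ref{problemconv}, with $P=X/H$ up to the harmless factors coming from $V$.
\end{itemize}
Proposition \ref{problemconv} then bounds each $h$-sum by
\begin{equation*}
	\ll A\,P^{5/4}(M+N)^{1/4}(MN)^{1/4+\eps}\ll dr\,X^{1/2}(X/H)^{5/4}(M+N)^{1/4}X^{\eps}.
\end{equation*}
The implied constant is uniform in $h$ and $g$, which is what makes the summation over $h$ legitimate.

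Next we sum over $h$, then over the dyadic boxes, then over $d$ and $r$. Write $N_0=d^2r^2XH^{-2}X^{\eps}$. The number of admissible $h$ is $\ll N_0/r$, so one box contributes
\begin{equation*}
	\frac{1}{q}\cdot\frac{1}{d^2r}\cdot\frac{N_0}{r}\cdot dr\,X^{1/2}\Big(\frac{X}{H}\Big)^{5/4}N_0^{1/4}\ll \frac{1}{q}\,\frac{N_0^{5/4}}{dr}\,X^{\frac{7}{4}}H^{-\frac{5}{4}}X^{\eps}.
\end{equation*}
Since $N_0^{5/4}=(dr)^{5/2}X^{5/4}H^{-5/2}X^{\eps}$, this is $q^{-1}(dr)^{3/2}X^{3}H^{-15/4}X^{\eps}$, and summing over $dr\mid q$ costs only $q^{\eps}$.

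This exponent pattern does not match the target $q^{5/6}X^{4/3}H^{-5/4}$. I therefore expect the real argument to balance the count of $h$ against the size of the sums differently. Two refinements are available.
\begin{itemize}
	\item Only $|h|\leq \max(M,N)$ occurs, and the size of $A$ depends on the box.
	\item When $\min(M,N)$ is small relative to $\max(M,N)$, a trivial bound (Cauchy--Schwarz with Rankin--Selberg, Theorem \ref{Deligne}) beats Proposition \ref{problemconv}.
\end{itemize}
The plan is to interpolate between the trivial bound and Proposition \ref{problemconv} according to the box sizes and the size of $dr$ relative to $X^{1/2}$. The fractional exponent $q^{5/6}$ suggests optimising a minimum of this kind.

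The main obstacle is this bookkeeping. It requires correctly tracking the amplitude $A$ of the $\omega_J$ factors across the ranges $n\lessgtr (dr)^2X/H^2$, using Lemma \ref{majomega} 1. with varying $j$. It also requires choosing, box by box, between the trivial and the nontrivial estimate so that the total is $\ll q^{5/6}X^{4/3}H^{-5/4}X^{\eps}$. A secondary technical point is verifying that Lemma \ref{majomega} 4. really gives the mixed-derivative hypothesis of Proposition \ref{problemconv} with $C\ll A$, which follows because $g$ is a product of one-variable functions.
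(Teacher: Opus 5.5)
Your proposal does not prove the statement. The one estimate you carry through gives $q^{-1}(dr)^{3/2}X^{3}H^{-15/4}X^{\eps}$, which you note has the wrong shape. The step that would produce $q^{5/6}X^{4/3}H^{-5/4}$ is left as unspecified ``bookkeeping''.

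The input to that estimate is also wrong. Lemma~\ref{majomega}~1 is stated only for $j\geq 1$; its formal $j=0$ instance would read $(dr)^{1/2}n^{-1/4}X^{-1/4}H$, not $(dr)^{1/2}n^{-1/4}X^{1/4}$. In the whole effective range $n\leq (dr)^2XH^{-2}X^{\eps}$ one has $\sqrt{n}/(dr)\leq X^{1/2+\eps}/H<X^{\eps}$. So your bound is smaller than the $j=1$ bound $(dr)^{3/2}n^{-3/4}X^{1/4}$ by the factor $\sqrt{n}/(dr)$. The $j=1$ bound is of the true order there for typical $n$. It comes from the transition of $w$ near $x=X$, which is too short to produce any cancellation when $\sqrt{n}/(dr)\ll X^{1/2}/H$. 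Hence the amplitude to feed into Proposition~\ref{problemconv} is $A=(dr)^3X^{1/2}(MN)^{-3/4}$, not $dr\,X^{1/2}(MN)^{-1/4}$. With the correct $A$, the bound from Proposition~\ref{problemconv} decays in $\min(M,N)$, and that decay is what makes the argument work.

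The missing idea is to split the dyadic boxes according to $\min(M,N)$; the comparison of $dr$ with $X^{1/2}$ plays no role in this estimate. Take $M\leq N\leq h^2XH^{-2}X^{\eps}$ with $h=dr$.

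\begin{itemize}
\item Deligne's bound $|a(n)|\ll n^{\eps}$ with the correct $A$ gives the trivial estimate $\mathcal{E}(M,N,h,r)\ll A\,MN/r\ll h^3X^{1/2}r^{-1}(MN)^{1/4}X^{\eps}$, which grows with $M$.
\item Proposition~\ref{problemconv}, applied to each of the $\ll N/r$ shifts $lr$, gives $h^3r^{-1}X^{7/4}H^{-5/4}M^{-1/2}N^{3/4}X^{\eps}$, which decays with $M$.
\end{itemize}

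Use the first for $M\leq\Omega$ and the second for $M\geq\Omega$, and sum dyadically:
\[
\sum_{M,N}\mathcal{E}(M,N,h,r)\ll \bigl(h^{7/2}X^{3/4}H^{-1/2}\Omega^{1/4}+h^{9/2}X^{5/2}H^{-11/4}\Omega^{-1/2}\bigr)r^{-1}X^{\eps}.
\]
Now insert the weight $q^{-1}(d^2r)^{-1}$ and use $\sum_{dr\mid q}(dr)^{\beta}\ll q^{\beta+\eps}$. This gives $E(X,q)\ll \bigl(q^{1/2}X^{3/4}H^{-1/2}\Omega^{1/4}+q^{3/2}X^{5/2}H^{-11/4}\Omega^{-1/2}\bigr)X^{\eps}$. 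The choice $\Omega=q^{4/3}X^{7/3}H^{-3}$ balances the two terms at $q^{5/6}X^{4/3}H^{-5/4}X^{\eps}$.

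This is exactly the paper's argument, via Propositions~\ref{bornetriviale} and~\ref{bornebis}. Your outline names the right ingredients, but without the correct amplitude and this explicit optimisation it is not yet a proof.
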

\noindent The sum $E(X,q)$ is a \textit{shifted convolution sum}. Letting $h=dr$ and using a dyadic partition in both variables $n$ and $m$ we have 
\begin{align*}
	E(X,q)&=\frac{1}{q}\sum_{dr\mid q}\frac{\mu(d)}{d^2r}\sumd{M,N}\sum_{\substack{1\leq n,m\leq h^2XH^{-2}X^{\eps}\\ n\equiv m[r]\\ n\neq m\phantom{-}}}a(n)a(m)\omega_J\left(\frac{\sqrt{m}}{h}\right) \rho\left(\frac{m}{M}\right)\omega_J\left(\frac{\sqrt{n}}{h}\right) \rho\left(\frac{n}{N}\right)\\
	&=\frac{1}{q}\sum_{dr\mid q}\frac{\mu(d)}{d^2r}\sumd{M,N}\mathcal{E}(M,N,h,r),
\end{align*}
where $\rho$ is a smooth function with compact support in $[1;2]$ such that
\begin{equation*}
	\forall x\in\R,\quad \sumd{M}\rho\left(\frac{x}{M}\right)=\sum_{k\in\Z}\rho\left(\frac{x}{2^k}\right)=1. 
\end{equation*}
Both variables $M,N$ satisfy : $1\leq M,N\leq h^2XH^{-2}X^{\eps}$. We can assume without loss of generality that $M\leq N$.\smallbreak
\noindent
The trivial bound for $\mathcal{E}(M,N,h,r)$ is the following.
\begin{proposition}\label{bornetriviale}
We have 
\begin{equation*}
	\mathcal{E}(M,N,h,r)\ll h^3X^{\frac{1}{2}}r^{-1}M^{\frac{1}{4}}N^{\frac{1}{4}}X^{\eps}.
\end{equation*}
\end{proposition}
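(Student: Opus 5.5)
The plan is to bound $\mathcal{E}(M,N,h,r)$ termwise in absolute value, with no use of oscillation. I would combine three ingredients: the pointwise bound for $\omega_J$ from Lemma \ref{majomega}, Deligne's bound (Theorem \ref{Deligne}) for $a(n)a(m)$, and a count of the pairs $(n,m)$ in the congruence. Recall that in $\mathcal{E}(M,N,h,r)$ the variables satisfy $m\sim M$, $n\sim N$ (from the supports of $\rho(m/M)$ and $\rho(n/N)$), $n\equiv m\,[r]$ and $n\neq m$. Also $0\leq\rho\leq 1$, and we assume $M\leq N$.

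\textbf{Step 1: pointwise bound for the weights.} Apply Lemma \ref{majomega} 1. with $j=1$:
\begin{equation*}
	\omega_J\left(\frac{\sqrt{n}}{h}\right)\ll h^{\frac{1}{2}}n^{-\frac{1}{4}}X^{-\frac{1}{4}}H\cdot\frac{X^{\frac{1}{2}}h}{\sqrt{n}H}=h^{\frac{3}{2}}X^{\frac{1}{4}}n^{-\frac{3}{4}}.
\end{equation*}
For $m\sim M$ and $n\sim N$ this gives
\begin{equation*}
	\omega_J\left(\frac{\sqrt{m}}{h}\right)\omega_J\left(\frac{\sqrt{n}}{h}\right)\ll h^3X^{\frac{1}{2}}(MN)^{-\frac{3}{4}}.
\end{equation*}
Deligne's bound gives $a(n)a(m)\ll X^{\eps}$, because $n,m\ll h^2XH^{-2}X^{\eps}\ll X^{O(1)}$.

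\textbf{Step 2: counting pairs.} I need to show that the number of admissible pairs is $\ll MN/r$, which requires a small case split.
\begin{itemize}
	\item If $2N<r$, then $n,m\leq 2N<r$, so $n\equiv m\,[r]$ forces $n=m$. This is excluded, so there are no terms and $\mathcal{E}=0$.
	\item If $2N\geq r$, then for each $m\sim M$ the number of $n\sim N$ with $n\equiv m\,[r]$ is $\ll N/r+1\ll N/r$.
\end{itemize}
In either case the number of pairs is $\ll MN/r$.

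\textbf{Step 3: combine.} Multiplying the three bounds gives
\begin{equation*}
	\mathcal{E}(M,N,h,r)\ll X^{\eps}\,h^3X^{\frac{1}{2}}(MN)^{-\frac{3}{4}}\,\frac{MN}{r}=h^3X^{\frac{1}{2}}r^{-1}M^{\frac{1}{4}}N^{\frac{1}{4}}X^{\eps},
\end{equation*}
which is the claimed bound.

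The only point needing care is Step 2, namely the degenerate range $N<r$, where the naive count $M(N/r+1)$ would be too large. It is handled by observing that there are no terms at all unless $N\gg r$. If one wished to avoid the $X^{\eps}$ coming from Deligne's bound, Cauchy--Schwarz together with the Rankin--Selberg estimate over residue classes would also work, but the $X^{\eps}$ is harmless here.
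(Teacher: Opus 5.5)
Your proof is correct and takes essentially the same route as the paper: each term is bounded in absolute value using Lemma \ref{majomega}~1.\ with $j=1$ and Deligne's bound, and the congruence $n\equiv m\,[r]$ is used to count the terms. The only difference is cosmetic. The paper writes $n=m+lr$, bounds the second weight by $(lr)^{-3/4}$ and then sums over $|l|\ll N/r$; you instead use the localization $n\sim N$ directly and count pairs, and your explicit treatment of the case $2N<r$ makes the count $\ll MN/r$ slightly cleaner.
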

\begin{preuve}
We can write, using Lemma \ref{majomega}~1. with $j=1$
\begin{align*}
	\mathcal{E}(M,N,h,r)&=\sum_{\substack{\abs{l}\ll r^{-1}N\\ m\sim M\phantom{-}}}a(m)a(m+lr) \omega_J\left(\frac{\sqrt{m}}{h}\right) \rho\left(\frac{m}{M}\right)\omega_J\left(\frac{\sqrt{m+lr}}{h}\right) \rho\left(\frac{m+lr}{N}\right)\\
	&\ll \sum_{\substack{\abs{l}\ll r^{-1}N\\ m\sim M\phantom{-}}}\abs{a(m)a(m+lr)}X^{\frac{1}{2}}h^3m^{-\frac{3}{4}}(lr)^{-\frac{3}{4}}\\
	&\ll h^3X^{\frac{1}{2}}r^{-1}M^{\frac{1}{4}}N^{\frac{1}{4}}X^{\eps}.
\end{align*}
\end{preuve}
\begin{rmq}
	Note that using $M\leq N\leq h^2 XH^{-2}X^{\eps}$ we get
\begin{equation*}
	\mathcal{E}(M,N,h,r)\ll h^4r^{-1}XH^{-1}X^{\eps}
\end{equation*}
and 
\begin{equation*}
	E(X,q)\ll \frac{1}{q}\sum_{dr\mid q}\frac{d^4r^3}{d^2r}XH^{-1}X^{\eps}\ll qXH^{-1}X^{\eps},
\end{equation*}
which is the bound obtained in \cite{LauZhao}. 
\end{rmq}
We will obtain another bound for $\mathcal{E}(M,N,h,r)$ using Proposition \ref{problemconv}.
We can write
\begin{equation*}
	\mathcal{E}(M,N,h,r)=\sum_{l\leq 2r^{-1}N}\sum_{n-m=lr}a(m)a(n)g(m,n).
\end{equation*}
Where 
\begin{equation*}
	g(x,y)=\omega_J\left(\frac{\sqrt{x}}{h}\right)\rho\left(\frac{x}{M}\right)\omega_J\left(\frac{\sqrt{y}}{h}\right)\rho\left(\frac{y}{N}\right). 
\end{equation*}
Using Lemma \ref{majomega} we show that
\begin{equation*}
	g(x,y)\ll h^3X^{\frac{1}{2}}M^{-\frac{3}{4}}N^{-\frac{3}{4}}=:A
\end{equation*}
and for $(i,j)\in\N^2$, $i+j\geq 1$, one has 
\begin{equation*}
	\frac{\partial^{i+j} g}{\partial x^{i}\partial y^j}(x,y)\ll CP^{i+j},
\end{equation*}
with $P=XH^{-1}$.\smallbreak
\noindent
Using Proposition \ref{problemconv} we prove the following. 
\begin{proposition}\label{bornebis}
	We have
\begin{equation*}
	\mathcal{E}(M,N,h,r)\ll h^3r^{-1}X^{\frac{7}{4}}H^{-\frac{5}{4}}M^{-\frac{1}{2}}N^{\frac{3}{4}}(MN)^{\eps}. 
\end{equation*}
\end{proposition}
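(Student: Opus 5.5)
We apply Proposition \ref{problemconv} with $u=(a(n))$ to each inner sum
\begin{equation*}
	\sum_{n-m=lr}a(m)a(n)g(m,n),\qquad 1\leq l\leq 2r^{-1}N,
\end{equation*}
with shift $h=lr$. Since $u=a$ is cuspidal, there is no main term ($\delta_{u=\tau}=0$). Each such sum is therefore
\begin{equation*}
	\ll AP^{\frac{5}{4}}(M+N)^{\frac{1}{4}}(MN)^{\frac{1}{4}+\eps},
\end{equation*}
uniformly in $l$, where $A=h^3X^{\frac{1}{2}}M^{-\frac{3}{4}}N^{-\frac{3}{4}}$ and $P=XH^{-1}$. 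The size bound $A$ and the derivative bounds $CP^{i+j}$ for $g$ were recorded just before the statement. They follow from Lemma \ref{majomega} (part 1 with $j=1$ for the size, part 4 for the derivatives), combined with the trivial bounds $\rho^{(i)}(x/M)\ll M^{-i}\leq$ const$\cdot x^{-i}$ on the support. The derivative hypothesis of Proposition \ref{problemconv} should be read in the form $\partial_x^i\partial_y^jg\ll CP^{i+j}x^{-i}y^{-j}$, which is exactly what Lemma \ref{majomega}~4 provides. The constant $C$ does not appear in the final bound of Proposition \ref{problemconv}, so its size is irrelevant.

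Summing over $l$ is done trivially: there are $\ll r^{-1}N$ values of $l$. Using $M\leq N$, so that $(M+N)^{\frac14}\ll N^{\frac14}$, we get
\begin{equation*}
	\mathcal{E}(M,N,h,r)\ll r^{-1}N\cdot h^3X^{\frac{1}{2}}M^{-\frac{3}{4}}N^{-\frac{3}{4}}\left(\frac{X}{H}\right)^{\frac{5}{4}}N^{\frac{1}{4}}(MN)^{\frac{1}{4}+\eps}.
\end{equation*}
Collecting exponents: for $M$ we get $-\frac34+\frac14=-\frac12$, for $N$ we get $1-\frac34+\frac14+\frac14=\frac34$, and for $X$ we get $\frac12+\frac54=\frac74$. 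This gives $h^3r^{-1}X^{\frac74}H^{-\frac54}M^{-\frac12}N^{\frac34}(MN)^{\eps}$, as claimed. The values $l\leq 0$ are symmetric, since we only assumed $M\leq N$ up to swapping roles, and are handled identically; $l=0$ is excluded because $n\neq m$.

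The main obstacle is verifying that Proposition \ref{problemconv} really applies with the stated parameters. Three points need checking. First, the $\omega_J$ factors are not compactly supported in a strict sense, so the weight $\rho(x/M)\rho(y/N)$ must supply the box support $[M,2M]\times[N,2N]$. Second, the constant in Proposition \ref{problemconv} must be independent of the shift $lr$. Third, the derivative bound must hold with the single oscillation parameter $P=XH^{-1}$ in both variables simultaneously. This last point is where care is needed: one must check that Lemma \ref{majomega}~4 holds uniformly for $x\sim M$ (respectively $y\sim N$) and that the product rule does not introduce worse factors than $P^{i+j}x^{-i}y^{-j}$. Since $P\geq 1$, the $\rho$-derivatives are dominated, and this is the step I would write out most carefully.
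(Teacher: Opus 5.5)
Your proposal is correct and is exactly the argument the paper intends; the paper gives no separate proof beyond ``using Proposition \ref{problemconv}''. You apply that proposition to each shift $lr$ with $A=h^3X^{\frac12}(MN)^{-\frac34}$ and $P=XH^{-1}$, uniformly in the shift, and sum trivially over the $\ll r^{-1}N$ admissible values of $l$ using $M\leq N$; the same bookkeeping, $\max(M,N)r^{-1}AP^{\frac54}(M+N)^{\frac14}(MN)^{\frac14+\eps}$, appears explicitly in the proof of Proposition \ref{EY}, and your exponent count reproduces the stated bound.
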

We can now prove Proposition \ref{propE(X,q)}.\smallbreak
\begin{preuve}
Let $\Omega > 0$ be a parameter to be optimized later.
If $M\leq\Omega$, we use the trivial bound (Proposition \ref{bornetriviale})
\begin{equation}\label{borne1}
	\sumd{M\leq\Omega}\sumd{N}\mathcal{E}(M,N,h,r)\ll \sumd{M\leq\Omega}\sumd{N}\\ h^3r^{-1}X^{\frac{1}{2}}M^{\frac{1}{4}}N^{\frac{1}{4}}X^{\eps}\ll h^{\frac{7}{2}}r^{-1}X^{\frac{3}{4}}H^{-\frac{1}{2}}\Omega^{\frac{1}{4}}X^{\eps}.
\end{equation}
If $M\geq\Omega$, we use Proposition \ref{bornebis}
\begin{equation}\label{borne2}
	\sumd{M\geq \Omega}\sumd{N}\mathcal{E}(M,N,h,r)\ll \sumd{M\geq\Omega}\sumd{N}h^3r^{-1}X^{\frac{7}{4}}H^{-\frac{5}{4}}M^{-\frac{1}{2}}N^{\frac{3}{4}}(MN)^{\eps} \ll h^{\frac{9}{2}}r^{-1}X^{\frac{5}{2}}H^{-\frac{11}{4}}\Omega^{-\frac{1}{2}}X^{\eps}.
\end{equation}
Recall
\begin{equation*}
	E(X,q)=\frac{1}{q}\sum_{dr\mid q}\frac{\mu(d)}{d^2r}\sumd{M,N}\mathcal{E}(M,N,dr,r).
\end{equation*}
Inserting \eqref{borne1} and \eqref{borne2} we get
\begin{align*}
	E(X,q)&\ll q^{-1}\sum_{dr\mid q}\left((dr)^{\frac{3}{2}}X^{\frac{3}{4}}H^{-\frac{1}{2}}\Omega^{\frac{1}{4}}X^{\eps}+(dr)^{\frac{5}{2}}X^{\frac{5}{2}}H^{-\frac{11}{4}}\Omega^{-\frac{1}{2}}X^{\eps}\right)\\
	&\ll q^{\frac{1}{2}}X^{\frac{3}{4}}H^{-\frac{1}{2}}\Omega^{\frac{1}{4}}X^{\eps}+ q^{\frac{3}{2}}X^{\frac{5}{2}}H^{-\frac{11}{4}}\Omega^{-\frac{1}{2}}X^{\eps}. 
\end{align*}
We now optimize both error terms by taking $\Omega = q^{\frac{4}{3}}X^{\frac{7}{3}}H^{-3}$, which yields
\begin{equation*}
	E(X,q)\ll q^{\frac{5}{6}}X^{\frac{4}{3}}H^{-\frac{5}{4}}X^{\eps}.
\end{equation*}
This is exaclty the desired error term. 
\end{preuve}

\subsection{Final computations}
We are now ready to prove Theorem \ref{moduleqqc}. One has by Proposition \ref{Awsplit}
\begin{align*}
	A(X,q)&=A_w(q;a)+O\left(A_w(q;a)^{\frac{1}{2}}Hq^{-\frac{1}{2}}+H^2q^{-1}\right)\\
	&=\text{MT}(X,q)+E(X,q) + O\left(A_w(q;a)^{\frac{1}{2}}Hq^{-\frac{1}{2}}+H^2q^{-1}\right)\\
	&=cX+O\left(q^{-1}X^{\frac{3}{2}}g(q)+H\right)+O\left(q^{\frac{5}{6}}X^{\frac{4}{3}}H^{-\frac{5}{4}}X^{\eps}\right)+O(X^{\frac{1}{2}}Hq^{-\frac{1}{2}} + H^2q^{-1}).
\end{align*}	
We optimize both error terms by choosing
\begin{equation*}
	H=\frac{1}{3}\max(q^{\frac{16}{27}}X^{\frac{10}{27}},q)\geq X^{\frac{3}{5}}.
\end{equation*}
and finally we obtain
\begin{equation*}
	A(X,q)=cX+O\left(q^{-1}X^{\frac{3}{2}}g(q)\right)+ O\left(q^{\frac{5}{54}}X^{\frac{47}{54}}X^{\eps} +q^{\frac{5}{27}}X^{\frac{20}{27}}+q^{\frac{1}{2}}X^{\frac{1}{2}}\right).
\end{equation*}
Now we see that
\begin{equation*}
	q^{\frac{5}{27}}X^{\frac{20}{27}}\leq q^{\frac{5}{54}}X^{\frac{47}{54}},
\end{equation*}
which proves Theorem \ref{moduleqqc}.
\begin{rmq}
Assume \eqref{hypo} for some $\delta\in[0;\frac{1}{12}]$. If we follow the same reasoning as in Remark \ref{TEdelta} we can replace $q^{-1}X^{\frac{3}{2}}g(q)$ by $E_{\delta}$, where $E_{\delta}$ is defined by \eqref{MET}. 	
\end{rmq}

\begin{rmq}
We do not manage to get a better error term in the range $q\ll X^{\frac{1}{2}}$ because the result in Proposition \ref{problemconv} involves the growth of the derivatives of $\omega$ \textit{via} the parameter $P=XH^{-1}$. As $q$ gets smaller, this quantity gets bigger and the 'trivial' bound gives in this case a better estimate.
\end{rmq}

\section{Variance of the divisor function in arithmetic progressions}\label{divisor}
The goal of this section is to prove Theorem \ref{thdivisor}.
\smallbreak
\noindent
The proof is very similar to the one of Theorem \ref{moduleqqc} but the details are more involved.
Recall the definition of $A(X,q;\tau)$ in \eqref{divisorAP} and \eqref{TPbxq}. By \cite{LauZhao}*{Equation~(4.12)} we can write
\begin{equation}\label{splitA(X,q,tau)}
	A(X,q;\tau)=A_w(q;\tau)+O\left(A_w(q;\tau)^{\frac{1}{2}}E_1^{\frac{1}{2}}+E_1\right),
\end{equation}
where
\begin{equation*}
	A_w(q;\tau)=\sum_{b(q)}\abs*{\sum_{n\equiv b[q]}\tau(n)w(n)-T_w(b,q)}^2,
\end{equation*}
\begin{equation*}
	T_w(b,q)=T_b(X,q)+O\left(q^{-1}\tau(q)^{\frac{1}{2}}\tau(b)^{\frac{1}{2}}H\log(X)\right)
\end{equation*}
and
\begin{equation*}
	E_1\ll \tau(q)\log(X)^3H^{2}q^{-1}. 
\end{equation*}
By \cite{LauZhao}*{Equation~(4.14)} we have
\begin{equation*}
	A_w(q;\tau)=\frac{1}{q}\sum_{r\mid q}\frac{1}{r^2}\sums{h(r)}\abs*{\sum_{n=1}^{\infty}a(n)e\left(-\frac{\overline{h}n}{r}\right)\omega_Y\left(\frac{\sqrt{n}}{r}\right)+\sum_{n=1}^{\infty}a(n)e\left(\frac{\overline{h}n}{r}\right)\omega_K\left(\frac{\sqrt{n}}{r}\right)}^2.
\end{equation*}
We expand the square to get
\begin{equation}\label{Aw=(1)+(2)+(3)}
	A_{w}(q;\tau)=(1)+(2)+E'(X,q,Y,K)
\end{equation}
where
\begin{equation*}
	(1)= \frac{1}{q}\sum_{r\mid q}\frac{1}{r^2}\sums{h(r)}\abs*{\sum_{n=1}^{\infty}\tau(n)e\left(-\frac{\overline{h}n}{r}\right)\omega_Y\left(\frac{\sqrt{n}}{r}\right)}^2,
\end{equation*}
\begin{equation*}
	(2)= \frac{1}{q}\sum_{r\mid q}\frac{1}{r^2}\sums{h(r)}\abs*{\sum_{n=1}^{\infty}\tau(n)e\left(\frac{\overline{h}n}{r}\right)\omega_K\left(\frac{\sqrt{n}}{r}\right)}^2,
\end{equation*}
and
\begin{equation*}
	E'(X,q,Y,K)=\frac{2}{q}\sum_{r\mid q}\frac{1}{r^2}\sums{h(r)}\Re\left(\sum_{n,m=1}^{\infty}\tau(n)\tau(m)e\left(\frac{\overline{h}(n+m)}{r}\right)\omega_Y\left(\frac{\sqrt{n}}{r}\right)\omega_K\left(\frac{\sqrt{m}}{r}\right)\right).
\end{equation*}
We apply the same argument of Proposition \ref{Awsplit} to the terms $(1)$ and $(2)$ to get for $B\in\{Y,K\}$
\begin{equation*}
	(1),(2)=\mathrm{MT}(X,q,B)+E(X,q,B)+O(X^{-2026}),
\end{equation*}  
where
\begin{equation*}
	\mathrm{MT}(X,q,B)=\frac{1}{q}\sum_{r\mid q}\frac{\phi(r)}{r^2}\sum_{n=1}^{\infty}\tau(n)^2\omega_B\left(\frac{\sqrt{n}}{r}\right)^2
\end{equation*}
and 
\begin{equation*}
	E(X,q,B)=\frac{1}{q}\sum_{dr\mid q}\frac{\mu(d)}{d^2r}\sum_{\substack{1\leq n,m\leq d^2r^2XH^{-2}X^{\eps}\\ n\equiv m[r]\\
	n\neq m\phantom{-}}} \tau(n)\tau(m)\omega_B\left(\frac{\sqrt{n}}{dr}\right)\omega_B\left(\frac{\sqrt{m}}{dr}\right).
\end{equation*}

\subsection{Main Term}
As in section \ref{Mainterm}, we will study the main term using inverse Mellin transform. We will prove the following.
\begin{proposition}\label{mainttermtau}
	Let $B\in\{Y,K\}$. Let $\delta\in[0;\frac{1}{12}]$ and assume \eqref{hypo}. We have
\begin{equation*}
	\mathrm{MT}(X,q,B)=\frac{X}{q}\sum_{r\mid q}\phi(r)\widetilde{Q_B}\left(\frac{r^2}{X}\right)+O\left(X^{\frac{3}{4}}H^{\frac{1}{4}}\log(X)^3+q^{-1}X^{\frac{3}{2}}g(q)\right)
\end{equation*}
where $\widetilde{Q_B}$ is a polynomial in $\log(\xi)$ with positive leading coefficient. 
\end{proposition}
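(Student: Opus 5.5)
Here $\mathrm{MT}(X,q,B)$ is treated exactly as $\mathrm{MT}(X,q)$ in Proposition \ref{suma(n)^2}, with $L(f\otimes f,s)$ replaced by the Ramanujan series
\begin{equation*}
	\sum_{n\geq 1}\tau(n)^2n^{-s}=\frac{\zeta(s)^4}{\zeta(2s)}.
\end{equation*}
The main term comes from the quadruple pole at $s+s'=1$, and the error from shifted contours controlled by moments of $\zeta$.

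First I write, via Proposition \ref{propomega} and absolute convergence for $\sigma+\sigma'>1$,
\begin{equation*}
	\sum_{n}\tau(n)^2\omega_B\left(\frac{\sqrt n}{r}\right)^2=\frac{1}{(2i\pi)^2}\int_{(\sigma)}\int_{(\sigma')}K_B(s)K_B(s')r^{2s+2s'}\frac{\zeta(s+s')^4}{\zeta(2s+2s')}\psi(1-s)\psi(1-s')\ds\ds',
\end{equation*}
where $K_Y=G$ and $K_K=H$. With $\sigma=1+\eps$, I shift $s'$ to $\Re(s')=-\frac14$. The only pole crossed is the pole of order $4$ at $s'=1-s$; there are no others, since $\Gamma(s')^2$ has poles only at $s'\leq 0$, which lie to the left of $-\frac14$, and $\zeta(2s+2s')$ does not vanish on the relevant region. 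The residue is a combination of derivatives in $s'$, at $s'=1-s$, of $K_B(s')r^{2s'}\psi(1-s')$. This gives $r^{2}$ times integrals of the form
\begin{equation*}
	\frac{1}{2i\pi}\int_{(\sigma)}K_B(s)\,\partial^j\!\left(K_B\psi\right)(1-s)\,\psi(1-s)\,(\log r)^{i}\ds,\qquad i+j\leq 3.
\end{equation*}
Rather than evaluating these by Parseval directly, I observe that they reassemble into $r^2\int_0^\infty Q(\xi r^2)\omega_B(\sqrt\xi)^2\,\d\xi$, with $Q$ a cubic polynomial in $\log$ of positive leading coefficient (from $\zeta(s)^4/\zeta(2s)$, the leading coefficient is $\frac{1}{6\zeta(2)}$). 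This is simply the Mellin--Parseval identity (Proposition \ref{Parseval}) run backwards: the main term equals the integral one would get by replacing $\tau(n)^2$ by its smooth average. Lemma \ref{tptau} then gives
\begin{equation*}
	r^2X\widetilde{Q_B}\left(\frac{r^2}{X}\right)+O\left(r^2X^{\frac34}H^{\frac14}\log(Xr)^3\right).
\end{equation*}
After multiplying by $\phi(r)/(qr^2)$ and summing over $r\mid q$, this produces the stated main term and the error $X^{\frac34}H^{\frac14}\log(X)^3$, using $\sum_{r\mid q}\phi(r)=q$.

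For the remaining double integral, I shift both contours to $\Re(s)=\Re(s')=\frac14$, so that $\zeta(s+s')^4$ sits at $\Re=\frac12$, crossing no poles. Each factor contributes $r^{1/2}$. On $t\sim T$, Stirling (Lemma \ref{Stirling}) gives $|K_B(\tfrac14+it)|\asymp|T|^{-1/2}$ up to the exponential factors, which cancel for $G$ thanks to the $\cos(\pi s)$, while for $H$ they make things even better. Lemma \ref{majpsi} gives $\psi(\frac34-it)\ll X^{3/4}/(1+|T|)$, and $1/\zeta(1+2it+2it')\ll (|T|+|T'|)^{\eps}$. The remaining input is
\begin{equation*}
	\int_{t\sim T}\int_{t'\sim T'}\left|\zeta\left(\tfrac12+it+it'\right)\right|^4\dt\dt'\ll\min(T,T')(T+T')^{1+\eps},
\end{equation*}
which follows from Lemma \ref{moment4zeta}. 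This is stronger than the bound used in the cusp-form case, so the same dyadic summation as in Proposition \ref{suma(n)^2} gives $\ll rX^{\frac32}$, hence $q^{-1}X^{\frac32}g(q)$ after summing over $r$. The hypothesis \eqref{hypo} is not needed for this choice; it only enters if one shifts further left, as in Remark \ref{TEdelta}.

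The main obstacle is the residue step: checking that the order-$4$ residue, with its derivatives of $K_B\psi$ and powers of $\log r$, indeed coincides with the smoothed integral of Lemma \ref{tptau}. Equivalently, one must show directly via Parseval that each derivative term equals $\int (\log\xi)^j\omega_B(\sqrt\xi)^2\,\d\xi$ up to the stated error. I would verify this by differentiating the identity of Proposition \ref{Parseval} with respect to an auxiliary exponent, $\xi^{a}$, at $a=0$.
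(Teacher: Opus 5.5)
Your overall route is the paper's: double Mellin integral against $\zeta(s)^4/\zeta(2s)$, the order-$4$ residue at $s+s'=1$ converted by Proposition~\ref{Parseval} into $r^2\int_0^\infty Q(\xi r^2)\omega_B(\sqrt\xi)^2\,\mathrm{d}\xi$ and then Lemma~\ref{tptau}, and the remainder on $\Re s=\Re s'=\frac14$ handled with Lemma~\ref{moment4zeta}. Your final bounds and the summation over $r\mid q$ are fine. The gap is the choice of contours, which you transplanted from the cuspidal case ($\Re s=1+\eps$, then $\Re s'=-\frac14$).

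That choice works for $F$, which is holomorphic on $\Re s>-\frac{k-1}{2}$. But $G(s)=2(2\pi)^{-2s}\cos(\pi s)\Gamma(s)^2$ and $H(s)=(2\pi)^{-2s}\Gamma(s)^2$ have a double pole at $s=0$ (since $\cos 0=1$), and $\psi(1)=\int w\asymp X\neq 0$. This pole is the Mellin image of $\omega_Y(\sqrt\xi)\approx -2X\log\xi$ as $\xi\to 0$. So your claim that the poles of $\Gamma(s')^2$ lie to the left of $-\frac14$ is false at $s'=0$, and three steps fail.

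(i) The shift to $\Re s'=-\frac14$ crosses $s'=0$. The residue there, integrated on $\Re s=1+\eps$, is a combination of $\sum_n\tau(n)^2\omega_B(\sqrt n/r)$ and $\sum_n\tau(n)^2\log(n)\,\omega_B(\sqrt n/r)$, with coefficients of size $\psi(1)\asymp X$ and $\psi'(1)\asymp X\log X$. These sums themselves have polar parts of size $r^2$ times powers of $\log X$. So this term is not negligible, and it is missing from your bookkeeping.

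(ii) On $\Re s=1+\eps$ your residue integrals are not Parseval integrals. The hypothesis $\int_0^\infty x^{2-2c}|h(x)|^2\,\mathrm{d}x/x<\infty$ of Proposition~\ref{Parseval}, with $h=\omega_B(\sqrt{\cdot})$, forces $0<c<1$. Moreover $G(1-s)\psi(s)$ has a double pole at $s=1$, so moving the line into $(0,1)$ produces another residue of main-term size. Hence ``Parseval run backwards'' does not give $r^2\int Q(\xi r^2)\omega_B(\sqrt\xi)^2\,\mathrm{d}\xi$ on that line.

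(iii) The leftover integral on $(\Re s,\Re s')=(1+\eps,-\frac14)$ cannot be brought to $(\frac14,\frac14)$ without one of three problems: recrossing $s'=0$, recrossing $s+s'=1$, or passing through $\Re(s+s')<\frac12$, where $1/\zeta(2(s+s'))$ may have poles.

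The repair is the paper's choice of lines. Start on $\Re s=\frac13$, $\Re s'=1$; the series converges absolutely and both lines are to the right of $0$. Then move only $s'$, to $\Re s'=\frac14$.
\begin{itemize}
\item The only pole crossed is the order-$4$ pole at $s'=1-s$.
\item The residue integrals sit on $\Re s=\frac13$. There Parseval applies verbatim to $G(s)\psi(1-s)$ and to its derivatives evaluated at $1-s$, which are the Mellin transforms of $\log(\xi)^m\omega_B(\sqrt\xi)$, with $\Re(1-s)=\frac23\in(0,1)$.
\item The remaining double integral moves from $(\frac13,\frac14)$ to $(\frac14,\frac14)$ with $\frac12\leq\Re(s+s')<1$ and $\Re s,\Re s'>0$ throughout, so no poles are met.
\end{itemize}
With that change the rest of your argument goes through, including your remark that \eqref{hypo} is not used.
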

\noindent As in section \ref{proofTh}, we first consider 
\begin{equation*}
	\sum_{n=1}^{\infty}\tau(n)^2\omega_B\left(\frac{\sqrt{n}}{r}\right)^2,
\end{equation*}
for $B\in\{Y,K\}$. 
\begin{proposition}\label{TPY,K}
	Let $B\in\{Y,K\}$. We have
\begin{equation*}
	\sum_{n=1}^{\infty}\tau(n)^2\omega_B\left(\frac{\sqrt{n}}{r}\right)^2=r^2 X\widetilde{Q_B}\left(\frac{r^2}{X}\right)+O(r^2X^{\frac{3}{4}}H^{\frac{1}{4}}\log(Xr)^3+rX^{\frac{3}{2}}), 
\end{equation*}
where $\widetilde{Q_B}(\xi)$ is a polynomial in $\log(\xi)$ of degree $3$ with positive leading coefficient.
\end{proposition}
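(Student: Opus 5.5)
The plan is to mimic the proof of Proposition \ref{suma(n)^2}. We replace $L(f\otimes f,s)$ by the Ramanujan Dirichlet series
\begin{equation*}
	\sum_{n\geq 1}\tau(n)^2n^{-s}=\frac{\zeta(s)^4}{\zeta(2s)},
\end{equation*}
and replace $F$ by $G$ or $H$ from Proposition \ref{propomega}. Writing $\Gamma_B$ for $G$ (if $B=Y$) or $H$ (if $B=K$), Proposition \ref{propomega} with $\sigma,\sigma'$ large, followed by an exchange of sums and integrals, gives
\begin{equation*}
	\sum_{n=1}^{\infty}\tau(n)^2\omega_B\left(\frac{\sqrt{n}}{r}\right)^2=\frac{1}{(2i\pi)^2}\int_{(\sigma)}\int_{(\sigma')}\Gamma_B(s)\Gamma_B(s')r^{2s+2s'}\frac{\zeta(s+s')^4}{\zeta(2s+2s')}\psi(1-s)\psi(1-s')\ds\ds'.
\end{equation*}
Fix $\sigma=1+\eps$ and shift the $s'$-integral to $\Re(s')=-\frac14$. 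Since $\zeta(2s+2s')$ has no zeros for $\Re(s+s')\geq \frac34>\frac12$, the only pole crossed is the pole of order $4$ at $s'=1-s$.

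\textbf{Residue term.} The residue at $s'=1-s$ is a combination of derivatives up to order $3$ in $s'$ of $\Gamma_B(s')r^{2s'}\psi(1-s')(s+s'-1)^4\zeta(s+s')^4/\zeta(2s+2s')$, evaluated at $s'=1-s$. Derivatives of $r^{2s'}$ produce powers of $\log r$, derivatives of $\psi(1-s')$ produce the Mellin transform of $w(x)\log(x)^j$, and derivatives of $\Gamma_B$ produce the digamma factors of Lemma \ref{Stirling}. Rather than evaluating the resulting $s$-integral directly, I would identify it with an integral on the $x$-side. Reinserting the Mellin representation shows that the polar part equals
\begin{equation*}
	r^2\int_0^{\infty}Q(\xi r^2)\,\omega_B(\sqrt{\xi})^2\,\d\xi
\end{equation*}
for an explicit polynomial $Q$ in $\log$ of degree $3$. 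Its leading coefficient is $\frac{1}{6}\cdot\frac{1}{\zeta(2)}>0$, coming from the polar part $\frac{1}{6\zeta(2)}\log(x)^3$ of the mean value of $\tau(n)^2$. Lemma \ref{tptau} then converts this into $r^2X\widetilde{Q_B}(r^2/X)+O(r^2X^{\frac34}H^{\frac14}\log(Xr)^3)$. The identification can be made rigorous through a Parseval-type argument (Proposition \ref{Parseval}), in the same way Lemma \ref{persevaltp} was used; the only difference is that $G(s)G(1-s)$ is not identically $1$, which is why the identity has to be taken on the $x$-side.

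\textbf{Remaining double integral.} Shift the $s$-integral to $\Re(s)=\frac14$ and then $s'$ to $\Re(s')=\frac14$, so that $\Re(s+s')=\frac12$; no further poles are met. Let $Z=\frac12+it+it'$. By Stirling, $\abs{\Gamma_B(\frac14+it)}\asymp(1+\abs t)^{-\frac12}$, since $\abs{\cos(\pi s)\Gamma(s)^2}\asymp\abs t^{2\sigma-1}$; for $B=K$ the factor is even exponentially small. Lemma \ref{majpsi} with $j=1$ gives $\psi(\frac34-it)\ll X^{\frac34}/(1+\abs t)$, and the integrals are supported on $\abs t,\abs{t'}\ll XH^{-1}X^{\eps}$. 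The error is therefore
\begin{equation*}
	\ll rX^{\frac32}\sumd{T,T'}(TT')^{-\frac32}\int_{t\sim T}\int_{t'\sim T'}\abs*{\frac{\zeta(Z)^4}{\zeta(2Z)}}\dt\dt'.
\end{equation*}
As in Lemma \ref{moment1fonctionL}, the inner integral is at most $\min(T,T')$ times a one-variable integral over a range of length $\asymp T+T'$. Lemma \ref{moment4zeta}, together with $1/\zeta(1+2ix)\ll\log x$, bounds that one-variable integral by $(T+T')^{1+\eps}$. The total is
\begin{equation*}
	\ll rX^{\frac32}\sumd{T,T'}\left(\frac{T+T'}{TT'}\right)^{\frac12}(TT')^{-\frac12+\eps},
\end{equation*}
which converges, giving $O(rX^{\frac32})$.

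\textbf{Main obstacle.} I expect the delicate step to be the residue computation: showing that the order-$4$ pole contribution is exactly the quantity treated in Lemma \ref{tptau}, up to admissible errors, and checking that the leading coefficient is positive. The contour-shift bound should be routine given Lemma \ref{moment4zeta}.
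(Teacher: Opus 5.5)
Your architecture matches the paper's: a double Mellin--Barnes integral against $\zeta(s+s')^4/\zeta(2s+2s')$, the residue at $s'=1-s$ identified via Parseval with $r^2\int_0^\infty Q(\xi r^2)\omega_B(\sqrt{\xi})^2\,\d\xi$, then Lemma \ref{tptau}, then Lemma \ref{moment4zeta} on $\Re(s)=\Re(s')=\frac14$. Your bound for the remaining integral is fine. The gap is in the contours you copied from Proposition \ref{suma(n)^2}, which are not legitimate for $B\in\{Y,K\}$.

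There, $F$ is holomorphic on $\Re(s)>-\frac{k-1}{2}$. Here, $G(s)=2(2\pi)^{-2s}\cos(\pi s)\Gamma(s)^2$ and $H(s)=(2\pi)^{-2s}\Gamma(s)^2$ both have a double pole at $s=0$; this is why Proposition \ref{propomega} requires $\sigma>0$ for $\omega_Y,\omega_K$. The consequences are as follows.

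First, moving $s'$ to $\Re(s')=-\frac14$ also crosses $s'=0$, so your claim that only the pole at $s'=1-s$ is met is false. The missed residue contains terms such as $\psi(1)\,\Gamma_B(s)r^{2s}\psi(1-s)$ times the $s$-derivative of $\zeta(s)^4/\zeta(2s)$, with $\psi(1)=\int w\asymp X$. After the $s$-integration this is of main-term size, not negligible.

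Second, on your line $\Re(s)=1+\eps$, the factor $\widetilde G(1-s)$ (with $\widetilde G(s)=\Gamma_B(s)\psi(1-s)$) is evaluated at $\Re(1-s)=-\eps$. That lies outside the half-plane $\Re>0$ where $\widetilde G$ is the Mellin transform of $\xi\mapsto\omega_B(\sqrt{\xi})$. The identity $\frac{1}{2i\pi}\int_{(c)}\widetilde G(s)\widetilde G^{(m)}(1-s)\,\d s=\int_0^\infty\log(\xi)^m\omega_B(\sqrt{\xi})^2\,\d\xi$ is only available for $0<c<1$. Moving from $c=1+\eps$ into that strip crosses the pole at $s=1$ of $\Gamma_B(1-s)$ and its derivatives, which again contributes a term of main-term size.

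Third, shifting $s$ to $\frac14$ while $\Re(s')=-\frac14$ pushes $\Re(s+s')$ down to $0$, through the strip where $1/\zeta(2(s+s'))$ has poles. The final move of $s'$ from $-\frac14$ to $\frac14$ then recrosses $s'=0$.

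These omissions happen to compensate one another, and the identity you finally write down is the correct one. But each intermediate identity you assert is false, so the argument does not hold as written.

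The repair is the paper's choice of lines, which keeps every contour in $\Re>0$ and keeps $\Re(s+s')\geq\frac12$. Start from $\Re(s)=\frac13$, $\Re(s')=1$, and move only $s'$ to a line with $\frac16\leq\Re(s')<\frac23$. This crosses exactly one pole, at $s'=1-s$ (real part $\frac23$). The resulting residue term lives on $\Re(s)=\frac13\in\,]0;1[$, where Parseval applies directly. It yields $r^2\int_0^\infty Q(\xi r^2)\omega_B(\sqrt{\xi})^2\,\d\xi$ with leading coefficient $\frac{1}{6\zeta(2)}>0$, as you computed. The remaining double integral can then be moved to $\Re(s)=\Re(s')=\frac14$ without meeting any pole, and your estimate $O(rX^{\frac32})$ goes through unchanged.
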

\begin{preuve}
	We give the details for $B=Y$. The proof is similar for $B=K$. Using Proposition \ref{propomega} we can write
\begin{equation*}
	\sum_{n=1}^{\infty}\tau(n)^2\omega_Y\left(\frac{\sqrt{n}}{r}\right)^2=\frac{1}{(2i\pi)^2}\int_{(\sigma)}\int_{(\sigma')}G(s)r^{2s}G(s')r^{2s'}\psi(1-s)\psi(1-s')\frac{\zeta^4(s+s')}{\zeta(2(s+s'))}\ds\ds',
\end{equation*}
with $\sigma=\frac{1}{3}$ and $\sigma'=1$. Here we used the formula (see \cite{Titch}*{Equation~(1.2.10)})
\begin{equation*}
	\forall s\in\C,\Re(s)>1\quad \sum_{n=1}^{\infty}\frac{\tau(n)^2}{n^s}=\frac{\zeta^4(s)}{\zeta(2s)}.
\end{equation*}
As in Proposition \ref{Mainterm}, we apply the residue theorem to get
\begin{equation}
	\sum_{n=1}^{\infty}\tau(n)^2\omega_Y\left(\frac{\sqrt{n}}{r}\right)^2=I+E
\end{equation}
with 
\begin{equation} \label{TPY}
	I= \frac{1}{2i\pi}\int_{(\sigma)}G(s)r^{2s}\psi(1-s)\Res\left(G(s')r^{2s'}\psi(1-s') \frac{\zeta^4(s+s')}{\zeta(2(s+s'))},s'=1-s\right)\ds,
\end{equation}
\begin{equation}\label{errdinttau}
	E=\frac{1}{(2i\pi)^2}\int_{(\sigma)}\int_{(\sigma')}G(s)r^{2s}G(s')r^{2s'}\psi(1-s)\psi(1-s') \frac{\zeta^4(s+s')}{\zeta(2(s+s'))}\ds\ds', 
\end{equation}
and $0<\sigma'<1-\sigma$. We first compute the integral. $I$ 
We write
\begin{equation*}
	\frac{\zeta(s)^4}{\zeta(2s)}=\frac{a}{(s-1)^4}+\frac{b}{(s-1)^3}+\frac{c}{(s-1)^2}+\frac{d}{s-1}+O(1),
\end{equation*}
for some $a,b,c,d\in\R$. 
We now compute the residue in \eqref{TPY} to get
\begin{align*}
	&\Res\left(\frac{\zeta^4(s+s')}{\zeta(2(s+s'))}G(s')r^{2s'},s'=1-s\right)\\
	&=\frac{a}{6} \left.\left(G(s')\psi(1-s')r^{2s'}\right)^{(3)}\right\rvert_{s'=1-s}+\frac{b}{2}\left.\left(G(s')\psi(1-s')r^{2s'}\right)^{(2)}\right\rvert_{s'=1-s}\\
	&\quad +c\left.\left(G(s')\psi(1-s')r^{2s'}\right)^{(1)}\right\rvert_{s'=1-s}+dG(1-s)\psi(s)r^{2-2s}.
\end{align*}   
Let
\begin{equation*}
	\widetilde{G}(s)=G(s)\psi(1-s).
\end{equation*}
If we compute all the derivatives, we need to handle integrals of the form
\begin{equation*}
	 r^2\log(r^2)^l\frac{1}{2i\pi}\int_{(\sigma)} \widetilde{G}^{(m)}(1-s)\widetilde{G}(s)\ds,
\end{equation*}
for some $(l,m)\in[\![0;3]\!]^2$. To compute these integrals, we will use Parseval's formula (Proposition \ref{persevaltp}) and start with the case $m=0$. Recall \eqref{propomegaK}
\begin{equation*}
	\omega_Y(\sqrt{\xi})=\frac{1}{2i\pi}\int_{(\sigma)}G(s)\psi(1-s)\xi^{-s}\ds=\frac{1}{2i\pi}\int_{(\sigma)}\widetilde{G}(s)\xi^{-s}\ds.
\end{equation*}
Hence $s\mapsto\widetilde{G}(s)$ is the Mellin transform of $\xi\mapsto\omega_Y(\sqrt{\xi})$. By Mellin inversion we have for $\Re(s)>0$
\begin{equation*}
	\widetilde{G}(s)=\int_0^{\infty}\omega_Y(\sqrt{\xi})\xi^{s-1}\d\xi. 
\end{equation*}
Hence, by Parseval's formula (Proposition \ref{Parseval})
\begin{equation*}
	\frac{1}{2i\pi}\int_{(\sigma)}\widetilde{G}(1-s)\widetilde{G}(s)\ds=\int_0^{\infty}\omega_Y(\sqrt{\xi})^2\d\xi. 
\end{equation*}
Now if $m\geq 1$, we just need to see that
\begin{equation*}
	\widetilde{G}^{(m)}(s)=\int_0^{\infty}\log(\xi)^{m}\omega_Y(\sqrt{\xi})\xi^{s-1}\d\xi, 
\end{equation*}
so $s\mapsto \widetilde{G}^{(m)}(s)$ is the Mellin transform of $\xi\mapsto\log(\xi)^m\omega_Y(\sqrt{\xi})$
so Parseval's formula yields
\begin{equation*}
	\frac{1}{2i\pi}\int_{(\sigma)}\widetilde{G}^{(m)}(1-s)\widetilde{G}(s)\ds=\int_0^{\infty}\log(\xi)^{m}\omega_Y(\sqrt{\xi})^2\d\xi.
\end{equation*}
Finally we have
\begin{equation*}
	\sum_{n=1}^{\infty}\tau(n)^2\omega_Y\left(\frac{\sqrt{n}}{r}\right)^2=r^2\int_0^{\infty}Q_Y(\xi r^2)\omega_Y(\sqrt{\xi})^2\d\xi+E,
\end{equation*}
where $Q_Y(x)$ is a polynomial in $\log(x)$ of degree $3$ with positive leading coefficient. We now use Lemma \ref{tptau} to write
\begin{equation*}
	\int_0^{\infty}Q_Y(\xi r^2)\omega_Y(\sqrt{\xi})^2\d\xi= X\widetilde{Q_Y}\left(\frac{r^2}{X}\right)+O(X^{\frac{3}{4}}H^{\frac{1}{4}}\log(Xr)^3),
\end{equation*}
where $\widetilde{Q_Y}(\xi)$ is a polynomial in $\log(\xi)$ of degree $3$ whose leading coefficient has the same sign as $Q$. 
\smallbreak
\noindent
We now turn to the error term $E$, recall
\begin{equation*}
	E=\frac{1}{(2i\pi)^2}\int_{(\sigma)}\int_{(\sigma')}G(s)r^{2s}G(s')r^{2s'}\psi(1-s)\psi(1-s')\frac{\zeta^4(s+s')}{\zeta(2(s+s'))}\ds\ds'.
\end{equation*}
We shift both contours to $\sigma=\sigma'=\frac{1}{4}$, Stirling's formula (Lemma \ref{Stirling}) gives
\begin{equation*}
	\abs{G(\sigma+it)\psi(1-\sigma-it)}\asymp \frac{X^{\frac{3}{4}}}{(1+\abs{t})^{\frac{3}{2}}}.
\end{equation*}
Using Lemma \ref{moment4zeta} we easily show 
\begin{equation*}
	\int_{0}^{T}\int_0^{T'}\abs*{\zeta^4\left(\frac{1}{2}+it+it'\right)}\dt\dt'\ll (TT')^{1+\eps},
\end{equation*}
and finally, by carrying out the same analysis as in Proposition \ref{suma(n)^2} we have
\begin{equation*}
	E\ll rX^{\frac{3}{2}},
\end{equation*}
which ends the proof. 
\end{preuve}
\begin{rmq}\label{divisorRH}
	Assume \eqref{hypo} for some $\delta\in[0;\frac{1}{12}]$. If we follow the same reasoning as in Remark \ref{TEdelta} we can replace $q^{-1}X^{\frac{3}{2}}g(q)$ by $E_{\delta}$, where $E_{\delta}$ is defined by \eqref{MET}. 
\end{rmq}
We can now prove Proposition \ref{mainttermtau}.\smallbreak
\noindent
\begin{preuve}
	Recall
\begin{equation*}
	\mathrm{MT}(X,q,B)=\frac{1}{q}\sum_{r\mid q}\frac{\phi(r)}{r^2}\sum_{n=1}^{\infty}\tau(n)^2\omega_B\left(\frac{\sqrt{n}}{r}\right)^2.
\end{equation*}
Inserting Proposition \ref{TPY,K} we get
\begin{align*}
	\mathrm{MT}(X,q,B)&=\frac{1}{q}\sum_{r\mid q}\frac{\phi(r)}{r^2}\left(r^2 X\widetilde{Q_B}\left(\frac{r^2}{X}\right)+O(r^2X^{\frac{3}{4}}H^{\frac{1}{4}}\log(Xr)^3+rX^{\frac{3}{2}})\right)\\
	&=\frac{X}{q}\sum_{r\mid q}\phi(r)\widetilde{Q_B}\left(\frac{r^2}{X}\right)+O\left(X^{\frac{3}{4}}H^{\frac{1}{4}}\log(X)^3+q^{-1}X^{\frac{3}{2}}g(q)\right),
\end{align*}
where $\widetilde{Q_B}(x)$ is a polynomial in $\log(x)$ of degree $3$ with positive leading coefficient.
\end{preuve}

\subsection{Error terms}\label{termeerreurtau}
We now turn to the error term $E(X,q,Y)$, $E(X,q,K)$ and $E'(X,q,Y,K)$. We start with the first two. Recall
\begin{equation*}
	E(X,q,Y)=\frac{1}{q}\sum_{dr\mid q}\frac{\mu(d)}{d^2r}\sum_{\substack{1\leq n,m\leq d^2r^2XH^{-2}X^{\eps}\\ n\equiv m[r]\\
	n\neq m\phantom{-}}} \tau(n)\tau(m)\omega_Y\left(\frac{\sqrt{n}}{dr}\right)\omega_Y\left(\frac{\sqrt{m}}{dr}\right).
\end{equation*}
Let us define for $(B,B')\in\{Y,K\}^2$, and $x,y,h>0$ 
\begin{equation*}
	g_{B,B'}(x,y;h)=\omega_B\left(\frac{\sqrt{x}}{h}\right)\omega_{B'}\left(\frac{\sqrt{y}}{h}\right).
\end{equation*}
We will prove the following.
\begin{proposition}\label{EY}
	We have
\begin{equation*}
	E(X,q,Y)=\frac{2}{q}\sum_{dr\mid q}\frac{\mu(d)}{d^2r}\sum_{l=1}^{\infty}\int_{0}^{\infty}g_{Y,Y}(x,x+lr;dr)\Lambda_{lr}(x,x+lr)\dx+O(q^{\frac{5}{6}}X^{\frac{4}{3}}H^{-\frac{5}{4}}X^{\eps}). 
\end{equation*}
\end{proposition}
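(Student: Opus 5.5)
We mirror the proof of Proposition \ref{propE(X,q)}, now keeping track of the main term that Proposition \ref{problemconv} produces when $u=\tau$. Writing $h=dr$ and inserting the dyadic partition $\rho$ in both variables, we have
\begin{equation*}
	E(X,q,Y)=\frac{1}{q}\sum_{dr\mid q}\frac{\mu(d)}{d^2r}\sumd{M,N}\mathcal{E}_Y(M,N,h,r),
\end{equation*}
where $\mathcal{E}_Y$ is the analogue of $\mathcal{E}$ with $a(n)$ replaced by $\tau(n)$ and $\omega_J$ by $\omega_Y$. The condition $n\equiv m[r]$ with $n\neq m$ is written as $n-m=\pm lr$ with $l\geq 1$. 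By the symmetry of $g_{Y,Y}$ in its two arguments, the two signs contribute equally, which is the source of the factor $2$.

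For each dyadic block and each $l$, the smooth weight is $g(x,y)=g_{Y,Y}(x,y;h)\rho(x/M)\rho(y/N)$. Lemma \ref{majomega} 1. and 4. give
\begin{equation*}
	g\ll A=h^3X^{\frac{1}{2}}(MN)^{-\frac{3}{4}},
\end{equation*}
with derivative bounds of size $CP^{i+j}$ and $P=XH^{-1}$, exactly as in the cuspidal case. These bounds hold for $B=Y$ as well as $B=J$, since Lemma \ref{majomega} covers all $B$. Proposition \ref{problemconv} with $u=\tau$ then produces the main term
\begin{equation*}
	\int_0^\infty g(x,x+lr)\Lambda_{lr}(x,x+lr)\dx,
\end{equation*}
with the same error $O(AP^{5/4}(M+N)^{1/4}(MN)^{1/4+\eps})$ as in the proof of Proposition \ref{bornebis}. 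Summing these main terms over the dyadic pieces with $\sum\rho=1$ recombines them into $\int_0^\infty g_{Y,Y}(x,x+lr;dr)\Lambda_{lr}(x,x+lr)\dx$. Here the range $n,m\leq h^2XH^{-2}X^{\eps}$ is removed at the cost $O(X^{-2026})$, using the rapid decay of $\omega_Y$ from Lemma \ref{majomega} 1.

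For the error terms we repeat the $\Omega$-splitting of Proposition \ref{propE(X,q)}. For small $M\leq\Omega$, the main terms cannot be extracted via Proposition \ref{problemconv} efficiently, so we bound both the sum and the integral trivially. The sum is handled as in Proposition \ref{bornetriviale}, with $\tau(n)\ll n^\eps$ replacing Deligne's bound. The integral is controlled by
\begin{equation*}
	\Lambda_h(x,y)\ll \sigma_{-1}(h)\log(x)\log(y)\ll X^\eps,
\end{equation*}
which follows from $\abs{c_k(h)}\leq (k,h)$. This gives the same trivial bound as before, up to $X^\eps$. For $M\geq\Omega$, we use Proposition \ref{problemconv} directly. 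The optimization in $\Omega$ and the divisor sum over $dr\mid q$ are then identical and give $O(q^{5/6}X^{4/3}H^{-5/4}X^\eps)$.

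The main obstacle is the step where we reinsert the trivially bounded main terms for $M\leq\Omega$ to obtain the full integral over $x$. Concretely, we must check that
\begin{equation*}
	\sum_l\int g_{Y,Y}\rho\rho\Lambda
\end{equation*}
over those blocks is bounded by the same quantity as the sum. This requires the integral analogue of the computation in Proposition \ref{bornetriviale}, namely integrating $x^{-3/4}(lr)^{-3/4}$ against the weight. We also need to justify that the $l$-sum of the integrals converges absolutely so that it can be interchanged with the dyadic decomposition; this uses the decay of $\omega_Y$ for large arguments.
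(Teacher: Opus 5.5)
Your proposal is correct and takes essentially the same route as the paper. Both arguments use a dyadic decomposition and bound the blocks with $\min(M,N)\leq\Omega$ trivially, both the sum and the main-term integral, using $\Lambda_{lr}\ll X^{\eps}$. Both then apply Proposition \ref{problemconv} to the remaining blocks, recombine the partition of unity, choose $\Omega=q^{\frac{4}{3}}X^{\frac{7}{3}}H^{-3}$, and use the symmetry of $g_{Y,Y}$ and $\Lambda_{lr}$ to get the factor $2$. The only cosmetic difference is that you symmetrize in $l$ at the outset, whereas the paper sums over $l\neq 0$ and folds the negative shifts onto the positive ones at the end.
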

\begin{preuve}
We proceed exactly in the same way as in section \ref{termeerreur} and write
\begin{equation*}
	E(X,q,Y)=\frac{1}{q}\sum_{dr\mid q}\frac{\mu(d)}{d^2r}\sumd{M,N}\mathcal{E}(M,N,dr,r;\tau)
\end{equation*}
with
\begin{equation*}
	\mathcal{E}(M,N,h,r;\tau)= \sum_{\substack{1\leq n,m\leq h^2XH^{-2}X^{\eps}\\ n\equiv m[r]\\
	n\neq m\phantom{-}}} \tau(n)\tau(m)\omega_Y\left(\frac{\sqrt{n}}{h}\right)\rho\left(\frac{n}{N}\right)\omega_Y\left(\frac{\sqrt{m}}{h}\right) \rho\left(\frac{m}{M}\right).
\end{equation*}
As in section \ref{termeerreur}, we bound trivially the error term when $M\leq \Omega$
\begin{equation*}
	\frac{1}{q}\sum_{dr\mid q}\frac{\mu(d)}{d^2r}\sumd{M\leq \Omega}\sumd{N}\mathcal{E}(M,H,h,r;\tau)\ll q^{\frac{1}{2}}X^{\frac{3}{4}}H^{-\frac{1}{2}}\Omega^{\frac{1}{4}}X^{\eps}. 
\end{equation*}
We write $n\equiv m[r]$ as $n-m=lr$ to get
\begin{equation*}
	\mathcal{E}(M,N,h,r;\tau)=\sum_{l\neq 0}\sum_{n-m=lr} \tau(n)\tau(m)\omega_Y\left(\frac{\sqrt{n}}{h}\right)\rho\left(\frac{n}{N}\right)\omega_Y\left(\frac{\sqrt{m}}{h}\right) \rho\left(\frac{m}{M}\right).
\end{equation*}
We now use Proposition \ref{problemconv} to get 
\begin{align*}
	\mathcal{E}(M,N,h,r;\tau)&=\sum_{l\neq 0}\int_0^{\infty}g_{Y,Y}(x,x-lr;h)\rho\left(\frac{x}{N}\right)\rho\left(\frac{x-lr}{M}\right)\Lambda_{lr}(x,x-lr)\dx\\
	&+O(\max(N,M)r^{-1}AP^{\frac{5}{4}}(M+N)^{\frac{1}{4}}(MN)^{\frac{1}{4}+\eps}),
\end{align*}
where $A=h^3X^{\frac{1}{2}}(MN)^{-\frac{3}{4}}$, $P=XH^{-1}$,
and
\begin{equation*}
	\Lambda_{lr}(x,y)=\sum_{k=1}^{\infty}\frac{1}{k^2}c_k(lr)(\log(x)-2\gamma-2\log(k))(\log(y)-2\gamma-2\log(k)). 
\end{equation*}
We have
\begin{equation*}
	\frac{1}{q}\sum_{dr\mid q}\frac{\mu(d)}{d^2r}\sumd{M\geq \Omega}\sumd{N\geq \Omega}O(\max(N,M)r^{-1}AP^{\frac{5}{4}}(M+N)^{\frac{1}{4}}(MN)^{\frac{1}{4}+\eps})\ll q^{\frac{3}{2}}X^{\frac{5}{2}}H^{-\frac{11}{4}}\Omega^{-\frac{1}{2}}X^{\eps}
\end{equation*}
and
\begin{align*}
	&\frac{1}{q}\sum_{dr\mid q}\frac{\mu(d)}{d^2r}\sumd{M\leq \Omega}\sumd{N}\sum_{l\neq 0}\int_0^{\infty} g_{Y,Y}(x,x-lr;dr)\rho\left(\frac{x}{N}\right)\rho\left(\frac{x-lr}{M}\right)\Lambda_{lr}(x,x-lr)\dx\\
	&\ll  \frac{1}{q}\sum_{dr\mid q}\frac{\mu(d)}{d^2r}\sumd{M\leq \Omega}\sumd{N}\max(N,M)r^{-1}(dr)^3X^{\frac{1}{2}}(NM)^{-\frac{3}{4}}\min(N,M)\\
	&\ll q^{\frac{1}{2}}X^{\frac{3}{4}}H^{-\frac{1}{2}}\Omega^{\frac{1}{4}}X^{\eps}.
\end{align*}
Here, we used
\begin{equation}\label{bornelambda}
	\Lambda_{lr}(x,y)\ll \log(X)^2\sum_{k=1}^{\infty}\frac{c_{k}(lr)}{k^2}\log(k)\ll \log(X)^2\sum_{k=1}^{\infty}\frac{(lr,k)\log(k)}{k^2}\ll \log(X)^2\tau(lr)\ll X^{\eps}
\end{equation}
and 
\begin{equation*}
	\forall x\geq 1,\quad \sum_{k\leq x}(lr,k)=\sum_{d\mid lr}d\sum_{\substack{k\leq x\\ \hspace{0.22cm} (lr,k)=d}}\hspace{-0.3cm}1\leq \sum_{d\mid lr}d\sum_{k\leq \frac{x}{d}}1\leq x\tau(lr),
\end{equation*}
where the implied constant does not depend on $lr$.\smallbreak
\noindent 
Finally we have,
\begin{align*}
	E(X,q,Y)&=\frac{1}{q}\sum_{dr\mid q}\frac{\mu(d)}{d^2r}\sumd{M,N}\sum_{l\neq 0} \int_{\max(0,lr)}^{\infty}g_{Y,Y}(x,x-lr;dr)\rho\left(\frac{x}{N}\right)\rho\left(\frac{x-lr}{M}\right)\Lambda_{lr}(x,x-lr)\dx\\
	&\quad\quad +O(q^{\frac{1}{2}}X^{\frac{3}{4}}H^{-\frac{1}{2}}\Omega^{\frac{1}{4}}X^{\eps}+ q^{\frac{3}{2}}X^{\frac{5}{2}}H^{-\frac{11}{4}}\Omega^{-\frac{1}{2}}X^{\eps}).
\end{align*}
Recall the dyadic partitions
\begin{equation*}
	\sumd{M}\rho\left(\frac{x-lr}{M}\right)=\sumd{N}\rho\left(\frac{x}{N}\right)=1,
\end{equation*}
hence
\begin{align*}
	E(X,q,Y)&=\frac{1}{q}\sum_{dr\mid q}\frac{\mu(d)}{d^2r}\sum_{l\neq 0}\int_{\max(0,lr)}^{\infty}g_{Y,Y}(x,x-lr;dr)\Lambda_{lr}(x,x-lr)\dx\\
	&+O(q^{\frac{1}{2}}X^{\frac{3}{4}}H^{-\frac{1}{2}}\Omega^{\frac{1}{4}}X^{\eps}+ q^{\frac{3}{2}}X^{\frac{5}{2}}H^{-\frac{11}{4}}\Omega^{-\frac{1}{2}}X^{\eps}). 
\end{align*}
As in section \ref{termeerreur}, we optimize both error terms by taking
\begin{equation*}
	\Omega=q^{\frac{4}{3}}X^{\frac{7}{3}}H^{-3},
\end{equation*}
and finally get
\begin{equation*}
	E(X,q,Y)=\frac{1}{q}\sum_{dr\mid q}\frac{\mu(d)}{d^2r}\sum_{l\neq 0}\int_{\max(0,lr)}^{\infty}g_{Y,Y}(x,x-lr;dr)\Lambda_{lr}(x,x-lr)\dx+O(q^{\frac{5}{6}}X^{\frac{4}{3}}H^{-\frac{5}{4}}X^{\eps}). 
\end{equation*}
If $l\geq 1$ we have
\begin{equation*}
	\int_{\max(0,lr)}^{\infty}g_{Y,Y}(x,x-lr;dr)\Lambda_{lr}(x,x-lr)\dx=\int_0^{\infty}g_{Y,Y}(x+lr,x;dr)\Lambda_{lr}(x+lr,x)\dx,
\end{equation*}
as $g_{Y,Y}(x,y;h)=g_{Y,Y}(y,x;h)$ and $\Lambda_{lr}(x,y)=\Lambda_{lr}(y,x)$.\smallbreak
\noindent 
If $l\leq -1$ we have
\begin{equation*}
	\int_{\max(0,lr)}^{\infty}g_{Y,Y}(x,x-lr;dr)\Lambda_{lr}(x,x-lr)\dx=\int_0^{\infty}g_{Y,Y}(x,x-lr;dr)\Lambda_{lr}(x,x-lr)\dx,
\end{equation*}
hence 
\begin{align*}
	\sum_{l\neq 0}\int_{\max(0,lr)}^{\infty}\int_0^{\infty}g_{Y,Y}(x,x-lr;dr)\Lambda_{lr}(x,x-lr)\dx &=\sum_{l=1}^{\infty}g_{Y,Y}(x,x+lr;dr)\Lambda_{lr}(x,x+lr)\dx\\
	&+ \sum_{l=-\infty}^{-1}\int_0^{\infty}g_{Y,Y}(x,x-lr;dr)\Lambda_{lr}(x,x-lr)\dx\\
	&=2\sum_{l=1}^{\infty}\int_0^{\infty}g_{Y,Y}(x,x+lr;dr)\Lambda_{lr}(x,x+lr)\dx,
\end{align*}
which ends the proof. 
\end{preuve}

We do the same with $E(X,q,K)$ and end up with the following statement.
\begin{proposition}\label{EK}
	We have
\begin{equation*}
	E(X,q,K)=\frac{2}{q}\sum_{dr\mid q}\frac{\mu(d)}{d^2r}\sum_{l=1}^{\infty}\int_{0}^{\infty}g_{K,K}(x,x+lr;dr)\Lambda_{lr}(x,x+lr)\dx+O(q^{\frac{5}{6}}X^{\frac{4}{3}}H^{-\frac{5}{4}}X^{\eps}).
\end{equation*}
\end{proposition}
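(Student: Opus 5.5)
The plan is to repeat the proof of Proposition \ref{EY} verbatim with $\omega_Y$ replaced by $\omega_K$. Write
\begin{equation*}
	E(X,q,K)=\frac{1}{q}\sum_{dr\mid q}\frac{\mu(d)}{d^2r}\sumd{M,N}\mathcal{E}_K(M,N,dr,r;\tau),
\end{equation*}
where $\mathcal{E}_K$ is defined exactly as $\mathcal{E}(M,N,h,r;\tau)$ but with $\omega_K$ in place of $\omega_Y$. The only analytic inputs are the following.
\begin{itemize}
	\item The pointwise bound of Lemma \ref{majomega}~1 with $j=1$, which holds for $B=K$. It gives $g_{K,K}(x,y;h)\ll h^3X^{\frac{1}{2}}(xy)^{-\frac{3}{4}}$ on dyadic boxes, so $A=h^3X^{\frac{1}{2}}(MN)^{-\frac{3}{4}}$ is the same as before.
	\item The derivative bounds of Lemma \ref{majomega}~3--4, which also hold for $B=K$ (the proof used only Proposition \ref{propomega} and polynomial growth of $H(s)=(2\pi)^{-2s}\Gamma(s)^2$ on vertical lines, in fact decay). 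These give the hypotheses of Proposition \ref{problemconv} with $P=XH^{-1}$.
\end{itemize}
The support truncation $n,m\le h^2XH^{-2}X^{\eps}$ is likewise justified by Lemma \ref{majomega}~1 (or even more strongly by Lemma \ref{majomega}~2, since $\omega_K$ decays exponentially).

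Next, fix a parameter $\Omega$ and split into two ranges. For $M\le\Omega$, bound $\mathcal{E}_K$ trivially as in Proposition \ref{bornetriviale}, using Deligne's bound $\tau(n)\ll n^{\eps}$ in place of Theorem \ref{Deligne}. This contributes $q^{\frac12}X^{\frac34}H^{-\frac12}\Omega^{\frac14}X^{\eps}$. In the same range, bound the would-be main term $\int g_{K,K}\rho\rho\Lambda_{lr}$ by the same quantity, using \eqref{bornelambda} for $\Lambda_{lr}$. For $M\ge\Omega$, write $n-m=lr$ and apply Proposition \ref{problemconv} with $u=\tau$. Since $\delta_{u=\tau}=1$, this produces the main term
\begin{equation*}
	\sum_{l\neq0}\int g_{K,K}(x,x-lr;dr)\rho\left(\frac{x}{N}\right)\rho\left(\frac{x-lr}{M}\right)\Lambda_{lr}\dx
\end{equation*}
plus an error which, summed over $l\ll N/r$, dyadic $M,N$ and $dr\mid q$, is $q^{\frac32}X^{\frac52}H^{-\frac{11}4}\Omega^{-\frac12}X^{\eps}$.

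After reinserting the small-$M$ integrals, the dyadic partitions of unity collapse and we are left with the full integral. Choosing $\Omega=q^{\frac43}X^{\frac73}H^{-3}$ balances the two errors to $q^{\frac56}X^{\frac43}H^{-\frac54}X^{\eps}$. Finally, the symmetries $g_{K,K}(x,y;h)=g_{K,K}(y,x;h)$ and $\Lambda_{lr}(x,y)=\Lambda_{lr}(y,x)$ (note $c_k(-h)=c_k(h)$) pair the $l\le-1$ terms with the $l\ge1$ terms, which yields the factor $2$.

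The only point needing genuine care is the derivative hypothesis of Proposition \ref{problemconv} for $\omega_K$. I would check it via the Mellin representation \eqref{propomegaK}, exactly as in Lemma \ref{majomega}~3. There $\Gamma(s)^2$ decays exponentially, so the integral converges even more easily than for $F$, and the constant $C$ is at most that of the $Y$-case. Everything else is a line-by-line transcription.
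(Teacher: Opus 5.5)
Your proposal is correct and matches the paper, which proves Proposition~\ref{EK} by repeating the proof of Proposition~\ref{EY} with $\omega_K$ in place of $\omega_Y$. As in your plan, this uses Lemma~\ref{majomega} for $B=K$, the split at $\Omega=q^{\frac{4}{3}}X^{\frac{7}{3}}H^{-3}$, and the $l\leftrightarrow -l$ symmetry; your observation $c_k(-h)=c_k(h)$ supplies a detail the paper leaves implicit, and note that $\tau(n)\ll n^{\eps}$ is the classical divisor bound rather than Deligne's.
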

We now turn to $E'(X,q,Y,K)$. Recall 
\begin{equation*}
	E'(X,q,Y,K) =\Re\left(\frac{2}{q}\sum_{r\mid q}\frac{1}{q}\sums{h(r)}\sum_{n,m=1}^{\infty}\tau(n)\tau(m)e\left(\frac{\overline{h}(n+m)}{r}\right)\omega_Y\left(\frac{\sqrt{n}}{r}\right) \omega_K\left(\frac{\sqrt{n}}{r}\right)\right).
\end{equation*}
We will prove the following.
\begin{proposition}\label{EYK}
	We have
\begin{equation}
	E'(X,q,Y,K) = \frac{2}{q}\sum_{dr\mid q}\frac{\mu(d)}{d^2r}\sum_{l=1}^{\infty}\int_0^{lr}g_{Y,K}(x,lr-x;dr)\Lambda_{lr}(x,lr-x)\dx+O\left(q^{\frac{5}{6}}X^{\frac{4}{3}}H^{-\frac{5}{4}}X^{\eps}\right).\label{first}
\end{equation}
\end{proposition}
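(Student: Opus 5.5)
We follow the proofs of Propositions \ref{EY} and \ref{EK}, with the additive shift $n-m=lr$ replaced by $n+m=lr$. First we remove the condition $(h,r)=1$ in $\sums{h(r)}$ by Möbius inversion, exactly as in Proposition \ref{Awsplit}. Writing $r=dr'$ and executing the complete sum over $h$ modulo $r'$ turns the exponential $e\bigl(\overline{h}(n+m)/r\bigr)$ into the congruence condition $n+m\equiv 0\,[r']$. Both $\omega_Y(\sqrt{n}/dr')$ and $\omega_K(\sqrt{m}/dr')$ are negligible for $n,m\gg (dr')^2XH^{-2}X^{\eps}$ by Lemma \ref{majomega}~1., so we may truncate the sums there at cost $O(X^{-2026})$. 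This yields
\begin{equation*}
E'(X,q,Y,K)=\frac{2}{q}\sum_{dr\mid q}\frac{\mu(d)}{d^2r}\,\Re\sum_{l\geq 1}\sum_{n+m=lr}\tau(n)\tau(m)\,g_{Y,K}(n,m;dr)+O(X^{-2026}).
\end{equation*}
Only $l\geq 1$ occurs since $n,m\geq 1$. All quantities are real, so $\Re$ may be dropped.

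Next we insert the dyadic partition $\rho(n/N)\rho(m/M)$ with $1\leq M,N\leq h^2XH^{-2}X^{\eps}$, where $h=dr$. Let $\Omega$ be a parameter, and split according to whether $\min(M,N)\leq\Omega$. In the small range we bound trivially, as in Proposition \ref{bornetriviale}. With $m\sim M$ fixed, $n=lr-m$ runs over a progression of modulus $r$ and length $\ll N/r$. Using $\omega_Y,\omega_K\ll h^{3/2}X^{1/4}(\cdot)^{-3/4}$ (Lemma \ref{majomega}~1. with $j=1$) and $\tau\ll X^{\eps}$, we get $\ll h^3X^{1/2}r^{-1}M^{1/4}N^{1/4}X^{\eps}$, as before. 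The same bound holds for the corresponding piece of the integral main term, using \eqref{bornelambda}. Summing gives $O(q^{1/2}X^{3/4}H^{-1/2}\Omega^{1/4}X^{\eps})$. Here Lemma \ref{majomega}~2. for $\omega_K$ would even allow a better bound, but it is not needed.

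In the large range we apply Proposition \ref{problemconv} with $u=\tau$, the plus sign $n+m=lr$, and
\begin{equation*}
g(x,y)=\omega_Y\bigl(\sqrt{x}/h\bigr)\rho(x/N)\,\omega_K\bigl(\sqrt{y}/h\bigr)\rho(y/M).
\end{equation*}
The parameters are $A=h^3X^{1/2}(MN)^{-3/4}$ and $P=XH^{-1}$, from Lemma \ref{majomega}~3., 4. The main term produced is
\begin{equation*}
\int_0^{lr}g(x,lr-x)\Lambda_{lr}(x,lr-x)\dx,
\end{equation*}
and the remainder is $O\bigl(AP^{5/4}(M+N)^{1/4}(MN)^{1/4+\eps}\bigr)$ uniformly in $l$. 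Since $n+m=lr$ with $n,m\leq 2\max(M,N)$, the number of relevant $l$ is $\ll\max(M,N)/r$. Summing over $l$, the dyadic $M,N\geq\Omega$, and $dr\mid q$, exactly as in \eqref{borne2}, gives $O(q^{3/2}X^{5/2}H^{-11/4}\Omega^{-1/2}X^{\eps})$. We then reassemble the integrals over all dyadic pieces using $\sumd{M}\rho=\sumd{N}\rho=1$; the pieces with small $\min(M,N)$ were already shown to be admissible errors. Choosing $\Omega=q^{4/3}X^{7/3}H^{-3}$ balances the two errors into $O(q^{5/6}X^{4/3}H^{-5/4}X^{\eps})$, which gives \eqref{first}.

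The main obstacle is the application of Proposition \ref{problemconv} in the ``$n+m$'' case. There we must check that its proof (via \cite{quadraticdivisorproblem}) handles the sum-shift with the smooth weight $g$ and delivers the main term in the stated form with the integral over $[0,lr]$. We must also check that the derivative bounds of Lemma \ref{majomega}~4. apply jointly to the product of an $\omega_Y$ and an $\omega_K$ factor. Uniformity of the error in $l$ is essential for the count of $l$ to contribute only the factor $\max(M,N)/r$.
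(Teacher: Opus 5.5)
Your proposal is correct and follows the paper's route exactly. The steps are the same: Möbius inversion and execution of the $h$-sum to get $n+m\equiv 0\,[r]$, dyadic splitting, the trivial bound when $\min(M,N)\le\Omega$, Proposition~\ref{problemconv} with the $n+m=lr$ sign otherwise, and $\Omega=q^{4/3}X^{7/3}H^{-3}$; the paper compresses all this into ``by the same arguments as in the proof of Proposition~\ref{EY}''. The two obstacles you flag are not real gaps: Proposition~\ref{problemconv} is stated for both signs $n\mp m=h$, and the mixed derivatives of $g$ factor because $g$ is a product of a function of $x$ and a function of $y$.
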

\begin{preuve}
	Writing
\begin{equation*}
	\sum_{r\mid q}\frac{1}{r^2}\sums{h(r)}=\sum_{dr\mid q}\frac{\mu(d)}{d^2r^2}\sum_{h(r)}
\end{equation*}
and executing the $h$ sum we have
\begin{equation*}
	E'(X,q,Y,K) =\Re\left(\frac{2}{q}\sum_{dr\mid q}\frac{\mu(d)}{d^2r} \sum_{\substack{1\leq n,m\leq d^2r^2XH^{-2}X^{\eps}\\ n+m\equiv 0[r]\phantom{-}}} \hspace{-0.9cm}\tau(n)\tau(m)\omega_Y\left(\frac{\sqrt{n}}{dr}\right)\omega_K\left(\frac{\sqrt{m}}{dr}\right)\right).
\end{equation*}
We use a dyadic partition in both variables $n$ and $m$ and we have 
\begin{equation*}
	E'(X,q,Y,K)=\Re\left(\frac{2}{q}\sum_{dr\mid q}\frac{\mu(d)}{d^2r}\sumd{M,N}\sum_{\substack{n,m\leq (dr)^2XH^{-2}X^{\eps}\\ n+m\equiv 0[r]\phantom{-}}}\hspace{-0.9cm}\tau(n)\tau(m)\omega_Y\left(\frac{\sqrt{m}}{dr}\right) \rho\left(\frac{m}{M}\right)\omega_K\left(\frac{\sqrt{n}}{dr}\right) \rho\left(\frac{n}{N}\right)\right).
\end{equation*}
By the same arguments as in the proof of Proposition \ref{EY} we prove \eqref{first}.
\end{preuve}

\subsection{Fake main terms}
The goal of this section is to bound the three ``fake main terms'' arising from Section \ref{termeerreurtau}. Precisely, we will bound
\begin{equation}\label{FMTB}
	\frac{2}{q}\sum_{dr\mid q}\frac{\mu(d)}{d^2r}\sum_{l=1}^{\infty}\int_{0}^{\infty}g_{B,B}(x,x+lr;dr)\Lambda_{lq}(x,x+lr)\dx,
\end{equation}
for $B\in\{Y,K\}$ and 
\begin{equation}\label{FMTYK}
	\frac{2}{q}\sum_{dr\mid q}\frac{\mu(d)}{d^2r}\sum_{l=1}^{\infty}\int_0^{lr}g_{Y,K}(x,lr-x;dr)\Lambda_{lr}(x,lr-x)\dx.
\end{equation}
We first bound \eqref{FMTB} in the next proposition. 
\begin{proposition}\label{boundFMTB}
	We have for $B\in\{Y,K\}$,
\begin{equation*}
	E:=\frac{1}{q}\sum_{dr\mid q}\frac{\mu(d)}{d^2r}\sum_{l=1}^{\infty}\int_{0}^{\infty}g_{Y,Y}(x,x+lr;dr)\Lambda_{lq}(x,x+lr)\dx\ll q\tau(q)X^{\frac{1}{2}}H^{-\frac{1}{2}}\log(X)^{4+\eps}. 
\end{equation*}
\end{proposition}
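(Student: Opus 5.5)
The plan is a direct, essentially trivial estimate: absolute values everywhere, with the bound \eqref{bornelambda} on $\Lambda$ and the pointwise bound of Lemma \ref{majomega}~1 on $\omega_B$. No cancellation in the $l$-sum is needed. First reduce the range of integration. By Lemma \ref{majomega}~1 with $j$ large (and part 2 for $K$), $\omega_B(\sqrt{x}/h)\ll X^{-2026}$ once $x\gg h^2XH^{-2}X^{\eps}$, where $h=dr$. So the integrand is negligible unless both $x$ and $x+lr$ are $\ll h^2XH^{-2}X^{\eps}$. This restricts $l\ll h^2XH^{-2}X^{\eps}/r$ and $x\ll h^2XH^{-2}X^{\eps}$; the tails contribute $O(X^{-2000})$.

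For the arithmetic factor, I would bound $\Lambda_{lr}(x,x+lr)$ as in \eqref{bornelambda}, but keeping the logarithms explicit. Using $\abs{c_k(lr)}\leq (lr,k)$ and $\sum_{k\leq y}(lr,k)\leq y\tau(lr)$, partial summation gives $\sum_k (lr,k)\log(k)^2/k^2\ll \tau(lr)$. Since $\log x,\log(x+lr)\ll\log X$ in the relevant range, this yields $\Lambda_{lr}(x,x+lr)\ll \tau(lr)\log(X)^2$. To handle $\tau(lr)\leq\tau(l)\tau(r)$ I would sum over $l$ using $\sum_{l\leq L}\tau(l)l^{-3/4}\ll L^{1/4}\log L$. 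This, plus the dyadic sums, is the source of the power $\log(X)^{4+\eps}$.

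For the analytic factor, use Lemma \ref{majomega}~1 with $j=1$: $\omega_B(\sqrt{y}/h)\ll h^{3/2}X^{1/4}y^{-3/4}$. Then
\begin{equation*}
\int_0^{\infty}\abs{g_{B,B}(x,x+lr;h)}\dx\ll h^3X^{\frac12}\int_0^{\infty}x^{-\frac34}(x+lr)^{-\frac34}\dx\ll h^3X^{\frac12}(lr)^{-\frac12}.
\end{equation*}
Near $x=0$ I would instead use the trivial bound $\omega_B\ll H$ (from the definition, since $B_0$ is bounded for large argument and the Mellin representation controls small argument), so the integral converges there. Summing over $l\leq L:=h^2XH^{-2}X^{\eps}/r$ with the weight $\tau(l)$ then gives
\begin{equation*}
\ll h^3X^{\frac12}r^{-\frac12}\tau(r)\log(X)^{3}\sum_{l\leq L}\tau(l)l^{-\frac12},
\end{equation*}
and $\sum_{l\le L}\tau(l)l^{-1/2}\ll L^{1/2}\log L$.

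The main obstacle is that this crude bound produces about $h^4XH^{-1}r^{-1}$, too large by a factor $X^{1/2}H^{-1/2}$ compared with the target. I expect the correct treatment to exploit the exponential decay of $\omega$ beyond $h^2X/H^2$ more carefully, as in Lemma \ref{majomega}~1 with a general $j$. Concretely, I would split $x+lr$ dyadically at scale $Y$ and use $j=1$ when $Y\leq h^2/X$ but $j=0$-type (size $h^{1/2}Y^{-1/4}X^{3/4}$) or oscillation-free bounds for $Y$ larger. The most promising fix to try first is to trade one factor: bound one $\omega$ by the $j=0$ estimate $h^{1/2}y^{-1/4}X^{-1/4}H$ and the other with $j=1$. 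That produces $L^{1/2}$-type growth with $h^{3}XH^{-1/2}$ in place of $h^4XH^{-1}$. After inserting $\frac1q\sum_{dr\mid q}\frac{1}{d^2r}h^{2}(\cdots)$ and summing over $dr\mid q$ (the number of divisor pairs being $\ll\tau(q)\log$), this should give $q\tau(q)X^{1/2}H^{-1/2}\log(X)^{4+\eps}$. Balancing the exponents in this last step is where I expect the real work to be.
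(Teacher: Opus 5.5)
\textbf{There is a genuine gap.} For $B=Y$ (the case actually written in the statement), the quantity cannot be bounded using absolute values. The fix you propose also rests on an estimate that does not hold.

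\emph{Why absolute values cannot work.} Write $h=dr$. To reach $q\tau(q)X^{1/2}H^{-1/2}$ you need, for each pair $(d,r)$ and up to logarithms, $\sum_l\int_0^\infty\abs{g_{Y,Y}(x,x+lr;h)}\,\mathrm{d}x\ll h^4r^{-1}X^{1/2}H^{-1/2}$. However, the $j=1$ bound of Lemma \ref{majomega} is essentially sharp for $\omega_Y$. The cutoff of $w$ near $X$ makes $\omega_Y(\sqrt y/h)$ behave like $h^{3/2}X^{1/4}y^{-3/4}$ times an oscillating factor $\cos(4\pi\sqrt{Xy}/h+c)$ for $h^2/X\ll y\ll h^2XH^{-2}$. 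Hence $\int_0^\infty\abs{\omega_Y(\sqrt y/h)}\,\mathrm{d}y\asymp h^2(X/H)^{1/2}$. The absolute-value majorant is therefore genuinely of order $r^{-1}h^4XH^{-1}$: your trivial bound is not lossy, it is the true size of the majorant.

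\emph{Why the proposed fix fails.} The ``$j=0$'' estimate $h^{1/2}y^{-1/4}X^{-1/4}H$ is not available. Part 1 of Lemma \ref{majomega} holds only for $j\geq 1$, and the correct trivial bound is $h^{1/2}y^{-1/4}X^{3/4}$. The extrapolated formula would be smaller than the sharp $j=1$ size exactly when $y<h^2XH^{-2}$, which is the whole relevant range. Even granting it, using it on one factor and $j=1$ on the other again gives about $h^4XH^{-1}r^{-1}$.

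\textbf{The missing idea is cancellation in the $l$-sum.} You need $\sum_l\sigma_{-1}(lr)\,\omega_Y(\sqrt{x+lr}/h)\ll h^2r^{-1}$ up to logarithms, instead of $r^{-1}\int\abs{\omega_Y}\asymp h^2(X/H)^{1/2}r^{-1}$. The paper obtains this as follows.
\begin{enumerate}
\item Instead of bounding $\Lambda_{lr}$ by $\tau(lr)$, it rewrites $\Lambda_{lr}$ via Lemma \ref{Diricraman} in terms of $\sigma_{-1}(lr)$, $\sigma_{-1}^{(1)}(lr)$ and $\sigma_{-1}^{(2)}(lr)$.
\item It Mellin-inverts $\omega_Y(\sqrt{x+lr}/dr)$ (Proposition \ref{propomega}) and separates $(x+lr)^{-s}$ with the Mellin--Barnes formula \eqref{(A+B)lambda}.
\item It executes the $l$-sum through \eqref{sumsigma_a}, producing $\zeta(-z)\zeta(1-z)h(-1,r,-z)$, and moves the $z$-line past the pole at $z=-1$.
\item The residue term $E_{1,1}$ is the $l$-sum replaced by an integral, essentially
$\frac{\zeta(2)h(-1,r,1)}{r}\int_0^\infty\omega_Y(\sqrt x/dr)\int_x^\infty\omega_Y(\sqrt y/dr)\,\mathrm{d}y\,\mathrm{d}x$.
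The inner integral of the oscillating function is controlled through its Mellin representation on $\Re(s)=1-2\delta$ with $\delta=1/(4\log(X/H))$. It costs only $(dr)^2\log X$.
\item Pointwise bounds (Lemma \ref{majomega} with $j=1,2$) are then applied to a single $\omega_Y$. Its $L^1$-norm, $\ll(dr)^2X^{1/2}H^{-1/2}$, is exactly the factor in the statement.
\item The shifted integral $E_{1,2}$ is bounded with Stirling's formula. The logarithmic weights in $\Lambda$ are handled by \eqref{log(A+B)(A+B)lambda} together with Lemma \ref{Stirling}~4. Finally, $h(q)\ll\log\log q$ absorbs the local factors.
\end{enumerate}
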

\begin{preuve}
We give the details for $B=Y$. The proof is similar for $B=K$. Let 
\begin{equation*}
	\widetilde{E}=\sum_{l=1}^{\infty}\int_{0}^{\infty}g_{Y,Y}(x,x+lr;dr)\Lambda_{lr}(x,x+lr)\dx
\end{equation*}
Recall that
\begin{equation*}
	\Lambda_{lr}(x,y)=\sum_{k=1}^{\infty}\frac{c_k(lr)}{k^2}(\log(x)-2\gamma-2\log(k))(\log(y)-2\gamma-\log(k)).
\end{equation*}
We have by Lemma \ref{Diricraman}
\begin{equation*}
	\sum_{k=1}^{\infty}\frac{c_k(lr)}{k^2}=\frac{1}{\zeta(2)}\sigma_{-1}(lr),\quad -\sum_{k=1}^{\infty}\frac{c_k(lr)}{k^2}\log(k)=-\frac{\zeta'(2)}{\zeta(2)^2}\sigma_{-1}(lr)+\frac{1}{\zeta(2)}\sigma_{-1}^{(1)}(lr) 
\end{equation*}
and 
\begin{equation*}
	\sum_{k=1}^{\infty}\frac{c_k(lr)}{k^2}\log(k)^2=\frac{2\zeta'(2)^2-\zeta''(2)\zeta(2)}{\zeta(2)^3}\sigma_{-1}(lr)-2\frac{\zeta'(2)}{\zeta(2)^2} \sigma_{-1}^{(1)}(lr)+\frac{1}{\zeta(2)} \sigma_{-1}^{(2)}(lr).
\end{equation*}
Hence, we have 
\begin{align}
	\widetilde{E}&=A_1\sum_{l=1}^{\infty}\int_0^{\infty}g_{Y,Y}(x,x+lr;dr)(\log(x)-2\gamma)(\log(x+lr)-2\gamma)\sigma_{-1}(lr)\dx\label{TE1}\\
	&+A_2\sum_{l=1}^{\infty}\int_0^{\infty}g_{Y,Y}(x,x+lr;dr)(\log(x)+\log(x+lr)-4\gamma)\sigma_{-1}(lr)\dx\\
	&+A_3\sum_{l=1}^{\infty}\int_0^{\infty}g_{Y,Y}(x,x+lr;dr)(\log(x)+\log(x+lr)-4\gamma) \sigma_{-1}^{(1)}(lr)\dx\\
	&+\sum_{l=1}^{\infty}\int_0^{\infty}g_{Y,Y}(x,x+lr;dr)\left(A_4\sigma_{-1}(lr)+A_5 \sigma_{-1}^{(1)}(lr) +A_6 \sigma_{-1}^{(2)}(lr)\right)\dx,
\end{align}
for some $A_i\in\R$. We will first analyse in detail \eqref{TE1}. We first start with 
\begin{equation*}
	E_1:=\sum_{l=1}^{\infty}\int_0^{\infty}g_{Y,Y}(x,x+lr;dr)\sigma_{-1}(lr)\dx.
\end{equation*}
Recall the definition of $g_{Y,Y}$
\begin{equation*}
	g_{Y,Y}(x,x+lr;dr)=\omega_Y\left(\frac{\sqrt{x}}{dr}\right)\omega_Y\left(\frac{\sqrt{x+lr}}{dr}\right).
\end{equation*}
We now use \eqref{propomegaY} on $\omega_Y\left(\frac{\sqrt{x+lr}}{dr}\right)$ to write 
\begin{equation}\label{MellinGYY}
	g_{Y,Y}(x,x+lr;dr)=\frac{1}{2i\pi}\int_{(\sigma)}\omega_Y\left(\frac{\sqrt{x}}{dr}\right)G(s)\psi(1-s)(dr)^{2s}(x+lr)^{-s}\ds,
\end{equation}
where $\sigma>1$. 
We then write using \eqref{(A+B)lambda}
\begin{equation}
	(x+lr)^{-s}=\frac{1}{2i\pi\Gamma(s)}\int_{(c)}\Gamma(s+z)\Gamma(-z)x^{-s-z}(lr)^{z}\dz,
\end{equation}
where the line $[c-i\infty;c+i\infty]$ separates the poles of $\Gamma(s+z)$ from those of $\Gamma(-z)$ \textit{i.e.}
\begin{equation*}
	-\sigma<c<0. 
\end{equation*}
We choose $c\in ]-\sigma;-1[$. Combining \eqref{MellinGYY} and \eqref{(A+B)lambda} we have
\begin{equation*}
	E_1=\frac{1}{(2i\pi)^2}\int_0^{\infty}\omega_{Y}\left(\frac{\sqrt{x}}{dr}\right)\int_{(\sigma)}\int_{(c)}G(s)\psi(1-s)(dr)^{2s}r^{z}x^{-s-z}\frac{\Gamma(s+z)\Gamma(-z)}{\Gamma(s)}\sum_{l=1}^{\infty}\frac{\sigma_{-1}(lr)}{l^{-z}}\ds\dz\dx.
\end{equation*}
We have by Lemma \ref{sumsigma_a}
\begin{equation*}
	\sum_{n=1}^{\infty}\frac{\sigma_{-1}(lr)}{l^{-z}}=
	\zeta(-z)\zeta(1-z)h(-1,r,-z).
\end{equation*}
Hence
\begin{align*}
	E_1&=\frac{1}{(2i\pi)^2}\int_0^{\infty}\omega_{Y}\left(\frac{\sqrt{x}}{dr}\right)\int_{(\sigma)}\frac{G(s)\psi(1-s)(dr)^{2s}x^{-s}}{\Gamma(s)}\\
	&\underbrace{\int_{(c)}\Gamma(s+z)\Gamma(-z)r^zx^{-z}\zeta(-z)\zeta(1-z)h(-1,r,-z)\dz}_{I}\ds\dx. 
\end{align*}
We consider the $c$-integral, we shift the line to $c=-1+\delta$ for some $\delta>0$ and pick up the residue of $\zeta(-z)$ at $z=-1$. By the residue theorem we have
\begin{equation*}
	I=2i\pi\Gamma(s-1)r^{-1}x\zeta(2)h(-1,r,1)-\int_{(-1+\delta)}\Gamma(s+z)\Gamma(-z)r^zx^{-z}\zeta(-z)\zeta(1-z)h(-1,r,-z)\dz. 
\end{equation*}
Inserting this into $E_1$, we see that $E_1=E_{1,1}+E_{1,2}$ with 
\begin{align*}
	E_{1,1}&=\frac{\zeta(2)h(-1,r,1)r^{-1}}{2i\pi}\int_0^{\infty}\omega_{Y}\left(\frac{\sqrt{x}}{dr}\right)\int_{(\sigma)}\frac{G(s)\psi(1-s)(dr)^{2s}x^{1-s}}{\Gamma(s)} \Gamma(s-1)\ds\dx\\
	&=\frac{\zeta(2)h(-1,r,1)r^{-1}}{2i\pi} \int_0^{\infty}\omega_{Y}\left(\frac{\sqrt{x}}{dr}\right)\int_{(\sigma)}\frac{G(s)\psi(1-s)(dr)^{2s}x^{1-s}}{s-1}\ds\dx
\end{align*}
and 
\begin{align*}
	E_{1,2}&=\frac{1}{(2i\pi)^2}\int_0^{\infty}\omega_{Y}\left(\frac{\sqrt{x}}{dr}\right)\int_{(\sigma)}\frac{G(s)\psi(1-s)(dr)^{2s}x^{-s}}{\Gamma(s)}\\
	&\int_{(-1+\delta)}\Gamma(s+z)\Gamma(-z)r^zx^{-z}\zeta(-z)\zeta(1-z)h(-1,r,-z)\dz\ds\dx. 
\end{align*}
We shift both contours in $E_{1,1}$ and $E_{1,2}$ to $\sigma=1-2\delta$ and pick up no residue.
Let $s=1-2\delta+it$ and $z=-1+\delta+it'$ then by Lemmas \ref{Stirling} and \ref{majpsi}
\begin{equation*}
	\abs{G(s)}\asymp (1+\abs{t})^{1-4\delta},
\end{equation*} 
\begin{equation*}
	\psi(1-s)\ll \frac{X^{2\delta}}{\sqrt{4\delta^2+t^2}},
\end{equation*}
where both implied constants do not depend on $\delta$. 
Finally,
\begin{align*}
	E_{1,1}&\ll h(r)r^{-1}X^{2\delta}(dr)^{2-4\delta}\\
	&\quad \times \left(\int_0^{(dr)^2XH^{-2}}\frac{(dr)^{\frac{3}{2}}X^{\frac{1}{4}}\dx}{x^{\frac{3}{4}+2\delta}}+\int_{(dr)^2XH^{-2}}^{\infty}\frac{(dr)^{\frac{5}{2}}X^{\frac{3}{4}}H^{-1}\dx}{x^{\frac{5}{4}-2\delta}}\right)\int_{\abs{t}}\frac{(1+\abs{t})^{1-4\delta}}{4\delta^2+t^2}\dt\\
	&\ll h(r)r^{-1}(dr)^4X^{\frac{1}{2}}H^{-\frac{1}{2}}\left(\frac{X}{H}\right)^{4\delta}\int_{\abs{t}}\frac{(1+\abs{t})^{1-4\delta}}{4\delta^2+t^2}\dt,
	%&\ll qX^{\frac{1}{2}}H^{-\frac{1}{2}}\left(\frac{X}{H}\right)^{4\delta}\int_{\abs{t}}\frac{(1+\abs{t})^{1-4\delta}}{4\delta^2+t^2}\dt.
\end{align*}
with 
\begin{equation*}
	h(r)=\prod_{p\mid r}\left(1+\frac{1}{p-1}\right).
\end{equation*}
Here we used Lemma \ref{majomega} 1. with $j=1$ for $x\leq (dr)^2XH^{-2}$ and $j=2$ for $(dr)^2XH^{-2}\leq x$.\smallbreak
\noindent
We choose 
\begin{equation*}
	\delta=\frac{1}{4\log\left(\frac{X}{H}\right)}
\end{equation*} 
hence $\left(\frac{X}{H}\right)^{4\delta}\ll 1$ and it remains to estimate the $t-$integral. We have 
\begin{equation*}
	\int_{\abs{t}}\frac{(1+\abs{t})^{1-4\delta}}{(4\delta^2+t^2)^{\frac{1}{2}}}\dt\ll \frac{1}{\delta^2}\int_{\abs{t}\ll \delta}(1+\abs{t})^{1-4\delta}\dt+\int_{\abs{t}\gg \delta}\frac{(1+\abs{t})^{1-4\delta}}{t^2}\dt\ll \log(X),
\end{equation*}
and finally 
\begin{equation*}
	E_{1,1}\ll h(r)r^{-1}(dr)^4X^{\frac{1}{2}}H^{-\frac{1}{2}}\log(X).
\end{equation*}
We do exactly the same analysis with $E_{1,2}$ and end up with 
\begin{align*}
	E_{1,2}&\ll h(r)r^{-1}(dr)^4X^{\frac{1}{2}}H^{-\frac{1}{2}}\left(\frac{X}{H}\right)^{3\delta}\\
	&\quad \times \iint (1+\abs{t})^{1-4\delta}\frac{1}{(4\delta^2+t^2)^{\frac{1}{2}}}\frac{\abs{\Gamma(-\delta+it+it')}\abs{\Gamma(1-\delta-it')}}{\abs{\Gamma(1-2\delta+it)}}\abs{t'}^{\delta}\dt\dt',
\end{align*}
where we used
\begin{equation*}
	\zeta(1-z)\ll 1,\quad \zeta(-z)\ll \abs{t'}^{\delta},
\end{equation*}
and by \eqref{h(a,r,s)}
\begin{equation*}
	h(-1,r,-z)\ll h(r).
\end{equation*}
We will show the double integral, $I'$ (say), is $\ll \log(X)^{2+\eps}$ for any $\eps>0$. By Lemma \ref{Stirling} we have
\begin{equation*}
	I'\ll \iint \frac{(1+\abs{t})^{\frac{1}{2}-2\delta}\abs{t'}^{\frac{1}{2}}}{(4\delta^2+t^2)^{\frac{1}{2}}(1+\abs{t+t'})^{\frac{1}{2}+\delta}}e^{-\frac{\pi}{2}\abs{t+t'}}e^{-\frac{\pi}{2}\abs{t'}}e^{\frac{\pi}{2}\abs{t}}\dt\dt',
\end{equation*}
where the implied constant does not depend on $\delta$. 
We consider the integral over $t'$, which is, uniformly in $\delta$,  
\begin{equation*}
	\ll \int \abs{t'}^{\frac{1}{2}}e^{-\frac{\pi}{2}\abs{t'}}\dt',
\end{equation*} 
hence we can cut the integral at $\abs{t'}\ll \log(X)^{1+\eps}$ at a negligible cost. Finally we have 
\begin{align*}
	I'&\ll \frac{1}{\delta}\iint_{\abs{t}\ll \delta}\dt\dt'+ \iint_{\delta\ll\abs{t}\ll \log(X)^{1+\eps}}\frac{\abs{t}^{\frac{1}{2}}}{(1+\abs{t})^{\frac{1}{2}+2\delta}}\dt\dt'+ \iint_{\log(X)^{1+\eps}\ll\abs{t}}\frac{\abs{t'}^{\frac{1}{2}}e^{-\frac{\pi}{2}\abs{t'}}}{(1+\abs{t})^{1+2\delta}}\dt\dt'\\
	&\ll \log(X)^{1+\eps}+\log(X)^{2+\eps}+\log(\log(X))\\
	&\ll \log(X)^{2+\eps}.
\end{align*}
Finally we have for any $\eps>0$
\begin{equation*}
	E_1\ll h(r)r^{-1}(dr)^4X^{\frac{1}{2}}H^{-\frac{1}{2}}\log(X)^{2+\eps}
\end{equation*}
and 
\begin{align*}
	\frac{1}{q}\sum_{dr\mid q}\frac{1}{d^2r}E_1&\ll \frac{1}{q}\sum_{dr\mid q}h(r)d(dr)^2X^{\frac{1}{2}}H^{-\frac{1}{2}}\log(X)^{2+\eps}\\
	&\ll qh(q)\tau(q)X^{\frac{1}{2}}H^{-\frac{1}{2}}\log(X)^{2+\eps}\\
	&\ll q\tau(q)X^{\frac{1}{2}}H^{-\frac{1}{2}}\log(X)^{2+\eps},
\end{align*}
where we used Lemma \ref{borneh(n))}.\smallbreak
\noindent 
If we were to estimate 
\begin{equation*}
	\sum_{l=1}^{\infty}\int_0^{\infty}g_{Y,Y}(x,x+lr;dr)\log(x+lr)\sigma_{-1}(lr)\dx,
\end{equation*}
we would carry out the same analysis as for $E_1$ but instead of using \eqref{(A+B)lambda} we would use \eqref{log(A+B)(A+B)lambda}
\begin{align*}
	\log(x+lr)(x+lr)^{-s}&=\frac{\Gamma'(s)}{2i\pi\Gamma(s)^2}\int_{(c)}\Gamma(s+z)\Gamma(-z)x^{-s-z}(lr)^{z}\dz\\
	&-\frac{1}{2i\pi\Gamma(s)}\int_{(c)}\Gamma'(s+z)\Gamma(-z)x^{-s-z}(lr)^{z}\dz\\
	&+\frac{1}{2i\pi\Gamma(s)}\int_{(c)}\Gamma(s+z)\Gamma(-z)\log(x)x^{-s-z}(lr)^{z}\dz.
\end{align*}
It does not significantly change the details involved because 
\begin{equation*}
	\abs{\Gamma'(\sigma+it)}\asymp \abs{\log(t)}\abs{\Gamma(\sigma+it)},
\end{equation*} 
by Lemma \ref{Stirling}. Finally, if we replace $\sigma_{-1}(lr)$ by $\sigma_{-1}^{(1)}(lr)$ or $\sigma_{-1}^{(2)}(lr)$, here again, it does not significantly change the details involved since
\begin{equation*}
	\sum_{l=1}^{\infty}\frac{\sigma_{-1}^{(1)}(lr)}{l^{-z}}=-\zeta(-z)\zeta'(1-z)h(-1,r,-z)+\zeta(-z)\zeta(1-z)\left.\frac{\partial}{\partial a}h(a,r,-z)\right\rvert_{a=-1}
\end{equation*}
and 
\begin{align*}
	\sum_{l=1}^{\infty} \frac{\sigma_{-1}^{(2)}(lr)}{l^{-z}}&=\zeta(-z)\zeta''(1-z)h(-1,r,-z)-2\zeta(-z)\zeta'(1-z) \left.\frac{\partial}{\partial a}h(a,r,-z)\right\rvert_{a=-1}\\
	&+\zeta(-z)\zeta(1-z)\left.\frac{\partial^2}{\partial a^2}h(a,r,-z)\right\rvert_{a=-1}
\end{align*}
The differences are either $h(-1,r,z)$ being replaced by one of its derivatives, or the term $\zeta(1 - z)$ being replaced by $\zeta'(1 - z)$ or $\zeta''(1 - z)$. The first difference is negligible thanks to \eqref{h(a,r,s)} while the second one is negligible because we always have $\Re(1-z)>1$.  
\end{preuve}
We now bound \eqref{FMTYK} in the next proposition. 
\begin{proposition}
We have
\begin{align*}
	E&:=\frac{2}{q}\sum_{dr\mid q}\frac{\mu(d)}{d^2r}\sum_{l=1}^{\infty}\int_0^{lr}g_{Y,K}(x,lr-x;dr)\Lambda_{lr}(x,lr-x)\dx\\
	&\ll qX^{\frac{1}{2}}H^{-\frac{1}{2}}\tau(q)\log(X)^{3+\eps}+qX^{\frac{3}{4}}H^{-\frac{3}{4}}\tau(q)\log(X)^{\frac{5}{2}+\eps}.
\end{align*}
\end{proposition}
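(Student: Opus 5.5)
We follow the proof of Proposition \ref{boundFMTB} closely; the new feature is that the integral over $x$ runs only over $[0,lr]$ and the second argument is $lr-x$ rather than $x+lr$.

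\textbf{Expanding $\Lambda_{lr}$.} Using Lemma \ref{Diricraman} we first expand $\Lambda_{lr}(x,lr-x)$ as a linear combination of products of
\begin{equation*}
	\sigma_{-1}^{(j)}(lr),\quad j\in\{0,1,2\},
\end{equation*}
with $\log(x)^{a}\log(lr-x)^{b}$, $a,b\in\{0,1\}$. It suffices to treat the model term
\begin{equation*}
	E_1=\sum_{l\geq 1}\sigma_{-1}(lr)\int_0^{lr}\omega_Y\left(\frac{\sqrt{x}}{dr}\right)\omega_K\left(\frac{\sqrt{lr-x}}{dr}\right)\dx .
\end{equation*}
The logarithmic factors and the derivatives $\sigma_{-1}^{(1)},\sigma_{-1}^{(2)}$ are then handled exactly as at the end of Proposition \ref{boundFMTB}. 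We use \eqref{log(A+B)(A+B)lambda} in place of \eqref{(A+B)lambda}, together with Lemma \ref{Stirling} 4 and the bound \eqref{h(a,r,s)}. Each such factor should cost at most an extra $\log(X)$, which accounts for the exponent $3+\eps$ instead of $2+\eps$.

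\textbf{Separating the variables.} We substitute $x=lr\,u$ with $u\in[0,1]$. Then we write $\omega_Y$ and $\omega_K$ as inverse Mellin integrals via Proposition \ref{propomega}, in variables $s$ and $s'$. This gives
\begin{equation*}
	E_1=\frac{1}{(2i\pi)^2}\iint G(s)H(s')\psi(1-s)\psi(1-s')(dr)^{2s+2s'}r^{1-s-s'}\,\mathrm{B}(1-s,1-s')\sum_{l}\frac{\sigma_{-1}(lr)}{l^{s+s'-1}}\ds\ds',
\end{equation*}
where $\mathrm{B}$ is the Beta function,
\begin{equation*}
	\mathrm{B}(1-s,1-s')=\int_0^1u^{-s}(1-u)^{-s'}\,\mathrm{d}u=\frac{\Gamma(1-s)\Gamma(1-s')}{\Gamma(2-s-s')}.
\end{equation*}
By Lemma \ref{sumsigma_a}, the $l$-sum equals
\begin{equation*}
	\zeta(w-1)\,\zeta(w)\,h(-1,r,w-1),\qquad w=s+s'.
\end{equation*}
To make the Beta integral converge we start with $\Re s,\Re s'<1$, and we need $\Re(s+s')>2$ for the Dirichlet series; for instance $\Re s=\Re s'=1-\eta$ is not enough. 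Instead we begin with $\omega_K$ and use its exponential decay (Lemma \ref{majomega} 2), which allows $\Re s'$ to be taken large. The variable $s$ stays below $1$ because of $u^{-s}$; to recover convergence we first truncate $l$ smoothly.

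\textbf{Shifting contours.} We move the $w$-line leftwards, to $\Re(s+s')=2-\delta'$. We pick up the pole of $\zeta(w-1)$ at $w=2$, with residue
\begin{equation*}
	\zeta(2)h(-1,r,1)r^{-1}\times(\text{a single integral}).
\end{equation*}
The shifted integral is bounded by Stirling's formula, Lemma \ref{majpsi} with $\delta\asymp1/\log(X/H)$, and $h(-1,r,\cdot)\ll h(r)$. The residue term is then estimated as $E_{1,1}$ was, giving a contribution $\ll h(r)r^{-1}(dr)^4X^{1/2}H^{-1/2}\log(X)^{2+\eps}$. Summing over $dr\mid q$ with weight $q^{-1}(d^2r)^{-1}$, and using Lemma \ref{borneh(n))}, yields the first term $qX^{1/2}H^{-1/2}\tau(q)\log(X)^{3+\eps}$.

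\textbf{Main obstacle.} The second term, $qX^{3/4}H^{-3/4}$, comes from the region where $lr-x$ is small. There $\omega_K$ is not yet decaying and Lemma \ref{majomega} 2 only gives $\omega_K\ll (dr)^{1/2}y^{-1/4}X^{3/4}$. Bounding directly in this range, where $y=lr-x\ll (dr)^2/H$, and combining with $\omega_Y\ll (dr)^{3/2}X^{1/4}x^{-3/4}$ (Lemma \ref{majomega} 1 with $j=1$), should produce the $X^{3/4}H^{-3/4}$ term with the weaker power $\log(X)^{5/2+\eps}$. Justifying the interchanges and the convergence of the Beta–Mellin representation near $u=1$ is the delicate point. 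I would split the $x$-integral at $lr-x=(dr)^2H^{-1}$: the Mellin argument handles the complement, and the crude pointwise bounds handle the short range.
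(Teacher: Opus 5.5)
\textbf{The Mellin--Beta step has no valid starting point.} Your plan assigns the term $qX^{\frac{1}{2}}H^{-\frac{1}{2}}\tau(q)\log(X)^{3+\eps}$ to this step, but it never gets going. To swap the $x$-integral with the $s,s'$-integrals you need $x^{-s}(lr-x)^{-s'}$ to be absolutely integrable on $[0,lr]$, that is $\Re s<1$ and $\Re s'<1$. The series $\sum_l\sigma_{-1}(lr)l^{1-s-s'}$ converges only for $\Re(s+s')>2$. So your formula for $E_1$ holds on no pair of lines, and there is nothing to shift to $\Re(s+s')=2-\delta'$.

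None of your proposed repairs work:
\begin{itemize}
\item The exponential decay of $\omega_K$ in Lemma \ref{majomega}~2 concerns large $lr-x$. The obstruction to $\Re s'\geq 1$ is the non-integrability of $(lr-x)^{-s'}$ as $lr-x\to 0$. Once you have inserted the Mellin integral of $\omega_K$, no property of $\omega_K$ can cure that.
\item A smooth truncation in $l$ destroys the factorisation $\zeta(w-1)\zeta(w)h(-1,r,w-1)$, and with it the pole at $w=2$.
\item Cutting the $x$-integral at the $l$-dependent point $lr-x=(dr)^2H^{-1}$ turns $\mathrm{B}(1-s,1-s')$ into an incomplete Beta integral that depends on $l$. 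The $l$-sum then no longer factorises either. A genuine region of absolute convergence would need a fixed partition of unity in $u$, independent of $l$.
\end{itemize}
Finally, you only assert the bounds for the residue and the shifted integral by analogy with $E_{1,1}$, whose integrand has a different shape. So the first term is not proved.

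\textbf{No Mellin analysis is needed.} By Lemma \ref{majomega}~2, $\omega_K(\sqrt{y}/(dr))$ is negligible once $y\geq (dr)^2H^{-1}\log(X)^{2+\eps}$. Put your cut there rather than at $y=(dr)^2H^{-1}$, where the exponential factor is still $e^{-1}$. Then the complement is negligible, and the pointwise bounds you reserve for the short range do all the work. This is exactly the paper's proof:
\begin{itemize}
\item First use $\sigma_{-1}(lr)\leq\sigma_{-1}(l)\sigma_{-1}(r)$ and the change of variable $x\leftrightarrow lr-x$.
\item For $l\leq L=(dr)^2r^{-1}H^{-1}\log(X)^{2+\eps}$ the whole interval is short. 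Lemma \ref{majomega}~1 with $j=1$ on both factors gives $\int_0^{lr}\ll (dr)^3X^{\frac{1}{2}}(lr)^{-\frac{1}{2}}$. Summing $\sigma_{-1}(l)l^{-\frac{1}{2}}$ over $l\leq L$ gives the $X^{\frac{1}{2}}H^{-\frac{1}{2}}$ term. So this term comes from small $l$, not from a residue at $s+s'=2$.
\item For $L\leq l\leq (dr)^2r^{-1}XH^{-2}$, keep only the range where the argument of $\omega_K$ is at most $Lr$. This gives $(dr)^3X^{\frac{1}{2}}(lr)^{-\frac{3}{4}}(Lr)^{\frac{1}{4}}$, whose $l$-sum yields the $X^{\frac{3}{4}}H^{-\frac{3}{4}}$ term.
\item The remaining $l$ are handled the same way with $j=2$ for $\omega_Y$.
\end{itemize}
In the short range you should also bound $\omega_K$ by Lemma \ref{majomega}~1 with $j=1$. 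The bound $(dr)^{\frac{1}{2}}y^{-\frac{1}{4}}X^{\frac{3}{4}}$ from Lemma \ref{majomega}~2 that you quote loses a factor $(X/H)^{\frac{1}{2}}$ (up to logarithms). It would lead to a term of size $qX^{\frac{5}{4}}H^{-\frac{5}{4}}$ instead of $qX^{\frac{3}{4}}H^{-\frac{3}{4}}$.
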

\begin{preuve}
Similarly to Proposition \ref{boundFMTB}, we need to bound 
\begin{equation*}
	\widetilde{E}=\frac{2}{q}\sum_{dr\mid q}\frac{\mu(d)}{d^2r}\sum_{l=1}^{\infty}\int_0^{lr}\omega_Y\left(\frac{\sqrt{x}}{dr}\right)\omega_K\left(\frac{\sqrt{lr-x}}{dr}\right)\sigma_{-1}(lr)\dx.
\end{equation*}
We use $\sigma_{-1}(lr)\leq \sigma_{-1}(l)\sigma_{-1}(r)$ and the change of variable $x\longleftrightarrow lr-x$ to get 
\begin{equation*}
	\widetilde{E}\ll q^{-1}\sum_{dr\mid q}\frac{\sigma_{-1}(r)}{d^2r}\sum_{l=1}^{\infty}\sigma_{-1}(l)\int_0^{lr}\omega_Y\left(\frac{\sqrt{lr-x}}{dr}\right) \omega_K\left(\frac{\sqrt{x}}{dr}\right).
\end{equation*}
The $l$-sum can be cut at $l\leq (dr)^2r^{-1}XH^{-2}X^{\eps}$ at a negligible cost. We will distinguish three cases, namely  
\begin{align*}
	1\leq &\, l\leq L:=(dr)^2r^{-1}\log(X)^{2+\eps}H^{-1},\\
	L\leq &\, l\leq (dr)^2r^{-1}XH^{-2},\\
	(dr)^2lr^{-1}XH^{-2}\leq &\, l\leq (dr)^2lr^{-1}XH^{-2}X^{\eps} 
\end{align*}
and denote by $\widetilde{E}_1, \widetilde{E}_2$ and $\widetilde{E}_3$ the corresponding value of $\widetilde{E}$.\smallbreak
\noindent   
We start with the case 	$l\leq L$. We use 1. of Lemma \ref{majomega} with $j=1$ on both $\omega_Y$ and $\omega_K$ and get 
\begin{equation*}
	\int_0^{lr}\omega_Y\left(\frac{\sqrt{lr-x}}{dr}\right) \omega_K\left(\frac{\sqrt{x}}{dr}\right)\dx\ll (dr)^3X^{\frac{1}{2}}\int_0^{lr}x^{-\frac{3}{4}}(lr-x)^{-\frac{3}{4}}\dx\ll (dr)^3X^{\frac{1}{2}}(lr)^{-\frac{1}{2}}. 
\end{equation*}
Finally,
\begin{equation*}
	\widetilde{E}_1\ll q^{-1}X^{\frac{1}{2}}\sum_{dr\mid q}\frac{\sigma_{-1}(r)(dr)^3r^{-\frac{1}{2}}}{d^2r}\sum_{l\leq L}\frac{\sigma_{-1}(l)}{l^{\frac{1}{2}}}\ll qX^{\frac{1}{2}}H^{-\frac{1}{2}}\log(X)^{1+\eps}\sum_{r\mid q}\frac{\sigma(r)}{r},
\end{equation*}
where we used
\begin{equation*}
	\sum_{n\leq x}\frac{\sigma_1(n)}{n^{\frac{1}{2}}}=\sum_{n\leq x}\frac{\sigma(n)}{n^{\frac{3}{2}}}\ll x^{\frac{1}{2}},
\end{equation*}
see \cite{Tenenbaum}*{Theorem~3.3}.\smallbreak
\noindent
We now turn to the second case, by 2. of Lemma \ref{majomega} we can can cut the integral $\int_0^{lr}\dx$ at $x=L$ at a negligible cost. We then use Lemma \ref{majomega}~1. with $j=1$ on both $\omega_Y$ and $\omega_K$ and get 
\begin{equation*}
	\int_0^L\omega_Y\left(\frac{\sqrt{x}}{dr}\right)\omega_{K}\left(\frac{\sqrt{lr-x}}{dr}\right)\dx\ll (dr)^3X^{\frac{1}{2}}\int_0^Lx^{-\frac{3}{4}}(lr-x)^{-\frac{3}{4}}\dx\ll (dr)^3X^{\frac{1}{2}}(lr)^{-\frac{3}{4}}L^{\frac{1}{4}}. 
\end{equation*} 
Finally,
\begin{equation*}
	\widetilde{E}_2\ll q^{-1}X^{\frac{1}{2}}\sum_{dr\mid q}\frac{\sigma_{-1}(r)(dr)^{\frac{7}{2}}H^{-\frac{1}{4}}\log(X)^{\frac{1}{2}+\eps}}{d^2r^2}\hspace{-0.6cm}\sum_{L\leq l\leq (dr)^2r^{-1}XH^{-2}}\frac{\sigma_{-1}(l)}{l^{\frac{3}{4}}}\ll qX^{\frac{3}{4}}H^{-\frac{3}{4}}\log(X)^{\frac{1}{2}+\eps}\sum_{r\mid q}\frac{\sigma(r)}{r^{\frac{5}{4}}}.
\end{equation*}
The last case is very similar to the second case. The only difference is that we use 1. of Lemma \ref{majomega} with $j=2$ on $\omega_Y$ and we get the same error term.  
\end{preuve}

\subsection{Final computations}
The goal of this section is to prove Theorem \ref{thdivisor}.\smallbreak
\noindent
\begin{preuve}
We use \eqref{Aw=(1)+(2)+(3)}, Propositions \ref{TPY,K},  \ref{EY}, \ref{EK}, \ref{EYK}, \ref{FMTB} and \ref{FMTYK} to write 
\begin{align*}
	A_w(q;\tau)&=\frac{X}{q}\sum_{r\mid q}\phi(r)\widetilde{Q}\left(\frac{r^2}{X}\right)\\	&+O\left(qX^{\frac{1}{2}}H^{-\frac{1}{2}}\tau(q)\log(X)^{4+\eps}+X^{\frac{3}{4}}H^{\frac{1}{4}}\log(X)^3+ q^{\frac{1}{4}}X^{\frac{3}{4}}\tau(q)\log(X)^{\frac{5}{2}+\eps}\right)\\
	&+O\left(q^{-1}X^{\frac{3}{2}}g(q)+q^{\frac{5}{6}}X^{\frac{4}{3}}H^{-\frac{5}{4}}X^{\eps}\right),
\end{align*}
where $\widetilde{Q}=\widetilde{Q_Y}+\widetilde{Q_K}$ is a polynomial of degree $3$ with positive leading coefficient.\smallbreak
\noindent 
Inserting this expression into \eqref{splitA(X,q,tau)}, the error term will be
\begin{align*}
	&\ll X^{\frac{1}{2}}Hq^{-\frac{1}{2}}\log(X)^{\frac{3}{2}}\tau(q)^{\frac{1}{2}}+H^2q^{-1}\log(X)^3\tau(q)+qX^{\frac{1}{2}}H^{-\frac{1}{2}}\tau(q)\log(X)^{4+\eps}+X^{\frac{3}{4}}H^{\frac{1}{4}}\log(X)^3\\
	&\quad + q^{\frac{1}{4}}X^{\frac{3}{4}}\tau(q)\log(X)^{\frac{5}{2}+\eps}+ q^{-1}X^{\frac{3}{2}}g(q) + q^{\frac{5}{6}}X^{\frac{4}{3}}H^{-\frac{5}{4}}X^{\eps}.
\end{align*}
We choose
\begin{equation*}
	H=\frac{1}{3}\max(q^{\frac{5}{9}}X^{\frac{7}{18}},q).
\end{equation*}
For $q\leq X^{\frac{7}{8}}$,
the error term is
\begin{align*}
	&\ll q^{\frac{1}{18}}X^{\frac{8}{9}}\log(X)^{\frac{3}{2}} \tau(q)^{\frac{1}{2}} + q^{\frac{1}{9}}X^{\frac{7}{9}}\log(X)^3\tau(q)+q^{\frac{13}{18}}X^{\frac{11}{36}}\tau(q)\log(X)^{4+\eps}+q^{\frac{5}{36}}X^{\frac{61}{72}}\log(X)^3\\
	&\quad +q^{\frac{1}{4}}X^{\frac{3}{4}}\tau(q)\log(X)^{\frac{5}{2}+\eps} +q^{-1}X^{\frac{3}{2}}g(q)+q^{\frac{5}{36}}X^{\frac{61}{72}}X^{\eps}
\end{align*}
which simplifies into
\begin{equation*}
	\ll q^{\frac{5}{36}}X^{\frac{61}{72}}X^{\eps}+q^{-1}X^{\frac{3}{2}}g(q).
\end{equation*}
For $q\geq X^{\frac{7}{8}}$ the error term is
\begin{align*}
	&\ll q^{\frac{1}{2}}X^{\frac{1}{2}}\log(X)^{\frac{3}{2}}\tau(q)^{\frac{1}{2}}+q^{\frac{1}{2}}X^{\frac{1}{2}}\tau(q)\log(X)^{4+\eps}+q^{\frac{1}{4}}X^{\frac{3}{4}}\log(X)^3\\
	&\quad +q^{\frac{1}{4}}X^{\frac{3}{4}}\tau(q)\log(X)^{\frac{5}{2}+\eps} +q^{-1}X^{\frac{3}{2}}g(q)+q^{-\frac{5}{12}}X^{\frac{4}{3}}X^{\eps}
\end{align*} 
which simplifies into
\begin{equation*}
	\ll q^{\frac{1}{2}}X^{\frac{1}{2}}\tau(q)\log(X)^{4+\eps} +q^{\frac{1}{4}}X^{\frac{3}{4}}(\log(X)^3+\log(X)^{\frac{5}{2}+\eps}\tau(q)), 
\end{equation*}
which ends the proof. 
\end{preuve}

\end{document}